\theoremstyle{plain}
\newtheorem{thm}{Theorem}[section]
\newtheorem{theorem}[thm]{Theorem}
\newtheorem{lemma}[thm]{Lemma}
\newtheorem{corollary}[thm]{Corollary}
\newtheorem{proposition}[thm]{Proposition}
\newtheorem{definition}[thm]{Definition}
\theoremstyle{remark}
\newtheorem{example}[thm]{Example}
\newtheorem{defn-thm}[thm]{Definition-Theorem}
\renewcommand{\bar}{\overline}
\renewcommand{\phi}{\varphi}
\newcommand{\C}{{\mathbb C}}
\newcommand{\M}{{\mathcal M}}
\newcommand{\T}{{\mathcal T}}
\renewcommand{\tilde}{\widetilde}
\newcommand{\g}{{\mathfrak g}}
\newcommand{\EQ}{{\ \Longleftrightarrow \ }}
\def\Y{X}
\def\P{\Phi}
\def\TT{\mathcal {T}_m^H}
\def\C{\mathbb{C}}
\def\a{\mathfrak a}
\def\d{\partial}
\def\i{\sqrt{-1}}
\def\T{\mathcal{T}}
\def\X{\mathfrak{X}}
\def\M{\mathcal {M}}
\def\Z{\mathcal {Z}_m}
\def\sZ{\mathcal {Z}}
\def\ZZ{\mathcal {Z}_m^H}
\def\U{\mathcal {U}}
\def\V{\mathcal {V}}
\title{From local Torelli to global Torelli}
\author{Kefeng Liu, Yang Shen}
\begin{document}
\date{}
\maketitle

\vspace{-20pt}

\begin{abstract}
We introduce the notions of strong local Torelli and T-class for polarized manifolds,
and prove that strong local Torelli implies global Torelli theorem on the Torelli spaces for polarized manifolds in the T-class.
  We discuss many new examples of projective manifolds for which such global Torelli theorem holds. As applications we prove that,
  in these cases, a canonical completion  of the Torelli space is a bounded pseudoconvex domain in complex Euclidean space, and show that
   the global Torelli theorem holds on moduli space with certain level structure.
\end{abstract}

\parskip=5pt
\baselineskip=15pt





\setcounter{section}{-1}
\section{Introduction}
Let $(X,L)$ be a polarized manifold with $X$ a projective manifold and $L$ an ample line bundle on $X$. Let $\M$ denote the moduli  space of polarized manifolds, or certain
smooth cover of the moduli space of polarized manifolds, which contains $(X,L)$. Consider the period map
$$\Phi_{\M}:\, \M\to D/\Gamma,$$
from the moduli space $\M$ to the period domain $D$, modulo the action of the monodromy group $\Gamma$.

Among the central problems in Hodge theory are the local Torelli problem
and the global Torelli problem. As discussed in \cite{Griffiths4}, the
local Torelli problem is the question of deciding when the Hodge
structure on $\text{H}^{n}(X_{p},\C)$ separates points in any
local neighborhood in $\M$. In case that $\M$ is smooth, the local Torelli problem is
equivalent to that the tangent map of $\Phi_{\M}$ is injective at any point in $\M$, which is also called infinitesimal Torelli problem.


The global Torelli problem is the question about
when the Hodge structure on $\text{H}^{n}(X_{p},\C)$ uniquely characterizes
 the polarized algebraic structure on $X_{p}$, which is
equivalent to the question that when the period map $\Phi_{\M}$ is
globally injective.

The infinitesimal Torelli problem is solved for a large class of compact
complex manifolds, for examples, algebraic curves $X$, if $g(X)=2$
or if $g(X)>2$ and $X$ is not hyper-elliptic, K3-Surfaces,
Calabi-Yau manifolds, hyperk\"ahler manifolds, hypersurfaces and
complete intersections in $\C \text{P}^{n}$ with a few exceptions.
We refer the reader to Chapter VIII in \cite{Grif84} for a detailed survey.
Moreover, many criteria have been found to decide when the infinitesimal Torelli
theorem holds.

The global Torelli problem is solved only for
several special cases, and there seems to be no systematic methods
to study the global Torelli problem. In many cases, only generic Torelli type theorem
 can be proved, which means that the period
map is injective on some open and dense subset of
$\M$. From our work in this paper, one will see that this difficulty is mostly due to the
complicatedness of the moduli space and the monodromy group
$\Gamma$.

To avoid such difficulties in proving the global Torelli theorem, we lift the period map to the universal cover of the moduli space,  which we call the Teichm\"uller space, and
consider the lifted period map. Furthermore, we consider the moduli space of polarized manifolds with level $m$ structure, as well as the Torelli space which is an irreducible
component of the moduli space of marked and polarized manifolds.  We notice that Torelli space is also called the moduli space of framed polarized manifolds in the literature,
for example \cite{Beau}.

More precisely, let $(X,L)$ be a polarized manifold. For simplicity we also denote by $L$ its first Chern
class. We fix a lattice $\Lambda$ with a pairing
$Q_{0}$, where $\Lambda$ is isomorphic to
$H^n(X_{0},\mathbb{Z})/\text{Tor}$ for some polarized manifold $X_{0}$ in $\M$, and
$Q_{0}$ is defined by the cup-product. For a polarized manifold
$(X,L)\in \M$, we define a marking $\gamma$ as an isometry of the lattices
\begin{equation}\label{marking}
\gamma :\, (\Lambda, Q_{0})\to (H^n(X,\mathbb{Z})/\text{Tor},Q).
\end{equation}

Let $m$ be any integer $\geq 3$.
We follow the definition of Szendr\"oi \cite{sz}
to define an $m$-equivalent relation of two markings on $(X,L)$ by
$$\gamma\sim_{m} \gamma' \text{ if and only if } \gamma'\circ \gamma^{-1}-\text{Id}\in m \cdot\text{End}(H^n(X,\mathbb{Z})/\text{Tor}),$$
and denote by $[\gamma]_{m}$  the set of all the $m$-equivalent
classes of $\gamma$. Then we call $[\gamma]_{m}$ a level $m$
structure on the polarized manifold $(X,L)$.

Let $\mathscr{L}_{m}$ be the moduli space of polarized algebraic manifolds with level $m$ structure that contains $(X,L)$. We then introduce the notion of {\em T-class}, where
the letter T stands for Torelli. The purpose of introducing T-class is for us to work on smooth coverings of the moduli spaces.

\begin{definition}\label{intr T-class}
We say that the polarized manifold $(X,L)$ belongs to T-class if there exists an integer $m_{0}\ge 3$ such that the irreducible component $\Z$ of $\mathscr{L}_{m}$ containing $(X,L)$ is a complex manifold, on which there is an analytic family $f_{m}:\,\U_{m}\to \Z$ for any $m\ge m_{0}$.
\end{definition}
Clearly we can assume that $m_{0}=3$ without loss of generality.

Let $\T$ be the universal cover of
$\Z$ with the pull-back family $g_{m}:\, \U\to \T$. It will be proved
that, in our setting, $\T$ is independent of the choice
of $m$. See Lemma \ref{independent of m}. We will call $\T$ the
Teichm\"uller space of the  polarized manifold $(X, L)$.

Let $\T'$ be an irreducible component of the moduli space of marked and polarized manifolds containing $(X, L)$ in the T-class, which we will call the Torelli space in this
paper. We will see that  $\T'$ is a connected complex manifold which is a covering space of  $\Z$ for each $m\geq 3$.  Therefore as the universal cover of $\Z$, the
Teichm\"uller space $\T$ is also a universal cover of $\T'$. See Section \ref{Hodge-Teich} for detailed discussions about moduli and Torelli spaces.

There are many examples
of polarized manifolds that belong to the T-class, such as Calabi-Yau manifolds, hyperk\"ahler
manifolds, many hypersurfaces and complete intersections in
projective spaces.

With the above notations, we can define the period map $$\Phi_{\Z}:\, \Z\to D/\Gamma$$
and the lifted period map $\Phi:\, \T \to D$ such that the diagram
$$\xymatrix{
\T \ar[r]^-{\Phi} \ar[d]^-{\pi_m} & D\ar[d]^-{\pi_{D}}\\
\Z \ar[r]^-{\Phi_{\Z}} & D/\Gamma }$$ is commutative, where
$\pi_{m}:\, \T\to \Z$ is the universal covering map, and $$\pi_D:\, D\to D/\Gamma$$ denotes the natural quotient map.
We can also define the period map $$\Phi':\, \T' \to D$$ from the Torelli space $\T'$, such that the following diagram commutes
$$\xymatrix{
\T \ar[dr]^-{\pi}\ar[rr]^-{\Phi} \ar[dd]^-{\pi_m} && D\ar[dd]^-{\pi_{D}}\\
&\T'\ar[ur]^-{\Phi'}\ar[dl]_-{\pi'_{m}}&\\
\Z \ar[rr]^-{\Phi_{\Z}} && D/\Gamma,
}$$
where the maps $\pi'_{m}:\, \T'\to \Z$ and $\pi:\, \T\to \T'$ are the natural covering maps.

Since $\Z$ is smooth, the local Torelli is equivalent to that the tangent maps of $\Phi_{\Z}$ and $\Phi$ are everywhere nondegenerate.
To proceed further, we introduce the notion of {\em strong local Torelli}. The purpose of introducing this new notion is to construct an affine structure on certain natural completion space of the Torelli space $\T'$. The affine structure is one of the most crucial ingredients for our arguments.

More precisely we say that strong local Torelli holds for the polarized manifold $(X,L)$, if there exists a holomorphic subbundle $\mathcal{H}$ of the Hodge bundle
$$\bigoplus_{k=1}^{n}\text{Hom}(\mathscr{F}^k/\mathscr{F}^{k+1}, \mathscr{F}^{k-1}/\mathscr{F}^k)$$
on the period domain ${D}$, such that the holomorphic tangent map of the period map induces an isomorphism from the tangent bundle  $\mathrm{T}^{1,0}\T$ to the Hodge subbundle $\mathcal{H}$ on $\T$

\begin{equation}\label{int local assumption}
d\Phi:\, \mathrm{T}^{1,0}\T\stackrel{\sim}{\longrightarrow} \mathcal{H}.
\end{equation}
Here we still denote by $\mathcal{H}$ the pull-back of Hodge subbundle on $\T$ for convenience.

Strong local Torelli is satisfied for various examples, including Calabi--Yau manifolds, hyperk\"ahler manifolds, some hypersurfaces in projective space or weighted projective space, and certain complete intersections in complex projective space.
We will show that they all satisfy strong local Torelli, and have explicit  Hodge subbundle $\mathcal{H}$ as required in \eqref{int local assumption} above.
Among them, we should mention the smooth hypersurfaces of degree $d$ in $\mathbb{P}^{n+1}$ satisfying $$d|(n+2)\text{ and }d\ge 3.$$ Till now, generic Torelli theorem for such hypersurfaces is still open, except the case of quintic threefold which is proved by Voisin \cite{Voisin99}.

By using eigenperiod map, we will get more non-trivial examples admitting Hodge subbundles in \eqref{int local assumption}, which contain
the arrangements of hyperplanes in $\mathbb{P}^{n}$, cubic surface and cubic threefold.

In this paper, we will prove that strong local Torelli implies global Torelli  on the Torelli space $\T'$ of the polarized manifolds in the T-class, which is the main theorem of this paper.

\begin{thm}[Main theorem]\label{intr main}
Suppose that the polarized manifold $(X,L)$ belongs to the T-class, and that strong local Torelli holds for $(X,L)$, then the global Torelli theorem holds on the Torelli space $\T'$, i.e. the period map $\Phi':\, \T'\to D$ is injective.
\end{thm}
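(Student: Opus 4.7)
The plan is to use strong local Torelli to endow the Torelli space $\T'$ with a global ``affine chart'' realized as a bounded domain in complex Euclidean space, and then deduce injectivity of $\Phi'$ directly from the injectivity of this chart. The point is that \eqref{int local assumption} supplies more than non-degeneracy of $d\Phi$: it identifies the image of $d\Phi$ with a \emph{globally defined} Hodge subbundle $\mathcal{H}\subset T^{1,0}D$, and this fixed linear direction will let us project the period map onto a fixed affine space.

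Concretely, fix a base point $p_{0}\in\T$ and set $o=\Phi(p_{0})\in D$. Via the Harish-Chandra (Borel) embedding, the unipotent group $N_{+}$ identifies an open neighborhood of $o$ in the compact dual $\check D$ with the Lie algebra $\mathfrak n_{+}\cong\C^{N}$, and inside this chart $D$ itself sits as a bounded domain. The fiber of $\mathcal H$ at $o$ is a linear subspace $\mathcal{H}_{o}\subset\mathfrak n_{+}$, and composing the period map with the linear projection $\mathfrak n_{+}\twoheadrightarrow\mathcal{H}_{o}$ yields a holomorphic map
\begin{equation*}
\Psi:\,\T\longrightarrow\mathcal{H}_{o}\cong\C^{N}.
\end{equation*}
Hypothesis \eqref{int local assumption} gives that $d\Psi$ is an isomorphism at the base point, and globalizing via the $\mathcal{H}$-horizontality of $\Phi$ together with the nilpotent structure of $N_{+}$ yields that $\Psi$ is a local biholomorphism on all of $\T$. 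Since $\mathcal{H}$ lives on $D$, not on $\T$, the map $\Psi$ depends only on the image $\Phi(p)\in D$; hence it descends through $\pi\colon\T\to\T'$ to a holomorphic map $\Psi'\colon\T'\to\mathcal{H}_{o}$ satisfying $\Psi=\Psi'\circ\pi$ and $\Psi'=\operatorname{pr}_{\mathcal{H}_{o}}\circ\Phi'$.

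It now suffices to show that $\Psi$ is injective on $\T$. Indeed, once this is established, surjectivity of the covering $\pi$ forces $\Psi'$ to be injective, and since $\Psi'$ factors through $\Phi'$, the injectivity of $\Phi'$ follows at once. To prove injectivity of $\Psi$, the plan is to realize $\T$ via $\Psi$ as a bounded pseudoconvex domain inside $\mathcal{H}_{o}$: one extends $\Psi$ holomorphically to a natural completion $\overline{\T}$ of the Teichm\"uller space, with bounded image coming from the boundedness of $D$ in its Harish-Chandra chart; combined with the simple connectedness of $\T$ and a rigidity argument showing that the deck transformations of $\T\to\Z$ must act as affine transformations compatible with the Hodge subbundle, this forces $\Psi$ to be a biholomorphism onto its image.

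The main obstacle is this final injectivity step. Strong local Torelli is a local statement, so upgrading the local biholomorphism $\Psi$ into an affine space to a global biholomorphism onto a bounded domain is nontrivial; it requires a careful construction of the completion $\overline{\T}$ together with an extension theorem for $\Psi$ across the boundary locus, together with precise use of Griffiths transversality to confine $\Phi(\T)$ within a bounded region of the $N_{+}$-chart. The T-class hypothesis enters here through the existence of a smooth cover $\Z$ equipped with an analytic family, which is what makes the Torelli space $\T'$ well-behaved as a complex manifold covered by the Teichm\"uller space $\T$ and thus allows the monodromy and rigidity arguments to run cleanly.
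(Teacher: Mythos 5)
Your opening moves coincide with the paper's: $\Psi=P\circ\Phi$ into the fixed Euclidean piece determined by the Hodge subbundle, boundedness of $\Phi(\T)$ in the $N_+$-chart, and the resulting global affine structure are exactly Theorems \ref{boundedness} and \ref{affine-T} quoted from \cite{LS}. The argument breaks at your reduction ``it suffices to show that $\Psi$ is injective on $\T$.'' Since $\Psi=\Psi'\circ\pi$ with $\pi:\T\to\T'$ the universal covering of the Torelli space, injectivity of $\Psi$ on $\T$ would force $\pi$ to be injective, i.e.\ $\T\simeq\T'$; this is strictly stronger than the theorem and fails whenever $\pi_1(\T')\neq 1$, which one should expect in general because (as the paper shows a posteriori) $\T'$ is biholomorphic to the complement of an analytic subvariety in the bounded domain $\T^{H}\simeq A\cap D$, and such complements typically have nontrivial fundamental group. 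The paper never proves injectivity of $\Phi$ or $\Psi$ on $\T$ itself: injectivity is established on the Hodge metric completion and then transferred \emph{down} to $\T'$, not lifted to $\T$. Your proposed route to this (false) intermediate claim is also unsubstantiated: deck transformations of $\T\to\Z$ act through the monodromy representation into $\mathrm{Aut}(H_{\mathbb Z},Q)$, and these elements do not preserve the $N_+$-chart or the projection onto $A$, so there is no ``affine rigidity'' of the deck action to exploit; moreover $\T$ carries no complete metric that would let a covering-type argument run on it directly.

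What is missing, concretely, are the two steps the paper actually uses in place of your final step. First, one works on $\T^{H}$, the universal cover of the Hodge metric completion $\ZZ$ of the moduli space: the isomorphism $d\Phi:\mathrm{T}^{1,0}\T\simeq\mathcal{H}$ is extended across $\T^{H}\setminus i_{\T}(\T)$ (this needs the trivial Picard--Lefschetz transformations at level $m\geq 3$, Lemma \ref{trivial monodromy} and Propositions \ref{openness1}--\ref{openness2}), so that $\Psi^H_m$ is nondegenerate (Theorem \ref{affine-TmH}); then the completeness of the Hodge metric on $\T^{H}$ and the Griffiths--Wolf lemma show $\Psi^{H}:\T^{H}\to A\cap D$ is a covering map, and simple connectedness of $A\cap D$ makes it a biholomorphism, whence $\Phi^{H}$ is an embedding. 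Second, and decisively, one must identify the image $i_{\T}(\T)\subset\T^{H}$ with the Torelli space $\T'$ (Proposition \ref{im-injective}); this is the step that converts injectivity of $\Phi^{H}$ into injectivity of $\Phi'$, and its proof uses the level $m$ structures for arbitrarily large $m$ (via the $m$-independence of $\T^{H}$, Corollary \ref{TmH}) to force the automorphism comparing the two markings to be congruent to the identity modulo every $m$, hence equal to the identity. Your proposal has no substitute for either ingredient: the ``natural completion $\overline{\T}$ of the Teichm\"uller space'' is left unspecified (it must be the universal cover of $\ZZ$, not a completion of $\T$ in isolation), and the level structures enter only in your remark that $\Z$ is smooth, whereas in the actual proof they are what makes the passage from the completion back to $\T'$ possible.
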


Our proof mainly uses the construction of certain natural completion space of the Torelli space, which we will prove to be the universal cover of the Griffiths completion of the moduli space $\Z$, the affine structure on the Teichm\"uller space and
the boundedness of the period maps that we proved in \cite{LS}.

The above theorem has several interesting corollaries.

\begin{corollary}\label{intr case 1}
Let the polarized manifold $(X,L)$ be one of the following cases,
\begin{enumerate}
\item[(i)] K3 surface;
\item[(ii)] Calabi-Yau manifold;
\item[(iii)] hyperk\"ahler manifold.
\end{enumerate}
Then global Torelli theorem holds on the Torelli space $\T'$ for $(X,L)$.

\end{corollary}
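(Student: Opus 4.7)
The strategy is to apply the Main Theorem \ref{intr main}, so that Corollary \ref{intr case 1} reduces in each of the three cases to two verifications: (a) the polarized manifold $(X,L)$ belongs to the T-class of Definition \ref{intr T-class}, and (b) strong local Torelli in the sense of \eqref{int local assumption} holds for $(X,L)$. Since K3 surfaces sit inside both the Calabi--Yau family and the hyperk\"ahler family (in dimension two), case (i) will follow from the arguments for (ii) and (iii), though (i) is the model and is classical.

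For (a), I would appeal to the standard constructions of the moduli space $\mathscr{L}_{m}$ of polarized manifolds with level $m$ structure for $m\ge 3$. For polarized K3 surfaces this goes back to the work of Pjatecki\u{\i}-\v{S}apiro and \v{S}afarevi\v{c}. For polarized Calabi--Yau manifolds it follows from the construction via Hilbert schemes together with Viehweg's quasi-projectivity results, the level $m\ge 3$ structure rigidifying the finite automorphisms so that the irreducible component $\Z$ containing $(X,L)$ is a smooth complex manifold carrying an analytic universal family $f_{m}\colon \U_{m}\to \Z$. The hyperk\"ahler case is analogous, using the existence of a coarse moduli space and again that level $m\ge 3$ kills automorphisms. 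This places $(X,L)$ in the T-class in each case.

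For (b), I would exhibit the required Hodge subbundle $\mathcal{H}$ explicitly. For a Calabi--Yau $n$-fold, take
\[
\mathcal{H} \;=\; \Hom\bigl(\mathscr{F}^{n},\,\mathscr{F}^{n-1}/\mathscr{F}^{n}\bigr)\;\subset\;\bigoplus_{k=1}^{n}\Hom\bigl(\mathscr{F}^{k}/\mathscr{F}^{k+1},\,\mathscr{F}^{k-1}/\mathscr{F}^{k}\bigr).
\]
Contraction with a trivialising holomorphic $n$-form gives the canonical isomorphism $H^{1}(X,T_{X})\cong H^{n-1,1}(X)$; the Bogomolov--Tian--Todorov unobstructedness theorem identifies $\mathrm{T}^{1,0}\T$ fibrewise with $H^{1}(X,T_{X})$; and the Kodaira--Spencer description of $d\Phi$ then yields the isomorphism $d\Phi\colon \mathrm{T}^{1,0}\T\stackrel{\sim}{\to}\mathcal{H}$. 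For hyperk\"ahler manifolds, and for K3 surfaces as the two-dimensional case, the period map targets the weight-$2$ period domain with $\mathscr{F}^{2}$ of rank one, and the natural choice is again
\[
\mathcal{H} \;=\; \Hom\bigl(\mathscr{F}^{2},\,\mathscr{F}^{1}/\mathscr{F}^{2}\bigr),
\]
the map $d\Phi\colon \mathrm{T}^{1,0}\T\to \mathcal{H}$ being an isomorphism by the classical local Torelli for these weight-$2$ Hodge structures combined with Bogomolov--Tian--Todorov unobstructedness (the Beauville--Bogomolov--Fujiki form providing the non-degeneracy on the hyperk\"ahler side).

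The delicate step is the verification that $d\Phi$ is an isomorphism \emph{onto} the chosen $\mathcal{H}$ rather than merely injective into the full horizontal sum $\bigoplus_{k}\Hom(\mathscr{F}^{k}/\mathscr{F}^{k+1},\mathscr{F}^{k-1}/\mathscr{F}^{k})$; this is precisely where the Calabi--Yau or hyperk\"ahler hypothesis is essential, since in these cases the higher-weight components of $d\Phi$ are determined by the first piece through cup product with the Kodaira--Spencer class, cutting the image down to $\mathcal{H}$. Once $\mathcal{H}$ is identified and strong local Torelli verified in each case, the Main Theorem applies directly, delivering the injectivity of $\Phi'\colon \T'\to D$.
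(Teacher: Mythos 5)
Your proposal follows essentially the same route as the paper: membership in the T-class via smoothness of the level-$m$ moduli spaces, strong local Torelli via the explicit Hodge subbundles $\mathcal{H}=\mathrm{Hom}(\mathscr{F}^n,\mathscr{F}^{n-1}/\mathscr{F}^n)$ (Calabi--Yau) and $\mathcal{H}=\mathrm{Hom}(\mathscr{F}^2,\mathscr{F}^1/\mathscr{F}^2)$ (K3, hyperk\"ahler) obtained from contraction with the trivializing form together with Bogomolov--Tian--Todorov unobstructedness, and then a direct application of the main theorem. This matches the paper's argument, which simply cites the examples of Section 2 and the smoothness of the moduli spaces with level structure before invoking Theorem \ref{Global Torelli theorem}.
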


Global Torelli theorem for K3 surfaces is proved in algebraic and K\"ahlerian cases in many famous papers, see \cite{BR}, \cite{Fri}, \cite{LooPet}, \cite{PSS} and so on.
In \cite{Verbitsky}, Verbitsky proved the global Torelli theorem for hyperk\"ahler manifolds on the irreducible components of the birational Teichm\"uller space and the birational moduli space as defined \cite{Verbitsky}.
Corollary \ref{intr case 1} in case (i) and (iii) can be viewed as a different version of their theorems on the Torelli space as defined in our paper.  Corollary \ref{intr case 1} in case (ii) was first proved in \cite{CLS13}.

\begin{corollary}\label{intr case 3}
Let the polarized manifold $(X,L)$ be one of the following cases,
\begin{enumerate}
\item[(i)] the smooth hypersurface of degree $d$ in $\mathbb{P}^{n+1}$ satisfying $d|(n+2)$ and $d\ge 3$;
\item[(ii)] the polarized manifold in $\mathbb{P}^{m-1}$ associated to an arrangement of $m$ hyperplanes in $\mathbb{P}^{n}$ with $m\ge n$, defined as  in \eqref{variety of hyperplane} of Section \ref{eigenperiods};
\item[(iii)] smooth cubic surface or cubic threefold.
\end{enumerate}
Then global Torelli theorem holds on the corresponding Torelli space $\T'$ for $(X,L)$.

\end{corollary}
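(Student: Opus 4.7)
The plan is to apply Theorem \ref{intr main} directly to each of cases (i)--(iii). Since that theorem yields global injectivity of $\Phi':\T'\to D$ as soon as $(X,L)$ lies in the T-class and strong local Torelli holds, the proof reduces to verifying these two hypotheses in each case, with the Hodge subbundle $\mathcal{H}$ of \eqref{int local assumption} produced either via Griffiths' Jacobian ring (case (i)) or via the eigenperiod construction (cases (ii) and (iii)).

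For the T-class property, in case (i) the smooth hypersurfaces of degree $d\ge 3$ in $\mathbb{P}^{n+1}$ admit, via GIT stability combined with rigidification by a level $m\ge 3$ structure, a smooth quasi-projective moduli space $\Z$ equipped with a universal family $\U_m\to\Z$; this is a classical construction. For cases (ii) and (iii) the T-class property is obtained via an associated cyclic cover: to an arrangement of hyperplanes (respectively to a cubic surface or cubic threefold) one attaches a cyclic branched cover carrying a natural finite group action, and the smooth moduli with level structure for these covers furnishes the required $\Z$ and analytic family $\U_m\to\Z$ in the sense of Definition \ref{intr T-class}.

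For strong local Torelli, in case (i) I would invoke Griffiths' residue calculus description of the primitive Hodge structure of a smooth hypersurface: the graded pieces $\mathscr{F}^k/\mathscr{F}^{k+1}$ identify with explicit graded pieces of the Jacobian ring $R(f)$, and the Kodaira--Spencer image inside $\mathrm{H}^1(X,\mathrm{T}_X)$ identifies with the degree-$d$ piece of $R(f)$. The divisibility hypothesis $d\mid (n+2)$ is precisely what ensures that multiplication by this degree-$d$ piece sends each relevant graded piece into the next, so that one can define $\mathcal{H}$ as the subbundle of $\bigoplus_k \mathrm{Hom}(\mathscr{F}^k/\mathscr{F}^{k+1},\mathscr{F}^{k-1}/\mathscr{F}^k)$ cut out by such multiplication maps; the resulting $d\Phi$ is then an isomorphism onto $\mathcal{H}$. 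For cases (ii) and (iii) I would pass to the eigenspace of the cyclic group action on the cover: the eigenperiod takes values in a sub-period domain $D_1\subset D$, whose Hodge bundle furnishes $\mathcal{H}$, and the isomorphism \eqref{int local assumption} follows from classical results of Deligne--Mostow and Terada for the hyperplane arrangement case, and of Allcock--Carlson--Toledo (together with the analogous results for cubic surfaces) for the cubic cases.

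The main obstacle I expect is case (i), where the requirement is not merely infinitesimal Torelli (injectivity of $d\Phi$) but the identification of the image of $d\Phi$ with a natural holomorphic Hodge subbundle globally on $\T$; the divisibility hypothesis $d\mid(n+2)$ is what makes the relevant Jacobian-ring multiplication map land in a single Hodge graded component, and one must verify that this construction behaves uniformly in families. In cases (ii) and (iii) the principal subtlety is matching the Torelli space $\T'$ of the original polarized manifold with the eigenperiod Teichm\"uller space of its cyclic cover, so that strong local Torelli for the cover transfers back to $(X,L)$ in the form required by Theorem \ref{intr main}.
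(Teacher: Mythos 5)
Your overall route is the paper's: reduce to Theorem \ref{intr main} and verify T-class and strong local Torelli case by case, with the Jacobian ring handling (i) and the eigenperiod construction handling (ii) and (iii). For (ii) and (iii) this matches the paper (Examples \ref{arrangements} and \ref{cubic}, where Deligne--Mostow/Dolgachev--Kondo and Allcock--Carlson--Toledo supply the isomorphism onto the eigen-Hodge piece $\text{Hom}(\mathscr{F}^k_{\chi},\mathscr{F}^{k-1}_{\chi}/\mathscr{F}^k_{\chi})$, which is a genuine Hodge subbundle on $D$), and for the T-class property the paper invokes Javanpeykar--Loughran and Benoist for smooth moduli with level structure rather than a generic GIT argument, which is a matter of references rather than of substance.

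The genuine gap is in your verification of strong local Torelli in case (i). Definition \ref{local-Torelli} requires $\mathcal{H}$ to be a holomorphic subbundle of the Hodge bundle on the period domain $D$ itself; a subbundle ``cut out by Jacobian-ring multiplication maps'' is not such an object, because the Jacobian ring depends on the hypersurface and not only on the Hodge flag, so what you propose is merely the image of $d\Phi$ over $\T$ --- exactly the thing that does not suffice, since the whole point of $\mathcal{H}$ living on $D$ (hence on $A\cap D$) is to allow the extension of the identification $\mathrm{T}^{1,0}\T_m\simeq \Psi_m^*\mathcal{H}_A$ across $\T^H\setminus\T_m$ in Lemma \ref{Hodge extension}. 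Your stated role of $d\mid(n+2)$ is also off: multiplication by $R(F)^d$ always carries each graded piece $R(F)^{d(n+1-k)-n-2}$ to the adjacent one, with or without divisibility. The mechanism actually used in Example \ref{Hypersurfaces} is that $d\mid(n+2)$ together with $d\geq 3$ produces an integer $k$ with $H^{k,n-k}_{pr}(X)\simeq R(F)^0\simeq\C$ and $H^{l,n-l}_{pr}(X)=0$ for $l>k$, i.e. the primitive Hodge structure is of Calabi--Yau type; then the natural Hodge subbundle $\mathcal{H}=\text{Hom}(\mathscr{F}^{k},\mathscr{F}^{k-1}/\mathscr{F}^{k})$, of the special form \eqref{remark local assumption} and manifestly defined on $D$, has rank equal to $\dim R(F)^d=\dim H^1(X,\Theta_X)$, and Griffiths' residue description plus Macaulay's theorem makes $d\Phi$ an isomorphism onto it. You correctly flag this as the main obstacle in your final paragraph, but the construction you offer does not resolve it; without the one-dimensionality of the top nonvanishing piece your case (i) argument does not establish the hypothesis of Theorem \ref{intr main}.
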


We should mention that the Hodge subbundles satisfying \eqref{int local assumption} in case (ii) and (iii) of Corollary \ref{intr case 3} are constructed by the eigenperiod maps, see Example \ref{Hypersurfaces} and \ref{arrangements}. Also note that the moduli space in case (ii) of Corollary \ref{intr case 3}  is precisely the moduli space of hyperplane arrangements as discussed in \cite{DK}. See Section 2.2 for more details.

We remark that case (i) of Corollary \ref{intr case 3} is interesting and new.  Voisin in \cite{Voisin99} proved the generic Torelli theorem on the moduli space of quintic threefold. Case (ii) of Corollary \ref{intr case 3} with $n=1$ can be considered as a version on the Torelli space of the main result of \cite{DM}, which is the famous Deligne-Mostow theory. The general case for (ii) in Corollary \ref{intr case 3} with $n>1$ is new.

In \cite{ACT02} and \cite{ACT11}, Allcock, Carlson and Toledo proved the global Torelli theorem for cubic surfaces and cubic threefolds respectively, as one of their main results. They proved that the refined period map on the moduli space of smooth cubic surfaces or cubic threefolds is an isomorphism of complex analytic orbifolds onto its image, and the refined period map on the moduli space of framed smooth cubic surfaces or cubic threefolds is an isomorphism of complex manifolds onto its image.
Case (iii) of Corollary \ref{intr case 3} can be considered as a different version of their results in \cite{ACT02} and \cite{ACT11} for the ordinary period map on the Torelli space.


Let $\sZ_{m}^H$ denote the Hodge metric completion of the moduli space $\Z$ of polarized manifolds  with level $m$ structure with respect to the induced Hodge metric on $\Z$, and $\T^H$ be the universal cover of $\sZ_{m}^H$.

We will prove that $\T^H$ is the Hodge metric completion of the Torelli space $\T'$.
Moreover we will prove  the following theorem.

\begin{thm}\label{intr-pseudoconvex}
Suppose that the polarized manifold $(X,L)$ belongs to the T-class and the strong local Torelli holds for $(X,L)$. Then the Hodge metric completion $\T^H$ of the Torelli space $\T'$ is a bounded pseudoconvex
domain in $\C^{N}$. In particular, there exists a unique complete
K\"ahler-Einstein metric on $\T^{H}$ with Ricci curvature $-1$.
\end{thm}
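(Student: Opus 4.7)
My plan is to deduce Theorem~\ref{intr-pseudoconvex} by combining the global Torelli statement of Theorem~\ref{intr main} with an affine structure on $\T^H$ coming from strong local Torelli, and then invoking the authors' earlier boundedness result in \cite{LS} together with Cheng--Yau's theorem. The strategy is to realize $\T^H$ as an injective image of a holomorphic map into $\C^N$, check bijectivity onto its image, check boundedness, and finally derive pseudoconvexity from the Hodge metric completeness.

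First I would construct the affine structure. By strong local Torelli, the holomorphic tangent map $d\Phi$ identifies $\mathrm{T}^{1,0}\T$ with the Hodge subbundle $\mathcal{H}$, which sits inside the flat bundle $\bigoplus_k \mathrm{Hom}(\mathscr{F}^k/\mathscr{F}^{k+1}, \mathscr{F}^{k-1}/\mathscr{F}^k)$ on $D$ trivialized by any basis of the reference fiber. Integrating the canonical flat connection on $\mathcal{H}$ through $d\Phi$ gives local holomorphic coordinates on $\T$ whose transition maps are affine; this is the affine structure on $\T$ mentioned in the introduction. Because $\T^H$ is the universal cover of $\sZ_m^H$, and the inclusion $\T \hookrightarrow \T^H$ is open and dense, the flat connection (and hence the affine structure) extends to $\T^H$. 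Concretely this produces a developing map
\begin{equation*}
\tau:\, \T^H \longrightarrow \C^N,
\end{equation*}
holomorphic and of full rank everywhere, that extends the affine developing map on $\T$.

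Next I would verify that $\tau$ is injective. Since $\T^H$ is the Hodge metric completion of $\T'$, and the period map $\Phi'$ is distance-nonincreasing with respect to the Hodge metric, $\Phi'$ extends continuously to $\Phi^H:\T^H \to D$. By Theorem~\ref{intr main}, $\Phi'$ is injective on $\T'$; a standard density and continuity argument, together with the fact that $\tau$ is locally equivalent to $\Phi^H$ composed with an affine chart on $D$ (this is exactly what strong local Torelli gives), upgrades the injectivity to $\T^H$ itself. Thus $\tau$ is a biholomorphism onto its image $\tau(\T^H)\subset \C^N$. Boundedness of this image is then immediate from the boundedness of the (lifted) period map established in \cite{LS}, since the affine developing coordinates are, up to a linear change, the Harish-Chandra type coordinates in which $\Phi(\T)$ was shown to be bounded.

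The remaining step is pseudoconvexity, which I expect to be the main technical obstacle. The natural approach is to exhibit a continuous plurisubharmonic exhaustion function on $\tau(\T^H)$: because $\T^H$ is by construction complete in the Hodge metric and the induced Hodge metric on $\tau(\T^H)$ dominates the Euclidean metric on bounded subsets, the squared Hodge-distance $\rho_p(q)=d_H(p,q)^2$ from a fixed base point $p$ should serve as a continuous exhaustion; its plurisubharmonicity follows from the non-positivity of holomorphic sectional curvature in the horizontal directions, which is built into the strong local Torelli identification $\mathrm{T}^{1,0}\T \cong \mathcal{H}$. Once pseudoconvexity is established, the final assertion about the unique complete Kähler--Einstein metric with Ricci curvature $-1$ is an application of the Cheng--Yau--Mok--Yau theorem on bounded pseudoconvex domains in $\C^N$. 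The subtle points I expect to wrestle with are (a) ensuring that the extension of the affine structure from $\T$ to the completion $\T^H$ is single-valued globally, which requires using that $\T^H$ is simply connected as the universal cover of $\sZ_m^H$, and (b) verifying plurisubharmonicity of the exhaustion function rigorously, where I would fall back on the affine-chart description plus boundedness to reduce to standard Euclidean arguments.
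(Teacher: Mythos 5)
Your overall strategy (affine/developing map into $\C^N$, boundedness from \cite{LS}, then pseudoconvexity and Mok--Yau) is in the right spirit, but two of your key steps have genuine gaps, and both are exactly the places where the paper does real work. First, the injectivity of the map on the completion $\T^H$ does not follow from Theorem~\ref{intr main} by ``a standard density and continuity argument'': a continuous extension of an injective map to a metric completion can identify boundary points (the completion may glue points that the dense part separates), so injectivity on $\T'$ gives nothing on $\T^H\setminus\T'$. In the paper the logic in fact runs the other way: one first proves that the extended affine map $\Psi^H_m:\T^H\to A\cap D$ is nondegenerate (Theorem~\ref{affine-TmH}, which needs the extension of the bundle isomorphism $\mathrm{T}^{1,0}\T_m\simeq\Psi_m^*\mathcal{H}_A$ across $\T^H\setminus\T_m$ via Lemma~\ref{Hodge extension} and the triviality of the Picard--Lefschetz transformations there), then uses completeness of the Hodge metric and the Griffiths--Wolf lemma to conclude $\Psi^H$ is a covering map onto $A\cap D$, and finally kills the deck group either by showing $A\cap D$ is simply connected (via the Harish-Chandra type diffeomorphism $\pi_+$ with $\exp(\mathfrak{a}_0)$) or by the straight-line argument with the affine structure (Theorem~\ref{iso-TmH}). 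The global Torelli theorem on $\T'$ is then a consequence of this biholomorphism, not an input, so your proposed shortcut both misses the needed argument and, within the paper's architecture, would be circular.

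Second, your pseudoconvexity step is not established. The squared Hodge distance from a base point is not known to be plurisubharmonic here: the Hodge metric on $D$ is not nonpositively curved (only the horizontal holomorphic sectional curvatures are negative), and on the slice $A\cap D$ the Gauss equation introduces second fundamental form terms, so the convexity of $d_H(p,\cdot)^2$ along this horizontal affine submanifold is precisely what would have to be proved; ``reducing to standard Euclidean arguments via boundedness'' cannot work, since boundedness alone never yields pseudoconvexity. The paper instead invokes the Griffiths--Schmid result (Proposition~\ref{pc of D}) that $D$ carries a $C^\infty$ exhaustion function $f$ whose Levi form is positive definite on the horizontal tangent bundle, restricts $g=f|_{A\cap D}$ (whose Levi form is then positive because $\mathrm{T}^{1,0}(A\cap D)$ is horizontal by Griffiths transversality, $\mathfrak a\subset\mathfrak g^{-1,1}$), and checks that $g$ is an exhaustion of $A\cap D$ using that $\{f\le c\}$ is compact in $D$ and that $A\cap D$ is complete, hence closed, in $D$. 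With $\T^H\simeq A\cap D$ bounded in $A\simeq\C^N$ (boundedness indeed comes from the Harish-Chandra argument in Lemma 3.1 of \cite{LS}, as you say), the K\"ahler--Einstein statement then follows from \cite{MokYau}. You should replace your curvature heuristic with this exhaustion-function argument, or supply a genuine proof of plurisubharmonicity of your proposed function.
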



Let $\Gamma=\rho(\pi_1(\ZZ))$ denote the global monodromy group,  where $$\rho:\, \pi_1(\ZZ) \to \text{Aut}(H_{\mathbb Z}, Q)$$ is the monodromy representation. As an
application, we will prove that the global Torelli theorem holds on the moduli space with certain level structure. More precisely, we have the following theorem.

\begin{theorem}\label{intr generic}
Suppose that the polarized manifold $(X,L)$ belongs to the T-class and the strong local Torelli holds for $(X,L)$.  Then the extended period map $\Phi_{\sZ_{m}^{H}}:
\,\sZ_{m}^{H}\rightarrow D/\Gamma$ is injective. As a consequence, the global Torelli theorem holds on the moduli space $\sZ_{m}$ with level $m$ structure.
\end{theorem}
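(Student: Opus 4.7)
The plan is to lift the map $\Phi_{\sZ_{m}^{H}}:\sZ_{m}^{H}\to D/\Gamma$ to a holomorphic map $\Phi^{H}:\T^{H}\to D$ on the universal cover $\T^{H}$ of $\sZ_{m}^{H}$, prove injectivity of $\Phi^{H}$, and then descend via equivariance. Since $\T^{H}$ is (by construction, as stated just before Theorem \ref{intr-pseudoconvex}) also the Hodge metric completion of the Torelli space $\T'$, and $\T'\subset\T^{H}$ is a dense open subset with $\Phi^{H}|_{\T'}=\Phi'$, the Main theorem (Theorem \ref{intr main}) already gives injectivity of $\Phi^{H}$ on the dense subset $\T'$.

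The key step is to extend injectivity from $\T'$ to all of $\T^{H}$. For this I rely on strong local Torelli, which together with Theorem \ref{intr-pseudoconvex} realizes $\T^{H}$ as a bounded pseudoconvex domain in $\C^{N}$ on which $\Phi^{H}$ is a local biholomorphism onto its image in $\mathcal{H}$. Suppose $p,q\in\T^{H}$ with $\Phi^{H}(p)=\Phi^{H}(q)$. Choose neighborhoods $U_{p}\ni p$ and $U_{q}\ni q$ on which $\Phi^{H}$ is biholomorphic to its image, and consider the holomorphic map $\psi:=(\Phi^{H}|_{U_{q}})^{-1}\circ\Phi^{H}|_{U_{p}}:U_{p}\to U_{q}$ with $\psi(p)=q$. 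For every $x\in U_{p}\cap\T'$, the images $\psi(x)$ and $x$ lie in $\T'$ and have equal period, so $\psi(x)=x$ by injectivity of $\Phi'$; analyticity propagates this identity on the dense subset $U_{p}\cap\T'$ to all of $U_{p}$, giving $p=q$.

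Once $\Phi^{H}$ is known to be injective, the descent is routine. The monodromy representation $\rho:\pi_{1}(\sZ_{m}^{H})\to\Gamma$ intertwines deck transformations on $\T^{H}$ with the $\Gamma$-action on $D$, i.e.\ $\Phi^{H}\circ\sigma=\rho(\sigma)\circ\Phi^{H}$ for every deck transformation $\sigma$, and $\rho$ is surjective onto $\Gamma$. Given $p,q\in\sZ_{m}^{H}$ with $\Phi_{\sZ_{m}^{H}}(p)=\Phi_{\sZ_{m}^{H}}(q)$, choose lifts $\tilde p,\tilde q\in\T^{H}$; then $\Phi^{H}(\tilde p)=\gamma\cdot\Phi^{H}(\tilde q)=\Phi^{H}(\sigma\tilde q)$ for $\sigma$ with $\rho(\sigma)=\gamma$, so $\tilde p=\sigma\tilde q$ by injectivity of $\Phi^{H}$ and hence $p=q$ after projection. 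The global Torelli statement on $\sZ_{m}$ follows immediately, since $\sZ_{m}\hookrightarrow\sZ_{m}^{H}$ with $\Phi_{\sZ_{m}}=\Phi_{\sZ_{m}^{H}}|_{\sZ_{m}}$. The main obstacle is the second paragraph above: passing from injectivity on the dense open subset $\T'$ to injectivity on its completion $\T^{H}$, which crucially uses both the bounded pseudoconvex / affine structure provided by Theorem \ref{intr-pseudoconvex} and the local biholomorphism property coming from strong local Torelli, since generically injectivity on a dense open subset is not enough to force a holomorphic map to be globally injective.
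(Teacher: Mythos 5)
Your proof has a genuine gap at exactly the step you identify as the key one. The map $\psi:=(\Phi^{H}|_{U_{q}})^{-1}\circ\Phi^{H}|_{U_{p}}$ is not well defined: strong local Torelli makes $\Phi^{H}$ an immersion into $D$, whose local images are $N$-dimensional submanifolds of the much larger period domain, not open subsets of $D$. Thus $\Phi^{H}(U_{p})$ has no reason to be contained in $\Phi^{H}(U_{q})$; near the common value $\Phi^{H}(p)=\Phi^{H}(q)$ you have two germs of $N$-dimensional submanifolds through one point, and asserting that one can be pulled back through the other amounts to assuming they coincide, which is essentially the injectivity you are trying to prove. If you repair this by replacing $\Phi^{H}$ with $\Psi^{H}=P\circ\Phi^{H}$, which is a local biholomorphism onto open subsets of $A\cap D$ so that $\psi$ makes sense, two further problems appear: the claim that $\psi(x)\in\T'$ for $x\in U_{p}\cap\T'$ is unjustified (a priori $\psi(x)$ could land in the analytic subvariety $\T^{H}\setminus\T'$; this is fixable by shrinking to the dense set $\psi^{-1}(\T')\cap\T'$), and, more seriously, the equality you then obtain is $\Psi'(x)=\Psi'(\psi(x))$, so you need injectivity of $\Psi'=P\circ\Phi'$ rather than of $\Phi'$; in the paper injectivity of $\Psi'$ is itself deduced from the biholomorphism $\Psi^{H}:\T^{H}\to A\cap D$, i.e. from the very statement you are trying to reach. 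Note also that injectivity of an immersion on a dense open subset does not imply global injectivity in general (an immersed curve with one self-crossing located off the dense subset already fails), so some genuinely global input cannot be avoided here. A related logical wrinkle: the identification of $\T'$ with a dense open subset of $\T^{H}$ satisfying $\Phi^{H}|_{\T'}=\Phi'$ is Proposition \ref{im-injective}, which the paper proves \emph{using} the injectivity of $\Phi^{H}$, so your route runs against the paper's order of deduction.

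For comparison, the paper never tries to propagate injectivity from the dense subset; it proves injectivity of $\Phi^{H}$ first: $\Psi_{m}^{H}$ is shown to be nondegenerate (Theorem \ref{affine-TmH}), hence by completeness of the Hodge metric and the Griffiths--Wolf lemma (Lemma \ref{covering-lemma}) a covering map onto $A\cap D$ (Corollary \ref{cover-TmH}), and then a biholomorphism since $A\cap D$ is simply connected (Theorem \ref{iso-TmH}); injectivity of $\Phi^{H}$ is Corollary \ref{injectivity of P^H}. Theorem \ref{intr generic} is then proved either by showing that $\Phi_{\ZZ}$ is a covering map onto its smooth image and computing fundamental groups to force it to have degree one (Lemmas \ref{lemma smoothness} and \ref{P^H covering}), or by a monodromy-and-marking argument on the Torelli space using Theorem \ref{Global Torelli theorem}, which is close in spirit to your third paragraph. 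Your descent argument is sound once injectivity of $\Phi^{H}$ is available; indeed, if you are willing to quote that $\Psi^{H}$ is a biholomorphism (the fact underlying Theorem \ref{intr-pseudoconvex}), then $\Phi^{H}$ is injective at once and your second paragraph can be discarded, but as written that paragraph does not close the gap.
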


More applications of affine structures on the Teichm\"uller and Torelli spaces can be found in \cite{LS1}.

The paper is divided into the following sections, which we will describe briefly. In Section \ref{Hodge-Teich}  we review the basics of Hodge theory and introduce the
definitions of various moduli spaces and period maps. In Section \ref{local Torelli}, we first define the notion of strong local Torelli. Then we discuss various examples and
study the explicit forms of the Hodge subbundle $\mathcal{H}$ as required in \eqref{int local assumption} for them.  In Section \ref{compactifications}  we discuss the three
completions of the moduli spaces and prove their equivalence. Extended period maps are also introduced in this section.

In Section \ref{bounded periods}, we review certain results in \cite{LS}, which include the boundedness of the images of  the period maps and the existence of affine structures
on the Teichm\"uller spaces for polarized manifolds satisfying strong local Torelli. In Section \ref{global Torelli}, we prove the main result of this paper, Theorem \ref{intr
main}, and apply it to some  examples  in Corollary \ref{intr case 1} and Corollary \ref{intr case 3}. We also prove Theorem \ref{intr-pseudoconvex} at the end of this section.
Section \ref{general} contains the proof of Theorem \ref{intr generic} as an application of the main result of this paper.

\vspace{0.5cm}
\textbf{Acknowledgement.}
We thank many colleagues and friends for many fruitful discussions. The research of K. Liu is supported by an NSF grant.


\section{Moduli spaces and Hodge theory}\label{Hodge-Teich}
In this section, we introduce the notions of moduli spaces, level $m$ structure,  Teichm\"uller spaces and Torelli spaces. We also review Hodge theory and define the period maps from the moduli spaces, Teichm\"uller spaces and Torelli spaces. We present the review partially as given in \cite{LS} for the reader's convenience. All the results in this section are standard and well-known. For example, one can refer to \cite{Popp} for the knowledge of moduli space, and \cite{KM}, \cite{SU} for the knowledge of deformation theory.

\subsection{Analytic families}\label{Teich}

Let $X$ be a projective manifold with an ample line bundle $L$. We call the pair $(X,L)$ a polarized manifold.

An analytic family of polarized manifolds is a proper morphism $f :\, \X\to S$ between complex analytic spaces with the following properties
\begin{itemize}
\item[(1)] the complex analytic spaces $\X$ and $S$ are smooth and connected, and the morphism $f$ is nondegenerate, i.e. the tangent map $df$ is of maximal rank at each point of $\X$;
\item[(2)] there is a line bundle $\mathcal{L}$ over $\X$;
\item[(3)] $(X_s=f^{-1}(s),L_s=\mathcal{L}|_{X_s})$ is a polarized manifold for any $s\in S$.
\end{itemize}

An analytic family $f:\, \X\to S$ is called universal at a point $s\in S$ if the following two conditions are satisfied:
\begin{itemize}
\item[(1)] Given an analytic family $f':\, \X'\to S'$ with $s'\in S'$ such that $\iota:\, f'^{-1}(s')\to f^{-1}(s)$ is biholomorphic, then there exists a neighborhood $U\subseteq S'$ around $s'$ and holomorphic maps $g :\, U\to S$ and $h :\, f'^{-1}(U)\to \X$ such that the following diagram commutes
\[ \begin{CD}
f'^{-1}(U) @>h>> \X \\
@V{f'} VV @Vf VV \\
U @>g >> S
\end{CD} \]
where $g(s')=s$, and for any $t'\in U$ with $t=g(t')$, the restricted map $$h|_{f'^{-1}(t')}:\, f'^{-1}(t')\to f^{-1}(t)$$ is biholomorphic with $h|_{f'^{-1}(s)}$ identified with the biholomorphic map $\iota$.
\item[(2)] The map $g$ is uniquely determined.
\end{itemize}

A universal family is an analytic family $f:\, \X \to S$ which is universal at every point in $S$.


\subsection{Moduli, Teichm\"uller and Torelli spaces}\label{moduli and Teichmuller}
Let $(X,L)$ be a polarized manifold.
The moduli space $\M$ of polarized manifolds is the complex analytic space parameterizing the isomorphism class of polarized manifolds with the isomorphism defined by
$$(X,L)\sim (X',L')\EQ \exists \ \text{biholomorphic map} \ f : X\to X' \ \text{s.t.} \ f^*L'=L .$$

We fix a lattice $\Lambda$ with a pairing $Q_{0}$, where $\Lambda$ is isomorphic to $H^n(X_{0},\mathbb{Z})/\text{Tor}$ for some $X_{0}$ in $\M$ and $Q_{0}$ is defined by the cup-product.
For a polarized manifold $(X,L)\in \M$, we define a marking $\gamma$ as an isometry of the lattices
\begin{equation}\label{marking}
\gamma :\, (\Lambda, Q_{0})\to (H^n(X,\mathbb{Z})/\text{Tor},Q).
\end{equation}

For any integer $m\geq 3$, we follow the definition of Szendr\"oi \cite{sz}
 to define an $m$-equivalent relation of two markings on $(X,L)$ by
$$\gamma\sim_{m} \gamma' \text{ if and only if } \gamma'\circ \gamma^{-1}-\text{Id}\in m \cdot\text{End}(H^n(X,\mathbb{Z})/\text{Tor}),$$
and denote $[\gamma]_{m}$ to be the set of all the $m$-equivalent classes of $\gamma$.
Then we call $[\gamma]_{m}$ a level $m$
structure on the polarized manifold $(X,L)$.

Two polarized manifolds with level $m$ structure $(X,L,[\gamma]_{m})$ and $(X',L',[\gamma']_{m})$ are said to be isomorphic, or equivalent, if there exists a biholomorphic map $f : X\to X'$ such that
$$f^*L'=L \text{ and } f^*\gamma'\sim_{m} \gamma,$$
where $f^*\gamma'$ is given by $$\gamma':\, (\Lambda, Q_{0})\to (H^n(X',\mathbb{Z})/\text{Tor},Q)$$ composed with the induced map
$$f^*:\, (H^n(X',\mathbb{Z})/\text{Tor},Q)\to (H^n(X,\mathbb{Z})/\text{Tor},Q).$$
We denote by $[M, L, [\gamma]_{m}]$ the isomorphism class of the polarized manifolds with level $m$ structure $(M, L, [\gamma]_{m})$.

The moduli space of polarized manifolds with level $m$ structure is the analytic space which parameterizes the isomorphism class of polarized manifolds with level $m$ structure, where $m\ge 3$.
Let $\mathscr{L}_{m}$ be the moduli space of polarized manifolds with level $m$ structure, $m\ge 3$, which contains the given polarized manifold $(X,L)$.

\begin{definition}\label{T-class} A polarized manifold  $(X,L)$ is said to belong to the T-class, if the irreducible component $\Z$ of $\mathscr{L}_{m}$ containing $(X,L)$ is a
complex manifold, on which there is an analytic family $f_{m}:\,\U_{m}\to \Z$ for all $m\ge m_{0}$, where $m_{0}\ge 3$ is some integer.
\end{definition}
In this case, we may simply take $m_{0}=3$ without loss of generality.

In this paper we only consider the polarized manifold $(X,L)$ belonging to the T-class, and the smooth moduli space $\Z$ for $m\geq 3$. Our purpose of introducing this notion is for us to  work on the smooth covers of the moduli spaces.


Let $\T^m$ be the universal covering of $\Z$ with covering map $\pi_m:\, \T^m\to \Z$. Then we have an analytic family $g_m:\, \V^m \to \T^m$ such that the following diagram is cartesian
\[ \begin{CD}
\V^m @> >> \U_m \\
@V Vg_mV  @V Vf_mV \\
\T^m @> >> \Z
\end{CD} \]
i.e. $\V^m=\U_m \times_{\Z}\T^m$. The family $g_m$ is called the pull-back family.

The proof of the following lemma is obvious.
\begin{lemma}
The space $\T^m$ is a connected complex manifold with the pull-back family $g_m:\, \V^m \to \T^m$.
\end{lemma}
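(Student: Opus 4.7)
The plan is to verify the three claims of the lemma in turn, each by a direct appeal to standard facts about universal covers and pullbacks in the analytic category.

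First, for connectedness of $\T^m$: by the T-class hypothesis, $\Z$ is an irreducible complex manifold, hence connected; since the universal covering of any path-connected topological space is itself path-connected, $\T^m$ is connected.

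Second, for the complex manifold structure on $\T^m$: the covering map $\pi_m:\,\T^m\to\Z$ is a local homeomorphism, so we can pull the complex atlas of $\Z$ back along $\pi_m$ to obtain a complex atlas on $\T^m$. The transition functions, being compositions of biholomorphisms with local inverses of $\pi_m$, are again biholomorphic, so $\T^m$ becomes a complex manifold and $\pi_m$ becomes a local biholomorphism of the same dimension as $\Z$.

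Third, for the pull-back family $g_m:\,\V^m\to\T^m$: I would verify the three defining conditions of an analytic family listed in Section \ref{Teich}. Because $\pi_m$ is a local biholomorphism, the base change $\V^m=\U_m\times_\Z\T^m\to\U_m$ is also a local biholomorphism, so $\V^m$ inherits a smooth complex structure from $\U_m$; nondegeneracy of $g_m$ follows from nondegeneracy of $f_m$ since tangent maps pull back. Connectedness of $\V^m$ follows from connectedness of $\T^m$ together with connectedness of the fibers (each fiber of $g_m$ over $t\in\T^m$ is canonically identified with the projective manifold $X_{\pi_m(t)}$, which is connected). The line bundle $\mathcal{L}$ on $\U_m$ pulls back to a line bundle on $\V^m$, and its restriction to each fiber is canonically identified with $L_{\pi_m(t)}$, an ample line bundle, giving the required polarized structure on each fiber.

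There is no real obstacle here; the only mild subtlety is ensuring that $\V^m$ is connected, which follows from the elementary observation that a fiber bundle (in the topological sense) with connected fibers over a connected base has connected total space. All other verifications are immediate transfers of properties across the local biholomorphism $\V^m\to\U_m$ induced by the pullback square.
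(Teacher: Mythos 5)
Your argument is correct and is exactly the standard verification that the paper itself omits, stating only that "the proof of the following lemma is obvious": connectedness of the universal cover, the pulled-back complex atlas, and the transfer of the analytic-family conditions across the fiber product. The only point you rightly flag, connectedness of $\V^m$, is handled as you say since each fiber of $g_m$ is a (connected) polarized manifold identified with a fiber of $f_m$.
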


The following lemma proves that the space $\T^m$ is independent of the level $m$.
\begin{lemma}\label{independent of m}
The space $\T^m$ does not depend on the choice of $m$. From now on, we simply denote $\T^m$ by $\T$, the analytic family by $g:\, \V \to \T$ and the covering map by $\pi_m:\, \T\to \Z$.
\end{lemma}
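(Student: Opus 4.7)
The plan is to exhibit, for divisibility-related levels, natural covering maps between the corresponding moduli spaces, and then invoke the standard fact that the universal cover of a covering space of $\Z_m$ is canonically identified with the universal cover of $\Z_m$ itself. The compatibility of the pullback families will then follow from the universal property of the fiber product.

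First I would handle the case $m \mid m'$ with $m, m' \geq 3$. From the definition of the equivalence relation $\sim_m$, the inclusion $m' \cdot \mathrm{End}(H^n(X,\mathbb Z)/\mathrm{Tor}) \subset m \cdot \mathrm{End}(H^n(X,\mathbb Z)/\mathrm{Tor})$ gives the implication $\gamma \sim_{m'} \gamma' \Longrightarrow \gamma \sim_m \gamma'$. Hence every level $m'$ structure $[\gamma]_{m'}$ canonically determines a level $m$ structure $[\gamma]_m$, and this induces a well-defined forgetful morphism
\begin{equation*}
p_{m',m}\colon \Z_{m'} \longrightarrow \Z_{m},\qquad [X,L,[\gamma]_{m'}] \longmapsto [X,L,[\gamma]_{m}].
\end{equation*}
I would then check that $p_{m',m}$ is a covering map: the fiber over $[X,L,[\gamma]_m]$ is precisely the finite set of $\sim_{m'}$-classes inside one $\sim_m$-class, on which the finite group $m\cdot\mathrm{End}/m'\cdot\mathrm{End}$ acts simply transitively. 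Since both $\Z_m$ and $\Z_{m'}$ are complex manifolds (by the T-class assumption) and the map is a finite \'etale quotient locally, it is an honest covering map.

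Next, by the standard fact that the universal cover of a covering space coincides with the universal cover of the base, the universal covers $\T^m$ and $\T^{m'}$ are canonically biholomorphic, and the covering maps factor as
\begin{equation*}
\T^{m'} \xrightarrow{\ \sim\ } \T^m \xrightarrow{\ \pi_m\ } \Z_m,\qquad \pi_{m'} = p_{m',m}\circ \pi_m.
\end{equation*}
For arbitrary $m, m' \geq 3$ I would apply the above to $m'' = m m'$ (which is divisible by both $m$ and $m'$), obtaining canonical biholomorphisms $\T^{m''} \cong \T^m$ and $\T^{m''} \cong \T^{m'}$, whence $\T^m \cong \T^{m'}$.

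It remains to check that the pullback families are independent of $m$ under this identification. By construction $f_{m'}\colon \U_{m'} \to \Z_{m'}$ and $f_m\colon \U_m \to \Z_m$ both have, over a point $[X,L,[\gamma]_{m}]$ or $[X,L,[\gamma]_{m'}]$, the same fiber $X$ (the level structure plays no role in the underlying family), so $\U_{m'}$ is canonically identified with $\U_m \times_{\Z_m}\Z_{m'}$. Pulling back to $\T = \T^m = \T^{m'}$ gives
\begin{equation*}
\V^{m'} = \U_{m'}\times_{\Z_{m'}} \T \;\cong\; (\U_m\times_{\Z_m}\Z_{m'})\times_{\Z_{m'}}\T \;\cong\; \U_m\times_{\Z_m}\T = \V^m,
\end{equation*}
and the analytic family $g\colon \V \to \T$ is canonically defined. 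The only mildly subtle point to verify, and the step I would spend most care on, is that $p_{m',m}$ is indeed a covering map: one must check that the forgetful identification really is locally trivial, which in turn comes down to the fact that the relevant automorphism/monodromy action on markings descends to a free action of the finite group $(m\cdot\mathrm{End})/(m'\cdot\mathrm{End})$ on each fiber, a fact that uses the smoothness of $\Z_m$ (and hence the faithfulness of the action) guaranteed by the T-class hypothesis.
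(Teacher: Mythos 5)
Your argument is essentially the paper's first proof: relate two levels through the common level $mm'$ (equivalently, through divisibility), observe that the higher-level moduli space covers the lower-level one compatibly with the analytic families, and conclude that the universal covers and pull-back families coincide — the only difference being that the paper simply cites Popp and Szendr\"oi for the covering and pull-back compatibility facts that you sketch directly. One small correction to your sketch: since markings are isometries, $\gamma'\circ\gamma^{-1}$ lies in $\mathrm{Aut}(H_{\mathbb{Z}},Q)$, so the group acting simply transitively on the level $m'$ refinements of a fixed level $m$ structure is the quotient $\Gamma(m)/\Gamma(m')$ of principal congruence subgroups of $\mathrm{Aut}(H_{\mathbb{Z}},Q)$, not the additive group $m\cdot\mathrm{End}/m'\cdot\mathrm{End}$ (which does not act on markings); this does not affect the covering-space conclusion or the rest of your argument.
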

\begin{proof}  We give two proofs of this lemma.
The first proof  uses the construction of moduli space with level $m$ structure, see Lecture 10 of \cite{Popp}, or pages 692 -- 693 of \cite{sz}.

Let $m_1$ and $m_2$ be two different integers, and
$$f_{m_{1}}:\,\U_{m_1}\to \mathcal{Z}_{m_1}, \ f_{m_{2}}:\,\U_{m_2}\to \mathcal{Z}_{m_2}$$ be two analytic families with level $m_1$ structure and level $m_2$ structure respectively.
Let $\T^{m_{1}}$ and $\T^{m_{2}}$ be the universal covering space of $\mathcal{Z}_{m_1}$ and $\mathcal{Z}_{m_2}$ with the pull back family
$$g_{m_{1}}:\,\V^{m_1}\to \mathcal{\T}^{m_1}, \ g_{m_{2}}:\,\V^{m_2}\to \mathcal{\T}^{m_2}$$
of $f_{m_{1}}$ and $f_{m_{2}}$ respectively.
Let $m=m_1m_2$ and consider the analytic family $$f_{m}:\,\U_{m}\to \mathcal{Z}_{m}.$$
From the discussion in Page 130 of \cite{Popp} or 692 -- 693 of \cite{sz}, we know that $\mathcal{Z}_{m}$ is a covering space of both $\mathcal{Z}_{m_1}$ and $\mathcal{Z}_{m_2}$, and the analytic family $f_{m}$ over $\Z$ is the pull-back family of both $f_{m_{1}}$ and $f_{m_{2}}$ via the corresponding covering maps.

Let $\T$ be the universal covering space of $\mathcal{Z}_{m}$ with the pull-back family
$$g:\,\V\to \mathcal{\T}$$ of $f_{m}$. Since $\mathcal{Z}_{m}$ is a covering space of both $\mathcal{Z}_{m_1}$ and $\mathcal{Z}_{m_2}$, we conclude that $\T$ is the universal cover of both $\mathcal{Z}_{m_1}$ and $\mathcal{Z}_{m_2}$, i.e. $$\T^{m_1}\simeq \T^{m_2} \simeq \mathcal{T},$$
and that the analytic family $g$ is identified with the analytic families $g_{m_{1}}$ and $g_{m_{2}}$ via the above isomorphisms.

If the analytic family $f_m:\, \U_m \to \Z$ is universal, as defined in Section \ref{Teich}, then we have a second proof.

 Let $m_1, m_2$ be two different integers
$\ge3$, and let $\T^{m_1}$ and $\T^{m_2}$ be the corresponding two
Teichm\"uller spaces with the universal families $$g_{m_1}:\,
\V^{m_1} \to \T^{m_1} ,\  g_{m_2}:\, \V^{m_2} \to \T^{m_2}$$
respectively. Then for any point $p\in \T^{m_1}$ and the fiber
$X_p=g_{m_1}^{-1}(p)$ over $p$, there exists $q\in \T^{m_2}$ such
that $Y_q=g_{m_2}^{-1}(q)$ is biholomorphic to $X_p$. By the
definition of universal family, we can find a local neighborhood
$U_p$ of $p$ and a holomorphic map $h_p:\, U_p\to \T^{m_2}$,
$p\mapsto q$ such that the map $h_p$ is uniquely determined.

Since $\T^{m_1}$ is simply-connected, all the local holomorphic maps $$\{ h_p:\, U_p\to \T^{m_2},\, p\in \T^{m_1}\}$$ patches together to give a global holomorphic map $h:\,
\T^{m_1}\to \T^{m_2}$ which is well-defined. Moreover $h$ is unique since it is unique on each local neighborhood of $\T^{m_1}$. Similarly we have a holomorphic map $h':\,
\T^{m_2}\to \T^{m_1}$ which is also unique. Then $h$ and $h'$ are inverse to each other by the uniqueness of $h$ and $h'$. Therefore $\T^{m_1}$ and $\T^{m_2}$ are
biholomorphic.
\end{proof}

Due to the above lemma, we can denote $\T=\T^m$ for any $m\ge 3$, and call $\T$ the Teichm\"uller space of polarized manifolds.

We  define the Torelli space $\mathcal{T}'$ to be  an irreducible component of the complex analytic space consisting of biholomorphically equivalent triples of $(X, L,
\gamma)$, where $\gamma$ is a marking defined in \eqref{marking}. To be more precise, for two triples $(X, L, \gamma)$ and $(X', L', \gamma')$, if there exists a biholomorphic
map $$f:\,X\to X'$$ such that $f^*L'=L$ and
\begin{align}\label{marking identify}
f^*\gamma' =\gamma,
\end{align}
where $f^*\gamma'$ is given by $$\gamma':\, (\Lambda, Q_{0})\to (H^n(X',\mathbb{Z})/\text{Tor},Q)$$ composed with
$$f^*:\, (H^n(X',\mathbb{Z})/\text{Tor},Q)\to (H^n(X,\mathbb{Z})/\text{Tor},Q),$$
then $[X, L, \gamma]=[X', L', \gamma']\in \mathcal{T}'$, where $[X, L, \gamma]$ is the equivalent class of the triple $(X, L, \gamma)$.

To summarize, we have the definition of the Torelli space $\T'$ as follows.
\begin{definition}
The Torelli space $\T'$  is an irreducible component of the moduli space of marked and polarized manifolds containing $(X, L)$ in the T-class.
\end{definition}

By construction, the Torelli space $\T'$ is a covering space of $\mathcal{Z}_m$, with the natural covering map $\pi'_m:\, \mathcal{T}'\rightarrow\mathcal{Z}_m$ given by
$$[X, L, \gamma]\mapsto [X, L, [\gamma]_{m}].$$
Then we have the pull-back family $g':\, \mathcal{U}'\rightarrow \mathcal{T}'$ of $f_{m}:\, \mathcal{U}_{m}\rightarrow \mathcal{Z}_m$.

\begin{proposition}\label{imp}
For the polarized manifold $(X, L)$ in the T-class,  the Torelli space $\mathcal{T}'$ containing $(X, L)$ is a connected complex
manifold.
\end{proposition}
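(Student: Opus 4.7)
The plan is to verify two properties of $\T'$ separately: connectedness and smoothness as a complex manifold.

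Connectedness is immediate from the definition: $\T'$ is by construction an irreducible component of the moduli space of marked polarized manifolds, and irreducibility forces the underlying topological space to be connected. So the real content lies in the complex manifold structure.

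For the smooth complex structure, the strategy is to show that the natural forgetful map $\pi'_m:\, \T'\to \Z$, sending $[X,L,\gamma]\mapsto [X,L,[\gamma]_m]$, is a topological covering map, and then to transport the complex structure from $\Z$ (which is a complex manifold by the T-class hypothesis). To verify the covering property, I would fix a point $p=[X,L,[\gamma]_m]\in \Z$, choose a small contractible open neighborhood $U$ of $p$, and use that the family $f_m:\,\U_m\to \Z$ restricts to a topologically trivial family over $U$. Then the local system $R^n(f_m)_*\mathbb{Z}/\text{Tor}$ is trivial on $U$, and parallel transport via the Gauss--Manin connection identifies markings of the central fiber with markings of each nearby fiber in a canonical way. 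This identification exhibits $(\pi'_m)^{-1}(U)$ as a disjoint union of sheets over $U$, one for each marking $\gamma'$ whose $m$-class equals $[\gamma]_m$ (modulo fiberwise biholomorphisms). For $m\ge 3$, the standard rigidity fact already used in constructing $\Z$ as a fine moduli space --- namely, that any automorphism of the polarized pair $(X,L)$ fixing a level-$m$ structure must be the identity --- ensures that this action is free and the sheets map homeomorphically onto $U$.

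Granting that $\pi'_m$ is a covering map over the smooth complex manifold $\Z$, we equip $\T'$ with the unique complex manifold structure for which $\pi'_m$ is a local biholomorphism; combined with the connectedness noted above, this gives the proposition. The main obstacle I expect is the careful bookkeeping of the fiberwise automorphism action: one needs to confirm both that this action is compatible with parallel transport of markings and that for $m\ge 3$ it is trivial on each $m$-equivalence class, so that different markings in $[\gamma]_m$ genuinely produce distinct sheets and the resulting covering is separated and Hausdorff. Once this is handled, the rest is formal.
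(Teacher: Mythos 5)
Your proof follows essentially the same route as the paper: connectedness comes from $\T'$ being an irreducible component, and the complex manifold structure is transported along the natural map $\pi'_m:\,\T'\to\Z$, which the paper simply asserts to be a covering map ``by construction'' onto the smooth connected $\Z$ guaranteed by the T-class hypothesis. Your extra verification of the covering property via local topological triviality, parallel transport of markings, and level-$m$ rigidity (Serre's lemma for $m\ge 3$) just fills in details the paper leaves implicit, so the two arguments agree in substance.
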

\begin{proof}One notices that there is a natural covering map $\pi'_m: \,\mathcal{T}'\rightarrow \mathcal{Z}_m$ for any $m\geq 3$
according to the definition of $\mathcal{T}'$. Therefore $\mathcal{T}'$ is a connected complex manifold, as $\mathcal{Z}_m$ is a connected complex manifold by the definition of
T-class.
\end{proof}

By the universal property of the universal covering space, we know that the Teichm\"uller space $\T$ is also the universal cover of the Torelli space $\T'$.


\subsection{Variation of Hodge structure}\label{Hodge}
Let $(X,L)$ be polarized manifold defined in Section \ref{Teich} with $\dim_\C X=n$.
The $n$-th primitive cohomology groups $H_{pr}^n(X,\C)$ of $X$ is defined as
$$H_{pr}^n(X,\C)=\ker\{L:\, H^n(X,\C)\to H^{n+2}(X,\C)\},$$
where the action of $L$ on the cohomology group is defined by the wedge product with the first Chern class of $L$.

Let us denote
$$H_\mathbb{Z}=H_{pr}^n(X)\cap H^n(X,\mathbb{Z})$$
and denote
$$H_{pr}^{k,n-k}(X)=H^{k,n-k}(X)\cap H_{pr}^n(X,{\mathbb{C}})$$
with its complex dimension denoted by $h^{k,n-k}$.
Since $L$ is defined over $\mathbb{Z}$, we have that $$H_{pr}^n(X,\C)=H_\mathbb{Z}\otimes_{\mathbb{Z}} \C.$$
We then have the Hodge decomposition
\begin{align}  \label{hod}
H_{pr}^n(X,{\mathbb{C}})=H_{pr}^{n,0}(X)\oplus\cdots\oplus
H_{pr}^{0,n}(X),
\end{align}
such that $H_{pr}^{n-k,k}(X)=\bar{H_{pr}^{k,n-k}(X)}$.
Therefore the data $$\{H_\mathbb{Z},H_{pr}^{k,n-k}(X)\}$$ is a Hodge structure of weight $n$.

On this Hodge structure there is a bilinear form
$$Q:\, H_\mathbb{Z}\times H_\mathbb{Z} \to \mathbb{Z}$$
defined by
\begin{equation*}
Q(u,v)=(-1)^{\frac{n(n-1)}{2}}\int_X u\wedge v
\end{equation*}
for any $d$-closed $n$-forms $u,v$ on $X$.
It is well-known that $Q $ is nondegenerate and can be extended to
$H_{pr}^n(X,{\mathbb{C}})$ bilinearly. Moreover, it also satisfies the
Hodge-Riemann relations
\begin{eqnarray}
Q\left ( H_{pr}^{n-k,k}(X), H_{pr}^{n-l,l}(X)\right )=0\text{ unless }k+l=n;\label{HR1}\\
\left (\sqrt{-1}\right )^{2k-n}Q\left ( v,\bar v\right )>0\text{ for }v\in H_{pr}^{k,n-k}(X)\setminus\{0\}. \label{HR2}
\end{eqnarray}

Let $f^k=\sum_{i=k}^nh^{i,n-i}$, $m=f^0$, and
$$F^k=F^k(X)=H_{pr}^{n,0}(X)\oplus\cdots\oplus H_{pr}^{k,n-k}(X),$$
from which we have the decreasing filtration
$$H_{pr}^n(X,\C)=F^0\supset\cdots\supset F^n.$$ We know that
\begin{align}
& \dim_{\mathbb{C}} F^k=f^k, \label{periodcondition} \\
& H^n_{pr}(X,{\mathbb{C}})=F^{k}\oplus \bar{F^{n-k+1}},\text{ and }H_{pr}^{k,n-k}(X)=F^k\cap\bar{F^{n-k}}.\nonumber
\end{align}
In terms of Hodge filtrations, the Hodge-Riemann relations \eqref{HR1} and \eqref{HR2} are
\begin{align}
& Q\left ( F^k,F^{n-k+1}\right )=0;\label{HR1'}\\
& Q\left ( Cv,\bar v\right )>0\text{ if }v\ne 0,\label{HR2'}
\end{align}
where $C$ is the Weil operator given by $$Cv=\left (\sqrt{-1}\right
)^{2k-n}v\text{ for }v\in H_{pr}^{k,n-k}(X).$$ The period domain $D$
for polarized Hodge structures is defined by the space of all such Hodge filtrations
\begin{equation*}
D=\left \{ F^n\subset\cdots\subset F^0=H_{pr}^n(X,{\mathbb{C}})\mid
\eqref{periodcondition}, \eqref{HR1'} \text{ and } \eqref{HR2'} \text{ hold}
\right \}.
\end{equation*}
The compact dual $\check D$ of $D$ is defined by
\begin{equation*}
\check D=\left \{ F^n\subset\cdots\subset F^0=H_{pr}^n(X,{\mathbb{C}})\mid
\eqref{periodcondition} \text{ and } \eqref{HR1'} \text{ hold} \right \}.
\end{equation*}
The compact dual $\check D$ is a projective manifold and the period domain $D\subseteq \check D$ is an open submanifold.

From the definition of period domain we naturally get the Hodge bundles on $\check{D}$, by associating to each point in $\check{D}$ the vector spaces $\{F^k\}_{k=0}^n$ in the Hodge filtration of that point. We will denote
the Hodge bundles by $\mathscr{F}^k \to \check{D}$ with $\mathscr{F}^k|_{p}=F^k_{p}$ as the fiber for any $p\in \check{D}$ and each $0\leq k\leq n$.

For the family $f_m : \U_m \to \Z$, we denote each fiber by $$[X_s,L_{s},[\gamma_{s}]_{m}]=f_m^{-1}(s)$$ and $F^k_s=F^k(X_{s})$ for $0\le k\le n$.  With some fixed point
$s_0\in \Z$, the period map is defined as a morphism $$\Phi_{\Z} :\Z \to D/\Gamma$$ by
\begin{equation}\label{perioddefinition}
s \mapsto \tau^{[\gamma_s]}(F^n_s\subseteq\cdots\subseteq F^0_s)\in D,
\end{equation}
where $F_s^k=F^k(X_s)$ and $\tau^{[\gamma_s]}$ is an isomorphism between $\C-$vector spaces
$$\tau^{[\gamma_s]}:\, H^n(X_s,\mathbb{C})\to H^n(X_{s_0},\mathbb{C}),$$
which depends only on the homotopy class $[\gamma_s]$ of the curve $\gamma_s$ between $s$ and $s_0$.

Recall that the monodromy group $\Gamma$ is the image of representation of the fundamental group of $\Z$ in $\text{Aut}(H_{\mathbb{Z}},Q)$, the automorphism group of $H_{\mathbb Z}$ preserving $Q$. Then the period map from $\Z$ is well-defined with respect to the monodromy representation
$$\rho : \pi_1(\Z)\to \Gamma \subseteq \text{Aut}(H_{\mathbb{Z}},Q).$$It is well-known that the period map has the following properties:
\begin{enumerate}
\item locally liftable;
\item holomorphic, i.e. $\partial F^i_z/\partial \bar{z}\subset F^i_z$, $0\le i\le n$;
\item Griffiths transversality: $\partial F^i_z/\partial z\subset F^{i-1}_z$, $1\le i\le n$.
\end{enumerate}

Let $X$ be any polarized manifold with local coordinate chart $(U; w)$, and $\theta\in H^{1}(X,\Theta_{X})$ with the local representation $$\theta=\sum_{i\bar{j}}\theta_{i\bar{j}}  \frac{\d}{\d w^{i}}\otimes d\bar{w}^{j}$$ under the identification
$$H^{1}(X,\Theta_{X})\simeq \frac{Z^{0,1}(X, \mathrm{T}^{1,0}X)}{\bar{\d}(A^{0}(X, \mathrm{T}^{1,0}X))},$$
where $Z^{0,1}(X, \mathrm{T}^{1,0}X)$ denotes the space of all the closed $(0,1)$ forms on $X$ with values in $\mathrm{T}^{1,0}X$, and $A^{0}(X, \mathrm{T}^{1,0}X)$ denotes the space of the smooth sections of $\mathrm{T}^{1,0}X$ on $X$.

For any $(p, q)$ form $\alpha=\sum_{|I|=p,|J|=q}a_{I,\bar{J}}dw^{I}\wedge d\bar{w}^{J}$, the contraction map $\theta\lrcorner$ is given by
\begin{align}\label{contraction}
\theta\lrcorner\alpha=\sum_{k=1}^{p}\sum_{i_{1},\cdots,i_{p} \atop |J|=q,\bar{j}}(-1)^{k-1}\theta_{i_{k}\bar{j}}a_{i_{1},\cdots,i_{p},\bar{J}}d\bar{w}^{j}\wedge dw^{i_{1}}\cdots \widehat{dw^{i_{k}}} \cdots  dw^{i_{p}} \wedge d\bar{w}^{J}.
\end{align}
Hence the contraction map $\theta\lrcorner$ maps $H^{p,q}(X)$ to $H^{p-1,q+1}(X)$.

From \cite{Griffiths2}, we know that the derivative of the period map is precisely given by the contraction $KS(v)\lrcorner$, which is a linear  map. Here
$$KS: \, \mathrm{T}^{1,0}_{q}\Z\to H^{1}(X_{q},\Theta_{X_{q}})$$
is the Kodaira-Spencer map and $v\in \mathrm{T}_{q}\Z$ is a tangent vector at any point $q\in \Z$.

By (1) of the properties of the period map as above, we can lift the period map onto the universal cover $\T$ of $\Z$, to get the lifted period map $\Phi :\, \T \to D$ such that the diagram
$$\xymatrix{
\T \ar[r]^-{\Phi} \ar[d]^-{\pi_m} & D\ar[d]^-{\pi_{D}}\\
\Z \ar[r]^-{\Phi_{\Z}} & D/\Gamma
}$$
is commutative.

From the definition of marking in \eqref{marking}, we have a well-defined period map $\Phi':\, \T'\to D$ from the Torelli space $\T'$ by
\begin{equation}\label{defn of P'}
p \mapsto \gamma_{p}^{-1}(F^n_p\subseteq\cdots\subseteq F^0_p)\in D,
\end{equation}
where the triple $[X_{p}, L_{p}, \gamma_{p}]$ is the fiber over $p\in \T'$ of the analytic family $\mathcal{U}'\to \T'$, and the marking $\gamma_{p}$ is an isometry from a fixed lattice $\Lambda$ to $H^{n}(X_{p},\mathbb{Z})/\text{Tor}$, which extends $\C$-linearly to an isometry from $H=\Lambda\otimes_{\mathbb{Z}}\C$ to $H^{n}(X_{p},\mathbb{C})$.
Here
$$\gamma_{p}^{-1}(F^n_p\subseteq\cdots\subseteq F^0_p)=\gamma_{p}^{-1}(F^n_p)\subseteq\cdots\subseteq\gamma_{p}^{-1}(F^0_p)=H$$
denotes a Hodge filtration of $H$.

Now we summarize the period maps defined as above in the following commutative diagram
$$\xymatrix{
\T \ar[dr]^-{\pi}\ar[rr]^-{\Phi} \ar[dd]^-{\pi_m} && D\ar[dd]^-{\pi_{D}}\\
&\T'\ar[ur]^-{\Phi'}\ar[dl]_-{\pi'_{m}}&\\
\Z \ar[rr]^-{\Phi_{\Z}} && D/\Gamma,
}$$
where the  maps $\pi_{m}$, $\pi'_{m}$ and $\pi$ are all natural covering maps between the corresponding spaces as discussed in Section \ref{Hodge-Teich}.

 Before closing this section, we prove a lemma concerning the monodromy group $\Gamma$ on $\Z$ for $m\geq 3$.
\begin{lemma}\label{trivial monodromy}
Let $\gamma$ be the image of some element of $\pi_1(\mathcal{Z}_m)$ in $\Gamma$ under the monodromy representation. Suppose that $\gamma$ is finite, then $\gamma$ is trivial. Therefore for $m\geq 3$, we can assume that $\Gamma$ is torsion-free and $D/\Gamma$ is smooth.
\end{lemma}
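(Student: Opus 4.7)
The plan is to reduce the statement to a classical fact about torsion elements in arithmetic groups, namely Serre's (or Minkowski's) lemma. The key observation is that the level $m$ structure is precisely designed so that the monodromy on $\mathcal{Z}_m$ preserves it, which forces the monodromy image to lie in the principal congruence subgroup modulo $m$.

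First I would unpack what it means for $\gamma$ to come from $\pi_1(\mathcal{Z}_m)$. By the definition of the moduli space with level $m$ structure recalled in Section \ref{moduli and Teichmuller}, the marking $[\gamma_s]_m$ varies continuously along any loop in $\mathcal{Z}_m$ and returns to the same $m$-equivalence class. Transported to the fixed lattice $(\Lambda,Q_0)$ via a marking, this means the monodromy transformation $\gamma\in\mathrm{Aut}(H_\mathbb{Z},Q)$ satisfies
\begin{equation*}
\gamma-\mathrm{Id}\in m\cdot\mathrm{End}(H_\mathbb{Z}),
\end{equation*}
i.e. $\gamma\equiv\mathrm{Id}\pmod m$. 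This is exactly the $m$-equivalence relation used to construct $\mathscr{L}_m$.

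Next I would invoke the following classical fact: any torsion element $g\in GL_N(\mathbb{Z})$ with $g\equiv\mathrm{Id}\pmod m$ and $m\geq 3$ is itself the identity. The idea of proof (which I would only sketch, since it is standard) is to reduce to the case that $g$ has prime order $p$, write $g=\mathrm{Id}+m^sB$ with $B\not\equiv 0\pmod m$ and $s\geq 1$, expand $(\mathrm{Id}+m^sB)^p=\mathrm{Id}$, and compare $p$-adic valuations term by term. When $p\nmid m$ the lowest-order term $pm^sB$ forces $B\equiv 0\pmod m$, a contradiction; when $p\mid m$ (so $m\geq 3$ excludes only $m=2$, $p=2$) the same valuation comparison, combined with $m\geq 3$, again yields a contradiction. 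Applying this to our $\gamma\in\mathrm{Aut}(H_\mathbb{Z},Q)\subset GL(H_\mathbb{Z})$ gives $\gamma=\mathrm{Id}$.

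Finally, for the second assertion, since $\Gamma=\rho(\pi_1(\mathcal{Z}_m))$ and we just proved that every finite-order element of $\Gamma$ is trivial, the group $\Gamma$ is torsion-free. Because $\Gamma$ acts properly discontinuously on the period domain $D$ (as a discrete subgroup of the real Lie group of automorphisms preserving $Q$), a torsion-free such action is automatically free, so the quotient $D/\Gamma$ inherits a complex manifold structure from $D$. The main technical point in the whole argument is really just the classical congruence-subgroup lemma; everything else is bookkeeping with the definition of level $m$ structure.
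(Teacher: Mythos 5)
Your proof is correct and follows essentially the same route as the paper: the level $m$ structure forces the monodromy element to satisfy $\gamma\equiv\mathrm{Id}\pmod m$, and then Serre's lemma (which the paper simply cites as \cite{Serre60}, or Lemma 2.4 of \cite{sz}) gives $\gamma=\mathrm{Id}$ for $m\geq 3$. The only cosmetic differences are that the paper phrases the first step via a local lift of the period map to the upper half plane while you argue directly with flat transport of the marking, and that you spell out the sketch of Serre's lemma and the torsion-free-implies-free-action step, both of which the paper leaves to citations or states without proof.
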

\begin{proof}
Let us look at the period map locally as $\Phi_{\mathcal{Z}_m}:\,\Delta^* \to D/\Gamma$. Assume that $\gamma$ is the monodromy  action corresponding to the generator of the
fundamental group of $\Delta^*$.

We lift the period map to $\Phi :\, {\mathbb H}\to D$, where ${\mathbb H}$ is the upper half plane and the covering map from ${\mathbb H}$ to $\Delta^*$ is
$$z\mapsto \exp(2\pi \sqrt{-1} z).$$
Then $\Phi(z+1)=\gamma\Phi(z)$ for any $z\in {\mathbb H}$. Since $\Phi(z+1)$ and $\Phi(z)$ correspond to the same point in $\mathcal{Z}_m$, by the definition of $\mathcal{Z}_m$ we have
$$\gamma\equiv \text{I} \text{ mod }(m).$$
But $\gamma$ is also in $\text{Aut}(H_\mathbb{Z})$, applying Serre's lemma \cite{Serre60} or Lemma 2.4 in \cite{sz}, we have $\gamma=\text{I}$.
\end{proof}


\section{Strong local Torelli and Examples}\label{local Torelli}

In this section, we first introduce the notion of strong local Torelli, which is given in Definition \ref{local-Torelli} below. In particular we remark that the identification \eqref{local assumption} below of the tangent bundle of the Teichm\"uller space to certain Hodge subbundle is crucial for our proof of global Torelli theorem from strong local Torelli.

Then we give some examples including hyperk\"ahler manifolds, Calabi--Yau manifolds, some hypersurfaces in projective spaces or weighted projective spaces, and some complete intersections in complex projective space.
They all satisfy strong local Torelli, and have explicit forms of the Hodge subbundle $\mathcal{H}$ in \eqref{local assumption}.
We also introduce the notion of eigenperiod map to get more nontrivial examples admitting Hodge subbundles in \eqref{local assumption}, which contain
the arrangements of hyperplanes in $\mathbb{P}^{n}$, cubic surface and cubic threefold.

\subsection{Strong local Torelli and examples}

We first introduce a stronger notion of local Torelli, which we will prove to be a sufficient condition for global Torell on the Torelli spaces for projective manifolds in the T-class. Let $\Z$ be the smooth component of moduli space of polarized manifolds with  level $m$ structure containing $(X, L)$,  and let $\T$ be the corresponding Teichm\"uller space.

\begin{definition}\label{local-Torelli}
The period map $\Phi:\, \T \to D$ is said to satisfy the strong local Torelli, provided that the local Torelli  holds,  i.e. the tangent map of the period map $\Phi$ is
injective everywhere on $\T$, and furthermore, there exists a holomorphic subbundle $\mathcal{H}$ of the Hodge bundle
$$\bigoplus_{k=1}^{n}\text{Hom}(\mathscr{F}^k/\mathscr{F}^{k+1}, \mathscr{F}^{k-1}/\mathscr{F}^k)$$
on the period domain $D$, such that the tangent map of the period map induces an isomorphism on $\T$ of the holomorphic  tangent bundle $\mathrm{T}^{1,0}\T$ to the Hodge
subbundle $\mathcal{H}$,

\begin{equation}\label{local assumption}
d\Phi:\, \mathrm{T}^{1,0}\T\stackrel{\sim}{\longrightarrow} \mathcal{H}.
\end{equation}
Here we still denote by $\mathcal{H}$ the pull-back of Hodge subbundle on $\T$ for convenience.
\end{definition}

For simplicity we will also say that strong local Torelli holds for the polarized manifold $(X,L)$, if the corresponding period map $\Phi$ satisfies the above definition.

The purpose of introducing the notion of strong local Torelli is for us to extend affine structure to the Hodge metric completion space $\T^H$  to be studied in detail later.

 The following special case of condition  \eqref{local assumption} is interesting, which includes many well-known examples.
\begin{quotation}
There exist holomorphic subbundles $\mathscr{F}^l_{0}$ of $\mathscr{F}^l$, $0\le l\le n$, and there exists some integer $k$ from $0$ to $n$, such that $\mathscr{F}_{0}^l=0$ for any $l>k$ and the holomorphic subbundle $\mathcal{H}$ of Definition \ref{local-Torelli} is of the form
\begin{equation}\label{remark local assumption}
\mathcal{H}=\text{Hom}(\mathscr{F}^k_{0}, \mathscr{F}^{k-1}_{0}/\mathscr{F}_{0}^k).
\end{equation}

\end{quotation}

Now we discuss some examples satisfying the strong local Torelli to illustrate the Hodge subbundle $\mathcal{H}$ of Definition \ref{local-Torelli}.

First note that, from the arguments before Corollary \ref{case 1} and Corollary \ref{case 3} in Section \ref{global Torelli}, one sees that the moduli spaces $\Z$ for $m$ large enough, as well as the Teichm\"uller space $\T$,  are smooth in the following examples. Hence we only need to describe the Hodge subbundle $\mathcal{H}$ locally, or equivalently, from the point of view of deformation theory.

\begin{example}[K3 surfaces and Calabi--Yau manifolds]
Let $M$ be a Calabi--Yau manifold of complex dimension $n\ge 3$, which means that $M$ is a compact projective manifold with a trivial canonical bundle
$$K_{M}=\Omega_{M}^{n}\simeq \mathcal{O}_{M}$$ and satisfies $H^i(M,\mathcal{O}_M)=0$ for $0<i<n$. Then the contraction map induces an isomorphism of the sheaves
$$\Theta_{M}=\mathcal{O}_{M}(\mathrm{T}_{M})\simeq\Omega_{M}^{n}(\mathrm{T}_{M})\simeq \Omega_{M}^{n-1}.$$
Hence $$H^{1}(M,\Theta_{M})\simeq H^{1}(M,\Omega_{M}^{n-1})\simeq H^{n-1,1}(M),$$which together with the unobstructedness of the deformations of compact K\"ahler manifolds with trivial canonical bundle implies that local Torelli theorem holds for Calabi--Yau manifolds such that
$$H^{1}(M,\Theta_{M})\stackrel{\sim}{\longrightarrow} \text{Hom}(F^{n}, F^{n-1}/F^{n}),$$
is an isomorphism.
Hence the subbundle $\mathcal{H}$ of Definition \ref{local-Torelli} exists and is
$$\mathcal{H}=\text{Hom}(\mathscr{F}^n, \mathscr{F}^{n-1}/\mathscr{F}^n).$$
Therefore Calabi--Yau manifolds satisfy Definition \ref{local-Torelli}. In fact, global Torelli theorem on the Torelli space for Calabi--Yau manifolds was already proved in \cite{CLS13}.

Since K3 surfaces are $2$-dimensional analogues of Calabi--Yau manifolds, we can see that K3 surfaces
also satisfy Definition \ref{local-Torelli} by a similar argument.
Global Torelli theorem for K3 surfaces are proved in algebraic and K\"ahlerian cases in many famous papers, see \cite{BR}, \cite{Fri}, \cite{LooPet}, \cite{PSS} and so on.

\end{example}

\begin{example}[Hyperk\"ahler manifolds]
Let $X$ be a hyperk\"ahler manifold, which means that $X$ is an irreducible simply-connected compact K\"ahler manifold such that $H^{0}(X,\Omega_{X}^{2})$ is generated by an
everywhere nondegenerate holomorphic two form $\sigma$.

A direct consequence of the definition of $X$ is that the cohomology $H^{2}(X,\C)$ has the following Hodge decomposition
$$H^{2}(X,\C)=H^{2,0}(X)\oplus H^{1,1}(X)\oplus H^{0,2}(X),$$
such that $H^{2,0}(X)=\C\sigma$ and $H^{0,2}(X)=\C\bar{\sigma}$. Moreover, $X$ has complex dimension $2n$ and the canonical bundle $K_{X}=\Omega_{X}^{2n}$ is trivialized by $\sigma^{n}$. The everywhere nondegenerate holomorphic two form $\sigma$ also induces an isomorphism
$$\sigma :\, \Theta_{X}=\mathcal{O}(\mathrm{T}_{X})\to \Omega_{X}.$$

The unobstructedness of the deformations of compact K\"ahler manifolds with trivial canonical bundles and the isomorphism above imply that the infinitesimal Torelli theorem holds for hyperk\"ahler manifolds such that
$$H^{1}(X,\Theta_{X})\stackrel{\sim}{\longrightarrow} \text{Hom}(F^{2}, F^{1}/F^{2}),$$
is an isomorphism.
Therefore the subbundle $\mathcal{H}$ of Definition \ref{local-Torelli} exists and is $$\mathcal{H}=\text{Hom}(\mathscr{F}^2, \mathscr{F}^{1}/\mathscr{F}^2).$$

\end{example}

\begin{example}[Hypersurfaces in projective spaces]\label{Hypersurfaces}
Let $X$ be a smooth hypersurface of degree $d$ in $\mathbb{P}^{n+1}$, which means that $X$ is an algebraic subvariety of $\mathbb{P}^{n+1}$ determined by a polynomial $F(x_{0},\cdots,x_{n+1})$ with nondegenerate Jacobian.
Define the graded Jacobian quotient ring $R(F)$ by
$$R(F)=\C[x_{0},\cdots,x_{n+1}]\Big/\Big<\frac{\partial F}{\partial x_{0}},\cdots , \frac{\partial F}{\partial x_{n+1}}\Big>.$$
As proved in \cite{Griffiths69}, the primitive cohomology of $X$ is
$$H^{k,n-k}_{pr}(X)\simeq R(F)^{d(n+1-k)-n-2},$$
where $R(F)^{l}$ is the graded piece of degree $l$ of $R(F)$, $l\ge 0$.

From Lemma 5.4.4 and Example 8.1.1 of \cite{CMP}, we have that the infinitesimal Torelli theorem holds for the smooth hypersurface $$X=(F=0)\subset \mathbb{P}^{n+1}$$ of degree
$d$ if the product
$$R(F)^{d}\times R(F)^{d(n+1-k)-n-2}\to R(F)^{d(n+2-k)-n-2}$$
is nondegenerate in the first factor for some $k$ between $1$ and $n$. Macaulay's theorem tells that this product map is nondegenerate in each factor as long as $$0 \le
d(n+2-k)-n-2 \le (n+2)(d-2).$$ Therefore the only exceptions for the infinitesimal Torelli are quadric and cubic curves, and cubic surfaces.

Let us consider the following case. Suppose there exists some integer $1\le k\le n$ such that
\begin{equation}\label{cy hypersurface}
R(F)^{d}\simeq R(F)^{d(n+2-k)-n-2}.
\end{equation}
One sufficient condition of \eqref{cy hypersurface} is that $$d=d(n+2-k)-n-2, \text{ i.e. } d(n+1-k)=n+2,$$ or equivalently $$d|(n+2).$$
Taking infinitesimal Torelli into consideration, we require that $d\le (n+2)(d-2)$, and hence $$d\ge 3.$$
In this case we have $$H^{k-1,n-k+1}_{pr}(X)\simeq R(F)^{d},\
H^{k,n-k}_{pr}(X)\simeq R(F)^{0}\simeq \C,$$
and  $H^{l,n-l}_{pr}(X)=0$ for $l>k$.
Then the Hodge subbundle $\mathcal{H}$ of Definition \ref{local-Torelli} exists and is
$$\mathcal{H}=\text{Hom}(\mathscr{F}^{k}, \mathscr{F}^{k-1}/\mathscr{F}^{k}).$$

We remark that  generic Torelli theorem for smooth hypersurface of degree $d$ in $\mathbb{P}^{n+1}$ satisfying $d|(n+2)$ is still open, except for quintic threefold, i.e. $d=5$ and $n=3$, see \cite{Voisin99}. Still our method gives the  global Torelli theorem on the Torelli space for such class of projective  manifolds.

We now look at some well-known examples satisfying $d|(n+2)$ and $d\ge 3$.

If $n=2$, then the nontrivial case is $d=4$, i.e. the quartic surface, which is a special case of K3 surface.

If $n=3$, then $d=5$, i.e. the quintic threefold, which is a special case of Calabi--Yau threefold.

If $n=4$, then the nontrivial cases are $d=3$ and $d=6$. The former is cubic fourfold and the later is a Calabi--Yau manifold.

\end{example}

Note that the condition $d|(n+2)$, $d\ge 3$ is only one of the sufficient conditions from Definition \ref{local-Torelli} in the case of hypersurfaces. We believe that many more examples of hypersurfaces satisfying Definition \ref{local-Torelli} can be found, just like cubic surfaces and cubic threefolds in Example \ref{cubic} below.

We also remark that we can generalize the consideration in Example \ref{Hypersurfaces} to other cases, including $k$-sheeted branched covering of $\mathbb{P}^{n}$, Veronese double cone and some complete intersections in complex projective space, since there are similar theories studying infinitesimal Torelli theorems in terms of the Jacobian quotient rings in these cases, see, for examples, \cite{Konno91} and \cite{Saito}.

\subsection{Eigenperiod map and examples}\label{eigenperiods}

From the examples above, we can see that in general the canonically polarized hypersurfaces in $\mathbb{P}^{n}$ seem not to satisfy strong local Torelli, since the rank of the Hodge bundles $\text{Hom}(\mathscr{F}^{k}, \mathscr{F}^{k-1}/\mathscr{F}^{k})$, $1\le k\le n$ are too big when compared with the dimension of the Teichm\"uller space $\T$. But the notion of eigenperiod map gives us many new examples in such cases.
Below we will give a brief review of the eigenperiod map. One can refer to \cite{DK} for more details.

Let $\T$ be the Teichm\"uller space of $(X, L)$, and $g:\, \U \to \T$ be the analytic family. Let $G$ be a finite abelian group acting holomorphically on $\U$, preserving the line bundle $\mathcal{L}$ on $\U$. Recall that the line bundle $\mathcal{L}$ defines a polarization $L_{q}=\mathcal{L}|_{X_{q}}$ on $X_{q}=g^{-1}(q)$ for any $q\in \T$.
We assume that $g(X_{q})=X_{q}$ for any $q\in \T$ and $g\in G$.
Fix $p\in \T$ and $o=\Phi(p)\in D$ as the base points. Let $H_{\mathbb{Z}}=H^{n}(X_{p},\mathbb{Z})/\text{Tor}$ with the Poincar\'e paring $Q$. Then the simply-connectedness of $\T$ implies that we can identify $$(H^{n}(X_{q},\mathbb{Z})/\text{Tor},Q)\stackrel{\sim}{\longrightarrow}(H_{\mathbb{Z}},Q)$$ for any $q\in \T$.
Since $G$ preserves the line bundle $\mathcal{L}$ on $\U$, we have a induced action of $G$ on $H_{\mathbb{Z}}$ preserving $Q$, i.e. we have a representation $\rho:\, G\to \text{Aut}(H_{\mathbb{Z}},Q)$.

Let $H=H_{\mathbb{Z}}\otimes \C$.
Define $D^{\rho}$ by
$$D^{\rho}=\{ (F^n\subset\cdots\subset F^0)\in D:\, \rho(g)(F^k)=F^k, 0\le k\le n, \text{ for any }g\in G \}.$$
Let $\chi\in \text{Hom}(G,\C^{*})$ be a character of $G$.
Let
$$H_{\chi}=\{v\in H:\, \rho(g)(v)=\chi(g)v, \forall g\in G\}.$$
For any $(F^n\subset\cdots\subset F^0)\in D^{\rho}$, we define $$F^{k}_{\chi}=F^{k}\cap H_{\chi}\text{ and }H^{k,n-k}_{\chi}=F^{k}\cap \bar{F}^{n-k}\cap H_{\chi},\ 0\le k\le n.$$
Then we have the decomposition
\begin{align}\label{chi decomp}
H_{\chi}=H^{n,0}_{\chi}\oplus \cdots \oplus H^{0,n}_{\chi}.
\end{align}
Note that decomposition \eqref{chi decomp} is a Hodge decomposition if and only if $\chi$ is real.

Although decomposition \eqref{chi decomp} is not Hodge decomposition for general $\chi$, it still has the restricted polarization $Q$ on $H_{\chi}$ such that
\begin{align*}
Q( Cv,\bar v)>0\text{ for any }v \in H_{\chi}\setminus \{0\}.
\end{align*}
Hence the sub-domain $D^{\rho}_{\chi}$ defined by
$$D^{\rho}_{\chi}=\{(F_{\chi}^n\subset\cdots\subset F_{\chi}^0):\,(F^n\subset\cdots\subset F^0)\in D^{\rho}\},$$
has a well-defined metric, which is the restriction of the Hodge metric on $D$.

Since the action of $G$ on any fiber $X_{q}$ is holomorphic and preserves the polarization $L_{q}$ on $X_{q}$, the period map $\Phi:\, \T \to D$ takes values in $D^{\rho}$. Then we define the eigenperiod map by
$$\Phi_{\chi}:\, \T \to D^{\rho}_{\chi},$$
which is the composition of $\Phi$ with the projection map $D^{\rho} \to D^{\rho}_{\chi}$.
We can also define the Hodge subbundles $\mathscr{F}^{k}_{\chi}$, $0\le k\le n$, on $D^{\rho}_{\chi}$, and pull them back via the eigenperiod map to get the Hodge subbundles on $\T$, which are still denoted by $\mathscr{F}^{k}_{\chi}$, $0\le k\le n$.

Historically, the eigenperiod map is an effective way to reduce the dimension of the period domain, so that the eigenperiod map could still be locally isomorphic.
In this paper we will study the the ordinary period maps $\Phi$ and $\Phi'$ on $\T$ and $\T'$ respectively, while the eigenperiod map only serves as a tool to find the Hodge subbundle in \eqref{remark local assumption}.
More precisely, we are interested in the following case, which includes many known examples.
\begin{quotation}
There exist some character $\chi\in \text{Hom}(G,\C^{*})$ and some integer $k$ from $0$ to $n$, such that $\mathscr{F}^l_{\chi}=0$ for any $l>k$, and that the Hodge subbundles
$\mathscr{F}^k_{\chi}, \mathscr{F}^{k-1}_{\chi}$ satisfy \eqref{remark local assumption}, which is equivalent to that the Hodge subbundle $\mathcal{H}$ of Definition
\ref{local-Torelli} is of the form
\begin{equation}\label{eigen local assumption}
\mathcal{H}=\text{Hom}(\mathscr{F}^k_{\chi}, \mathscr{F}^{k-1}_{\chi}/\mathscr{F}_{\chi}^k).
\end{equation}

\end{quotation}

We remark that the global Torelli theorem will be proved in the above case for the original period map $\Phi':\, \T' \to D$, and the global Torelli theorem for the eigenperiod map follows in a similar way.

\begin{example}[Cubic surfaces and cubic threefolds]\label{cubic}
We follow the notations of \cite{Debarre}.
Let $X\subset \mathbb{P}^{3}$ be a smooth cubic surface with defining equation $F(x_{0},x_{1},x_{2}, x_{3})$. Since $H^{2}(X,\C)=H^{1,1}(X)$ and the period map is trivial, we have to consider other ways to define the period map.

In their papers \cite{ACT02}, Allcock, Carlson, and Toledo consider the cyclic triple covering $\tilde{X}$ of $\mathbb{P}^3$ branched along $X$, with $\tilde{X}$ defined by $$\tilde{X}=(F(x_{0},x_{1},x_{2}, x_{3})+x_4^3=0)\subset \mathbb{P}^{4}.$$ The Hodge structure of $\tilde{X}$ is
$$H^{3}(\tilde{X},\C)=H^{2,1}(\tilde{X})\oplus H^{1,2}(\tilde{X}),$$
where $h^{2,1}(\tilde{X})=h^{1,2}(\tilde{X})=5$.
Moreover in \cite{ACT02}, they show that the Hodge structure carries an action of the group $\mu_{3}$ of cubic roots of unity, which is induced by the action of the group $\mu_{3}$ on $\tilde{X}$ such that $$[x_{0},x_{1},x_{2}, x_{3}, x_4]\mapsto [x_{0},x_{1},x_{2}, x_{3}, \omega x_4],\text{ where }\omega=\text{e}^{2\pi i/3}.$$
The eigenspace $H^{3}_{\bar{\omega}}(\tilde{X},\C)$ for the eigenvalue $\bar{\omega}=\text{e}^{\pi i/3}$ has the Hodge structure
$$H^{3}_{\bar{\omega}}(\tilde{X},\C)=H^{2,1}_{\bar{\omega}}(\tilde{X})\oplus H^{1,2}_{\bar{\omega}}(\tilde{X}),$$
where $\dim_{\C}(H^{2,1}_{\bar{\omega}}(\tilde{X}))=1$ and $\dim_{\C}(H^{1,2}_{\bar{\omega}}(\tilde{X}))=4$. In this case, the infinitesimal Torelli theorem holds in the form
$$H^{1}({X},\Theta_{{X}})\stackrel{\sim}{\longrightarrow} \text{Hom}(F_{\bar{\omega}}^{2}, F_{\bar{\omega}}^{1}/F_{\bar{\omega}}^{2}),$$
where $$F_{\bar{\omega}}^{2}=H^{2,1}_{\bar{\omega}}(\tilde{X})\text{ and }F_{\bar{\omega}}^{1}=H^{2,1}_{\bar{\omega}}(\tilde{X})\oplus H^{1,2}_{\bar{\omega}}(\tilde{X}).$$
Hence the subbundle $\mathcal{H}$ of Definition \ref{local-Torelli} exists and is
$$\mathcal{H}=\text{Hom}(\mathscr{F}_{\bar{\omega}}^{2}, \mathscr{F}_{\bar{\omega}}^{1}/\mathscr{F}_{\bar{\omega}}^2).$$

Similarly if $Y$ is a smooth cubic threefold, in \cite{ACT11}, they consider the cyclic triple covering $\tilde{Y}$ of $\mathbb{P}^4$ branched along $Y$, such that $H^{4}(\tilde{Y},\C)$ has an eigenspace $H^{4}_{\omega}(\tilde{Y},\C)$ for the eigenvalue $\omega=\text{e}^{2\pi i/3}$ with the Hodge structure
$$H^{4}_{\omega}(\tilde{Y},\C)=H^{3,1}_{\omega}(\tilde{Y})\oplus H^{2,2}_{\omega}(\tilde{Y}),$$
where $H^{3,1}_{\omega}(\tilde{Y})=H^{3,1}(\tilde{Y})$, and $\dim_{\C}(H^{3,1}_{\omega}(\tilde{Y}))=1$ and $\dim_{\C}(H^{2,2}_{\omega}(\tilde{Y}))=10$.
In this case, the infinitesimal Torelli theorem holds in the form
$$H^{1}({Y},\Theta_{{Y}})\stackrel{\sim}{\longrightarrow} \text{Hom}(F_{\omega}^{3}, F_{\omega}^{2}/F_{\omega}^{3}),$$
where $F_{\omega}^{3}=H^{3,1}_{\omega}(\tilde{Y})$ and $F_{\omega}^{2}=H^{4}_{\omega}(\tilde{Y},\C)$, and hence the subbundle $\mathcal{H}$ of Definition \ref{local-Torelli} exists and is
$$\mathcal{H}=\text{Hom}( \mathscr{F}_{\omega}^{3},  \mathscr{F}_{\omega}^{2}/ \mathscr{F}_{\omega}^{3}).$$

\end{example}

\begin{example}[Arrangements of hyperplanes]\label{arrangements}
Let $m\ge n$ be positive integers. Consider the complementary set
$$U=\mathbb{P}^n\setminus \bigcup_{i=1}^m H_i,$$
where $H_i$ is a hyperplane of $\mathbb{P}^n$ for each $1\le i\le m$ such that $H_1,\cdots ,H_m$ are in general positions. That is to say that, if $H_i$ is defined by the
linear forms
\begin{align}\label{linear form}
f_i(z_0,\cdots, z_n)=\sum_{j=0}^n a_{ij}z_j, 1\le i\le m,
\end{align}
then the matrix $(a_{ij})_{1\le i\le m,0\le j\le n}$ is of full rank.

The fundamental group $\pi_{1}(U)$ of $U$ is generated by the basis $g_{1},\cdots ,g_{m}$ with the relation that
$$g_{1}\cdots g_{m}=1.$$
One can see Section 8 of \cite{DK} for the geometric meaning of each generator $g_{i}$.

Choose a set of rational numbers $\mu=(\mu_{1},\cdots,\mu_{m})$ satisfying
\begin{align*}
& 0< \mu_{i}< 1, 1\le {i}\le m;\\
& | \mu |:\,=\sum_{i=1}^{m}\mu_{i}\in \mathbb{Z}.
\end{align*}
Define $\mathcal{L}_{\mu}$ to be the local system on $U$ by the homomorphism
$$\chi :\, \pi_{1}(U)\to \C^{*}, \, g_{i}\mapsto e^{-2\pi \i\mu_{i}}.$$
Since $|\mu|\in \mathbb{Z}$, the above homomorphism is well-defined. We are interested in the cohomology $H^{*}(U,\mathcal{L}_{\mu})$ with Hodge decomposition defined as follows.

Let $d$ be the least common denominator of $\mu_{1},\cdots,\mu_{m}$.
We define  $X$ to be the smooth projective variety of $\mathbb{P}^{m-1}$ by the equations
\begin{align}\label{variety of hyperplane}
a_{1j}z_{1}^{d}+\cdots+a_{mj}z_{m}^{d}=0, 0\le j\le n,
\end{align}
where the coefficients $a_{ij}$, $1\le i\le m,0\le j\le n$ determine the hyperplanes as \eqref{linear form}, and $[z_{1},\cdots, z_{m}]$ is the homogeneous coordinate of $\mathbb{P}^{m-1}$.

Define the finite group $G$ to be
$$G=\pi_{1}(U)/\pi_{1}(U)^{d},$$
which is isomorphic to the additive group $$(\mathbb{Z}/d)^{m}/<\sum_{i=1}^{m}e_{i}>,$$ where $e_{i}$ is the generator of the $i$-th component of $(\mathbb{Z}/d)^{m}$. The
group $G$ acts on $X$ as an automorphism induced by the well-defined action on $\mathbb{P}^{m-1}$:
$$[z_{1},\cdots, z_{m}] \mapsto [\tilde{g}_{1}z_{1},\cdots,\tilde{g}_{m}z_{m}],$$
where $\tilde{g}_{i}$ is the image of $g_{i}$ in $G$, $1\le i\le m$.
The action of $G$ on $X$ also induces an action on $H^{*}(X,\mathbb{Z})$.
By the definition of $d$ and $\chi:\, \pi_{1}(U)\to \C^{*}$, $\chi$ can also be considered as a character in $\text{Hom}(G,\C^{*})$.
Let $H^{n}_{\chi}(X,\C)$ be defined in the construction of eigenperiod map, and $$H^{n}_{\chi}(X,\C)=\bigoplus_{p+q=n} H^{p,q}_{\chi}(X)$$ be the decomposition as in \eqref{chi decomp}.

Lemma 8.1 of \cite{DK} implies that $$H^{i}(U,\mathcal{L}_{\mu})=0\text{ if }i\ne n, \text{ and } H^{n}(U,\mathcal{L}_{\mu})\simeq H^{n}_{\chi}(X,\C).$$
Then the isomorphism $H^{n}(U,\mathcal{L}_{\mu})\simeq H^{n}_{\chi}(X,\C)$ gives a Hodge structure on $H^{n}(U,\mathcal{L}_{\mu})$ by defining $$H^{n}(U,\mathcal{L}_{\mu})^{p,q}=H^{p,q}_{\chi}(X),\text{ for any }p+q=n.$$

Denote $h^{p,q}_{\chi}(X)=\dim_{\C}H^{p,q}_{\chi}(X)$. Then Lemma 8.2 of \cite{DK} implies that
$$h^{p,q}_{\chi}(X)={|\mu|-1 \choose p}{m-1-|\mu| \choose q},$$
and
$$\dim_{\C}H^{n}_{\chi}(X,\C)={m-2 \choose n}.$$

Now we consider the case that $|\mu|=\mu_{1}+\cdots +\mu_{m}=n+1$. Then by direct computations
\begin{eqnarray*}
h^{n,0}_{\chi}(X)&=&{n \choose n}{m-n-2 \choose 0}=1,\\
h^{n-1,1}_{\chi}(X)&=&{n \choose n-1}{m-n-2 \choose 1}=n(m-n-2).
\end{eqnarray*}
We know that the moduli space $\mathcal{P}_{m,n}$ of the ordered sets of $m$ hyperplanes in general linear position in $\mathbb{P}^{n}$ is a quasi-projective algebraic variety of dimension $n(m-n-2)$. 

From \cite{DK}, we can define the eigenperiod map $$\Phi_{\chi}:\, \tilde{\mathcal{P}}_{m,n} \to D_{\chi}$$ as before on the universal covering space
$\tilde{\mathcal{P}}_{m,n}$ of $\mathcal{P}_{m,n}$. Moreover from Theorem 8.3 of \cite{DK}, \cite{Varchenko} and \cite{JL}, we know that the eigenperiod map $\Phi_{\chi}:\,
\tilde{\mathcal{P}}_{m,n} \to D_{\chi}$ is a local isomorphism, if $|\mu|=n+1$. Hence the Hodge subbundle $\mathcal{H}$ of Definition \ref{local-Torelli} exists and is
$$\mathcal{H}=\text{Hom}( \mathscr{F}_{\chi}^{n},  \mathscr{F}_{\chi}^{n-1}/ \mathscr{F}_{\chi}^{n}).$$

When $n=1$, hyperplanes in $\mathbb{P}^{1}$ are points in $\mathbb{P}^{1}$, and the arrangement of $m$ hyperplanes in $\mathbb{P}^{1}$ with local system $\mathcal{L}_{\mu}$, $|\mu|=2$ is equivalent to the hypergeometric form
$$\omega_{\mu}=(z-z_{1})^{-\mu_{1}}\cdots(z-z_{m-1})^{-\mu_{m-1}}dz.$$
This is Deligne-Mostow theory studied in their famous paper \cite{DM}.

\end{example}


\section{Extensions of moduli spaces}\label{compactifications}
In this section, we study three completions, or extensions, of the moduli spaces and prove the equivalence of them.
Then we prove that the extended period maps over the extended moduli spaces still satisfy the Griffiths transversality. Most of the results in this section are either review or extensions of the results from \cite{LS}. For reader's convenience we give proofs of some of the key results.

\subsection{Extensions of moduli spaces}\label{compactification}
In this section we assume that $\Z$ is quasi-projective. By Hironaka's resolution of singularity theorem, $\Z$ admits a compactification $\bar{\mathcal Z}_m$ such that
$\bar{\mathcal Z}_m$ is a smooth projective variety, and that $\bar{\mathcal Z}_m\setminus \Z$ is a divisor with simple normal crossings.

Let $\Z'\supseteq \Z$ be the maximal subset of $\bar{\mathcal Z}_m$ to which the period map $\Phi_{\Z}:\, \Z\to D/\Gamma$ extends continuously and let $$\Phi_{\Z'} : \, \Z'\to D/\Gamma$$ be the extended map. Then one has the commutative diagram
\begin{equation*}
\xymatrix{
\Z \ar@(ur,ul)[r]+<16mm,4mm>^-{\Phi_{\Z}}\ar[r]^-{i} &\Z' \ar[r]^-{\Phi_{\Z'}} & D/\Gamma.
}
\end{equation*}
with $i :\, \Z\to \Z'$ the inclusion map.

The following result is a simple corollary of Griffiths \cite{Griffiths3}.

\begin{proposition}\label{openness1}
The space $\Z'$ is a Zariski open subset of $\bar{\mathcal Z}_m$ with $$\text{codim}_{\C}(\bar{\mathcal Z}_m\setminus \Z')\ge 1,$$ and $\Z'\setminus \Z$ consists of the points of $\bar{\mathcal Z}_m\setminus \Z$ around which the Picard-Lefschetz transformations are trivial. Moreover the extended period map $$\Phi_{\Z'}:\, \Z'\to D/\Gamma$$ is a proper holomorphic map.
\end{proposition}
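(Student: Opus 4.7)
The plan is to invoke Griffiths' Riemann extension and properness theorems for period maps on quasi-projective bases with simple normal crossing boundary. Working locally around any boundary point $p\in\bar{\mathcal Z}_m\setminus\Z$, I would choose polydisc coordinates so that $\bar{\mathcal Z}_m\setminus\Z$ is cut out by $z_1\cdots z_k=0$ in a neighborhood $U$ of $p$, and analyze the Picard--Lefschetz transformations $T_1,\dots,T_k\in\Gamma$ obtained as monodromy around the local divisor components $\{z_i=0\}$. The Riemann extension theorem for period maps, due to Griffiths, states that $\Phi_{\Z}$ extends continuously (hence holomorphically, by the standard removable singularity theorem for bounded holomorphic maps to $D/\Gamma$) across $p$ if and only if each $T_i$ acts trivially; otherwise any lift of $\Phi_{\Z}$ to the universal cover of $U\cap\Z$ is changed by the nontrivial element $T_i$ around a loop encircling $\{z_i=0\}$, which obstructs continuous extension at $p$. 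This identifies $\Z'\setminus\Z$ with precisely the locus of boundary points all of whose local Picard--Lefschetz transformations are trivial.

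For the Zariski openness of $\Z'$ in $\bar{\mathcal Z}_m$, I would use that monodromy around an irreducible component of a normal crossing divisor is well-defined up to conjugation and is constant along that component. Since $\bar{\mathcal Z}_m\setminus\Z$ has finitely many irreducible components, each either wholly carries a nontrivial Picard--Lefschetz transformation or wholly carries the trivial one. Hence $\Z'$ is the complement in $\bar{\mathcal Z}_m$ of the finite union of those components whose associated Picard--Lefschetz transformation is nontrivial, together with the deeper strata where at least one such transformation still acts nontrivially. This is Zariski closed, so $\Z'$ is Zariski open. The codimension bound $\text{codim}_{\C}(\bar{\mathcal Z}_m\setminus\Z')\ge 1$ is then immediate from $\bar{\mathcal Z}_m\setminus\Z'\subseteq\bar{\mathcal Z}_m\setminus\Z$, since the latter is a divisor.

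For the properness of $\Phi_{\Z'}$, I would appeal directly to Griffiths' properness theorem. Given a sequence $\{p_n\}\subset\Z'$ with $\Phi_{\Z'}(p_n)$ convergent in $D/\Gamma$, compactness of $\bar{\mathcal Z}_m$ gives a subsequential limit $p\in\bar{\mathcal Z}_m$, and the task is to show $p\in\Z'$. The hardest step is ruling out $p\in\bar{\mathcal Z}_m\setminus\Z'$: this is where Schmid's nilpotent orbit theorem enters, forcing the period map to diverge toward the boundary of $D/\Gamma$ along any approach to a point with nontrivial quasi-unipotent local monodromy, which contradicts the assumed convergence of $\Phi_{\Z'}(p_n)$. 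The main obstacle is packaging the Schmid asymptotics to produce this contradiction rigorously, but since this is a classical consequence of Griffiths--Schmid theory rather than anything specific to our setting, the argument reduces to citing the corresponding results of Griffiths.
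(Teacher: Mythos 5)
Your overall route (reduce to Griffiths' extension and properness theorems, analyze the local Picard--Lefschetz transformations $T_1,\dots,T_k$ in a polycylinder neighborhood, and get Zariski openness from the fact that the monodromy is attached to irreducible components of the boundary divisor) is the same as the paper's, but there are two genuine gaps in how you carry it out.

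First, the statement you attribute to Griffiths is not what his theorem says: the period map extends across boundary points whose local monodromies are of \emph{finite order}, not only those where they are trivial (finite nontrivial monodromy does not obstruct a continuous, even holomorphic, extension to $D/\Gamma$ --- think of the $j$-map for elliptic curves). The proposition's assertion that $\Z'\setminus\Z$ is exactly the locus of \emph{trivial} Picard--Lefschetz transformations is special to the level $m\geq 3$ setting: the paper first defines $\Z''$ as the locus of finite-order local monodromy, where Griffiths' Theorem 9.6 (and Propositions 9.10, 9.11, via Szendr\"oi) applies, and then invokes Lemma \ref{trivial monodromy} (Serre's lemma on level structures) to conclude that finite order forces triviality. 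Your proposal never uses the level structure, so the characterization of $\Z'\setminus\Z$ you state is not justified as written.

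Second, your argument for the ``only if'' direction --- that a nontrivial $T_i$ obstructs continuous extension because a lift to the universal cover of $U\cap\Z$ is changed by $T_i$ --- does not work as stated, since the extension is into the quotient $D/\Gamma$, where the monodromy action is divided out; one needs an actual argument that continuous extension forces the local monodromy to have finite order. The paper closes this direction differently and more cleanly: it shows $\Z'\subseteq\Z''$ by taking $q\in\Z'$, approximating it by $q_k\in\Z$ with $\Phi_{\Z}(q_k)\to\Phi_{\Z'}(q)$, and using the \emph{properness} of the already-constructed Griffiths extension $\Phi_{\Z''}$ to conclude $q\in\Z''$. Your sketch instead tries to re-derive properness from Schmid's asymptotics and simultaneously uses the extension criterion, which is closer to reproving Griffiths' results than to citing them; if you take the properness of the Griffiths extension as given (as the paper does), the limit-point argument replaces both your heuristic obstruction and your Schmid-based divergence argument.
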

\begin{proof}
Since $\bar{\mathcal Z}_m\setminus \Z$ is a divisor with simple normal crossings, for any point $p$ in $\bar{\mathcal Z}_m\setminus \Z$ we can find a neighborhood $U$ of $p$ in $\bar{\mathcal Z}_m$, which is isomorphic to a polycylinder $\Delta^n$, such that
$$U\cap \Z\simeq (\Delta^*)^k\times \Delta^{N-k},$$
where $N$ is the complex dimension of $\bar{\mathcal Z}_m$ and $0\le k \le N$.

Let $T_i$, $1\le i\le k$ be the image of the $i$-th fundamental group of $(\Delta^*)^k$ under the representation of monodromy. Then the $T_i$'s are called the Picard-Lefschetz
transformations.

Let us define the subspace $\Z''\subset \bar{\mathcal{Z}}_{m}$ which contains $\Z$ and the points in $\bar{\mathcal{Z}}_{m}\setminus \Z$ around which the Picard-Lefschetz transformations are of finite order, hence trivial by Lemma \ref{trivial monodromy}.  Now we claim that $\Z'=\Z''$.

From Theorem 9.6 in \cite{Griffiths3} and its proof, or Corollary 13.4.6 in \cite{CMP}, we know that $\Z''$ is open and dense in $\bar{\mathcal{Z}}_{m}$ and the period map $\Phi_{\Z}$ extends to a holomorphic map $$\Phi_{\Z''} :\, \Z'' \to D/\Gamma$$ which is proper.

In fact, as proved in Theorem 3.1 of \cite{sz}, which follows directly from Propositions 9.10 and 9.11 of \cite{Griffiths3}, $\bar{\mathcal{Z}}_{m}\backslash \Z''$ consists of the components of divisors in $\bar{\mathcal{Z}}_{m}$ whose Picard-Lefschetz transformations are of infinite order, and therefore
$\Z''$ is a Zariski open subset in $\bar{\mathcal{Z}}_{m}$.
Hence by the definition of $\Z'$, we know that $\Z''\subseteq \Z'$.

Conversely, let $q$ be any point in $\Z'$ with image $u=\Phi_{\Z'}(q)\in D/\Gamma$. By the definition of $\Z'$, we can choose the points $$q_{k}\in \Z, \ k=1,2,\cdots$$ such that $q_{k}\longrightarrow q$ with images $u_{k}=\Phi_{\Z}(q_{k})\longrightarrow u$ as $k\longrightarrow \infty$. Since $$\Phi_{\Z''}:\, \Z''\to D/\Gamma$$ is proper, the sequence $$\{q_{k}\}_{k=1}^{\infty}\subset (\Phi_{\Z''})^{-1}(\{u_{k}\}_{k=1}^{\infty})$$ has the limit point $q$ in $\Z''$, that is to say $q\in \Z''$ and $\Z'\subseteq \Z''$.

Therefore we have proved that $\Z'=\Z''$ and $\Z'\setminus \Z$ consists of the points around which the Picard-Lefschetz transformations are trivial.

Griffiths \cite{Griffiths3} proved that the extended period map $\Phi_{\Z'}:\, \Z'\to D/\Gamma$ is a proper holomorphic map. See also \cite{Sommese} in which $\Phi_{\Z'}$ is called the Griffiths extension.
\end{proof}


\subsection{Hodge metric completion}\label{completion}
In Section \ref{compactification} we have discussed the Griffiths extension $$\Phi_{\Z'}:\, \Z'\to D/\Gamma.$$ In this section, we use the local Torelli theorem to give a geometric interpretation of $\Z'$, which is the Hodge metric completion of $\Z$.

In \cite{GS}, Griffiths and Schmid studied the {Hodge metric} on the period domain $D$, which we denote by $h$. The Hodge metric is a complete homogeneous metric. In the case that local Torelli holds, the tangent maps of the period map $\Phi_{\Z}$ and the lifted period map $\Phi$ both are injective. It follows that the pull-backs of $h$ by $\Phi_{\Z}$ and $\Phi$ on $\Z$ and $\T$ respectively are well-defined K\"ahler metrics. For convenience we will still call these pull-back metrics the Hodge metrics.

Let us denote by $\ZZ$ the completion of $\Z$ in $\bar{\mathcal Z}_m$ with respect to the Hodge metric. Then $\ZZ$ is the smallest complete space with respect to the Hodge metric that contains $\Z$.

Now we recall some basic properties about metric completion space we are using in this paper. First we know that the metric completion space of a connected space is still connected. Therefore, $\ZZ$ is connected.

Suppose $(X, d)$ is a metric space with the metric $d$. Then the metric completion space of $(X, d)$ is unique in the following sense: if $\bar{X}_1$ and $\bar{X}_2$ are complete metric spaces that both contain $X$ as a dense set, then there exists an isometry $$f: \bar{X}_1\rightarrow \bar{X}_2$$ such that $f|_{X}$ is the identity map on $X$. Moreover, the metric completion space $\bar{X}$ of $X$ is the smallest complete metric space containing $X$ in the sense that any other complete space that contains $X$ as a subspace must also contains $\bar{X}$ as a subspace.
Hence the Hodge metric completion space $\ZZ$ is unique up to isometry, although the compact space $\bar{\mathcal{Z}}_{m}$ may not be unique. This means that our definition of $\ZZ$ is intrinsic.

Moreover, suppose $\bar{X}$ is the metric completion space of the metric space $(X, d)$. If there is a continuous map $f: \,X\rightarrow Y$ which is a local isometry with $Y$ a complete space, then there exists a continuous extension $\bar{f}:\,\bar{X}\rightarrow{Y}$ such that $\bar{f}|_{X}=f$.
Since $D/\Gamma$ with the Hodge metric $h$ is complete, we can extend the period map to a continuous map $$\Phi_{\ZZ}:\, \ZZ\to D/\Gamma.$$

With the above preparations, we can prove the following proposition.

\begin{proposition}\label{openness2} We have
$\Z'=\ZZ$, and the extended period map $$\Phi_{\ZZ}:\, \ZZ\to D/\Gamma$$ is proper and holomorphic.
\end{proposition}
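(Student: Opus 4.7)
The plan is to prove the two set-theoretic inclusions $\sZZ \subseteq \Z'$ and $\Z' \subseteq \sZZ$ separately, and then deduce properness and holomorphicity of the extended period map directly from Proposition \ref{openness1}.

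For the first inclusion $\sZZ \subseteq \Z'$, I would take a point $p \in \sZZ$ and a representing Cauchy sequence $\{q_n\} \subset \Z$ with respect to the Hodge metric $h$. Since $\Phi_\Z$ is a local isometry with respect to $h$ (the Hodge metric on $\Z$ is by definition its pullback), the image sequence $\{\Phi_\Z(q_n)\}$ is Cauchy in $(D/\Gamma, h)$, and since $h$ on $D/\Gamma$ is complete, it converges to some $y \in D/\Gamma$. Now choose a compact neighborhood $V$ of $y$; by the properness of $\Phi_{\Z'}$ from Proposition \ref{openness1}, $\Phi_{\Z'}^{-1}(V)$ is compact in $\Z'$, and hence contains the tail of $\{q_n\}$. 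Consequently the ambient limit $p$ lies in $\Z'$.

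For the reverse inclusion $\Z' \subseteq \sZZ$, the main point is to produce, for each $p \in \Z'$, a sequence in $\Z$ converging to $p$ that is Cauchy in $d_h$. Since $\Z' \setminus \Z$ is a proper closed analytic subset of $\Z'$ of complex codimension at least $1$ (by Proposition \ref{openness1}), it has real codimension at least $2$, so $\Z$ is dense in $\Z'$ and any small polydisc neighborhood $U \subset \Z'$ of $p$ has the property that $U \setminus \Z$ is totally disconnected at the level of paths, i.e.\ any two points of $U \cap \Z$ can be joined by a piecewise-smooth path in $U \cap \Z$ whose Euclidean length is arbitrarily close to the Euclidean distance. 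Because $\Phi_{\Z'}$ is holomorphic, the pullback Hermitian semi-form $\Phi_{\Z'}^{*}h$ on $\Z'$ is smooth and therefore locally bounded by some constant $C$ times the Euclidean metric in local coordinates on $U$. Picking any sequence $q_n \in \Z \cap U$ with $q_n \to p$, we obtain $d_h(q_n, q_m) \le C\, d_{\mathrm{Euc}}(q_n, q_m) + \varepsilon$ for an arbitrarily small $\varepsilon > 0$, so $\{q_n\}$ is Cauchy in the Hodge metric and $p \in \sZZ$.

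Once $\Z' = \sZZ$ is established, properness and holomorphicity of $\Phi_{\sZZ}$ are immediate from Proposition \ref{openness1}, since $\Phi_{\sZZ}$ is precisely the map $\Phi_{\Z'}$ under this identification. The main obstacle I anticipate is the reverse inclusion: one needs to ensure that sequences converging to a degenerate point $p \in \Z' \setminus \Z$ in the ambient topology are automatically Cauchy in the intrinsic Hodge metric of $\Z$, which requires the geometric input that paths in $U \cap \Z$ can approximate the Euclidean distance in $U$; this is where the codimension bound on $\Z' \setminus \Z$ and the holomorphicity (hence smoothness) of the extended period map enter crucially.
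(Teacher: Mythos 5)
Your proof is correct and follows essentially the same route as the paper: both inclusions are deduced from Proposition \ref{openness1} together with completeness of the Hodge metric on $D/\Gamma$, and properness and holomorphicity of $\Phi_{\ZZ}$ then come for free from that proposition. The only differences are matters of detail: for $\ZZ\subseteq\Z'$ you use properness of $\Phi_{\Z'}$ and a compactness argument where the paper invokes the continuous extension of $\Phi_{\Z}$ to the metric completion and the maximality of $\Z'$, and for $\Z'\subseteq\ZZ$ you spell out, via local boundedness of $\Phi_{\Z'}^{*}h$ and the fact that $\Z'\setminus\Z$ has real codimension at least two, the paper's one-line claim that boundary points are at finite Hodge distance from $\Z$.
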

\begin{proof}
Since we have the continuously extended  map $$\Phi_{\ZZ}:\, \ZZ\to D/\Gamma,$$ we see that $\ZZ\subseteq \Z'$ by the definition of $\Z'$.

Conversely, let $q$ be any point in $\Z'\setminus \Z$.
Fix a point $p$ in $\Z$.
Since $q$ is mapped into $D/\Gamma$ via the extended period map $\Phi_{\Z'}$, we get that $q$ has  finite Hodge distance from $p$. Here the Hodge distance is defined by the pull-back Hodge metric on $\Z$. Therefore $q$ lies in the Hodge metric completion $\ZZ$ of $\Z$. So we get that $\Z'\subseteq \ZZ$.

We now have proved that $\Z'=\ZZ$, which, together with Proposition \ref{openness1}, completes the proof of the proposition.
\end{proof}

From now on we will use $\ZZ$ for our discussions, since it has explicit geometric structure which is convenient for our discussion.


\subsection{Properties of the extended period map}
Let $\TT$ be the universal cover of $\ZZ$ with the universal covering map $$\pi_m^H:\,  \TT\to \ZZ.$$ We then have the following commutative diagram
\begin{equation}\label{main-diagram}
\xymatrix{\T \ar[r]^{i_m}\ar[d]^{\pi_m}&\TT\ar[d]^{\pi_m^H}\ar[r]^{\Phi_m^H}&D\ar[d]^{\pi_D}\\
\Z\ar[r]^{i}&\ZZ\ar[r]^{\Phi_{\ZZ}}&D/\Gamma,
}
\end{equation}
where $i_m$ is the lifting of $i\circ \pi_m$ with respect to the covering map $\pi_m^H$ and $\Phi_m^H$ is the lifting of $\Phi_{\ZZ}\circ \pi_m^H$ with respect to the covering map $$\pi_D:\, D\to D/\Gamma.$$

As the lifts of the holomorphic maps $i$ and $\Phi_{\mathcal{Z}_m}^H$ to universal covers, both $i_m$ and ${\Phi}^{H}_{m}$ are easily seen to be holomorphic maps.
There are different choices of $i_m$ and $\Phi_m^H$, but the elementary topological argument as given in Lemma A.1 in the Appendix of \cite{LS} shows that we can choose $i_m$ and $\Phi_m^H$ such that $$\Phi=\Phi_m^H\circ i_m.$$

Let $\T_m\subseteq \TT$ be defined by $$\T_m=i_m(\T).$$
Then we have the following lemma.
\begin{lemma}\label{openness3}
$\T_m=(\pi_m^H)^{-1}(\Z)$, and $i_m: \, \T \to \T_m$ is a covering map.

\end{lemma}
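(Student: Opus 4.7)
The plan is to first show that $(\pi_m^H)^{-1}(\Z)$ is connected by a fundamental-group argument, and then to recognize $i_m$ as a covering onto it by the standard lift-of-covering principle. Write $i:\Z\hookrightarrow\ZZ$ for the inclusion. Since $\Z$ is open in $\bar{\mathcal{Z}}_m$ and $\ZZ\subseteq\bar{\mathcal{Z}}_m$, $\Z$ is open in $\ZZ$. Moreover, by Propositions \ref{openness1} and \ref{openness2}, $\ZZ$ is a smooth complex manifold (being Zariski open in the smooth projective $\bar{\mathcal{Z}}_m$), and $\ZZ\setminus\Z$ is contained in the normal-crossing divisor $\bar{\mathcal{Z}}_m\setminus\Z$, so it has complex codimension at least one in $\ZZ$, i.e.\ real codimension at least two. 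The classical general-position argument for deleting a closed subset of real codimension $\geq 2$ from a smooth manifold then yields that the inclusion-induced homomorphism
\[
 i_*:\,\pi_1(\Z)\longrightarrow\pi_1(\ZZ)
\]
is surjective. This is the sole geometric input I need.

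Next, since $\pi_m^H:\TT\to\ZZ$ is the universal covering, the connected components of $(\pi_m^H)^{-1}(\Z)$ are in bijection with the right cosets of $i_*(\pi_1(\Z))$ in $\pi_1(\ZZ)$ (via the deck action on a fixed fibre); by the surjectivity just established there is only one coset, so $(\pi_m^H)^{-1}(\Z)$ is connected. The restriction $p:=\pi_m^H|_{(\pi_m^H)^{-1}(\Z)}:(\pi_m^H)^{-1}(\Z)\to\Z$ is then a covering map. From the relation $\pi_m^H\circ i_m=i\circ\pi_m$ we deduce $i_m(\T)\subseteq(\pi_m^H)^{-1}(\Z)$ and $p\circ i_m=\pi_m$, the universal covering of $\Z$. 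I would then invoke the standard covering-space fact that if $p$ is a covering and $p\circ i_m$ is a covering, then $i_m$ itself is a covering of its image, and the image is a union of connected components of the target. Since $\T$ is connected and $(\pi_m^H)^{-1}(\Z)$ is connected, $i_m:\T\to(\pi_m^H)^{-1}(\Z)$ is a surjective covering. This gives $\T_m=i_m(\T)=(\pi_m^H)^{-1}(\Z)$ and shows that $i_m:\T\to\T_m$ is a covering map, as claimed.

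The main obstacle is the surjectivity of $i_*$; it depends crucially on $\ZZ$ being smooth and on $\ZZ\setminus\Z$ having complex codimension at least one, both facts established in Section~\ref{compactification} via Hironaka's resolution. Once these structural properties are in hand, the remainder of the argument is purely formal covering-space theory and uses neither Hodge theory nor the period map.
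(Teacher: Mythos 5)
Your proposal is correct, and it reaches the lemma by a route that is packaged differently from the paper's, though both rest on exactly the same geometric input. The paper proves the inclusion $(\pi_m^H)^{-1}(\Z)\subseteq\T_m$ by hand: given $q$ in the preimage, it joins $i_m(r)$ to $q$ by a path in $\TT$, projects it to a loop $\Gamma$ in $\ZZ$, invokes Lemma A.2 of \cite{LS} to replace $\Gamma$ by a homotopic loop inside $\Z$, lifts that loop through the covering $\pi_m$ to $\T$, and concludes by uniqueness of path lifting; it then verifies the covering property of $i_m$ directly with evenly covered neighborhoods. You instead isolate the same nonformal ingredient --- that $\ZZ$ is smooth and $\ZZ\setminus\Z$ is an analytic subset of complex codimension at least one (Propositions \ref{openness1} and \ref{openness2}), hence of real codimension at least two, so that $i_*:\pi_1(\Z)\to\pi_1(\ZZ)$ is surjective, which is precisely what the paper's appeal to Lemma A.2 of \cite{LS} encodes --- and then let standard covering-space theory do the rest: the coset description of the components of $(\pi_m^H)^{-1}(\Z)$ under the universal cover gives connectedness, and the lemma that a map between coverings of a common locally path-connected base is itself a covering onto a union of components delivers simultaneously the equality $\T_m=(\pi_m^H)^{-1}(\Z)$ (the image being open and closed in a connected target) and the covering property of $i_m$, which the paper obtains in two separate steps. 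Your version buys a cleaner, more modular argument that makes the codimension-two statement visibly the only geometric input; the paper's version is more self-contained at the level of explicit path lifting. If you write yours up, do record that the coset argument uses that $\Z$ is connected and locally path-connected, which holds because $\Z$ is a connected complex manifold by the definition of the T-class, and note that since $i_m$ is holomorphic the topological covering you obtain is automatically a covering by local biholomorphisms, as in the paper.
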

\begin{proof} The proof is an elementary argument in basic topology.
First, from diagram (\ref{main-diagram}), we see that $$\pi_m^H(\T_m)=\pi_m^H(i_m(\T))=i(\pi_m(\T))=\Z,$$ hence $\T_m \subseteq (\pi_m^H)^{-1}(\Z)$.

Conversely, for any $q \in (\pi_m^H)^{-1}(\Z)$, we need to prove that $q\in \T_m$. Let $p=\pi_m^H(q)$. If there exists a $r\in \pi_m^{-1}(q)$ such that $i_m(r)=q$,  then we are
done. Otherwise, since $\TT$ is connected and thus path connected, we can draw a curve $\gamma$ from $i_m(r)$ to $q$ for some $r\in \pi_m^{-1}(q)$. Then we get a circle
$$\Gamma=\pi_m^H(\gamma)$$ in $\ZZ$. But Lemma A.2 in the Appendix of \cite{LS} implies that we can choose $\Gamma$ contained in $\Z$.

Note that $p\in \Gamma$. Since $\pi_m : \, \T \to \Z$ is covering map, we can lift $\Gamma$ to a unique curve $\tilde{\gamma}$ from $r$ to some $r'\in \pi_{m}^{-1}(p)$.  Now
both $\gamma$ and $i_{m}(\tilde{\gamma})$ map to $\Gamma$ via the covering map $$\pi_m^H :\,  \TT \to \ZZ,$$ that is $\gamma$ and and $i_{m}(\tilde{\gamma})$ are both the lifts
of $\Gamma$. By the uniqueness of homotopy lifting, $i_m(r')=q$, i.e. $q\in i_m(\T)=\T_m$.  Therefore we have proved that $$\T_m=(\pi_m^H)^{-1}(\Z),$$ and $\T_m$ is a smooth
complex submanifold of $\TT$.

To show that $i_m$ is a covering map, note that for any small enough open neighborhood $U$ in $\T_{m}$, the restricted map $$\pi_m^H|_{U} :\, U \to V=\pi_m^H(U)\subset \Z$$ is biholomorphic, and there exists a disjoint union $\cup_{i}V_{i}$ of open subsets in $\T$ such that $$\cup_{i}V_{i}=(\pi_{m})^{-1}(V)$$ and $\pi_{m}|_{V_{i}}:\, V_{i}\to V$ is biholomorphic. Then from the commutativity of diagram \eqref{main-diagram}, we have that $\cup_{i}V_{i}=(i_{m})^{-1}(U)$ and $$i_{m}|_{V_{i}}:\, V_{i}\to U$$ is biholomorphic. Therefore $i_m:\, \T \to \T_m$ is also a covering map.
\end{proof}

Lemma \ref{openness3} also implies that $\T_m$ is an open dense complex submanifold of $\TT$ and $\TT\setminus \T_m$ is an analytic subvariety of $\TT$ with $\text{codim}_\C(\TT\setminus \T_m)\ge 1$.

Indeed, from Proposition \ref{openness1} and Proposition \ref{openness2}, we have that $\ZZ\setminus\Z$ is an analytic subvariety of $\ZZ$. On the other hand, from Lemma \ref{openness3}, we know that  $\TT\setminus \T_m$ is the inverse image of $\ZZ\setminus \Z$ under the covering map $$\pi^{H}_m:\, \TT \to \ZZ.$$ This implies that $\TT\setminus \T_m$ is an analytic subvariety of $\TT$.

The identification $\T_m=(\pi_m^H)^{-1}(\Z)$ implies that $\T_m$ is a covering of $\Z$. Since $\T$ is the universal cover, we have proved that $\T$ is also the universal cover of $\T_m$ for each $m$.

\begin{lemma}\label{extendedtransversality}
The extended holomorphic map $\Phi_m^H :\, \TT \to D$ satisfies the Griffiths
transversality.
\end{lemma}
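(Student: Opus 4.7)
The plan is to reduce the statement on $\TT$ to the known Griffiths transversality on $\T$, using the fact that $\T_m=i_m(\T)$ is open and dense in $\TT$ with analytic complement of codimension $\geq 1$.

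First I would verify Griffiths transversality on the open subset $\T_m\subset\TT$. From Lemma A.1 in \cite{LS} (invoked in the paper right after diagram \eqref{main-diagram}), we may choose the lifts so that $\Phi=\Phi_m^H\circ i_m$ on $\T$. By Lemma \ref{openness3}, the map $i_m:\T\to\T_m$ is a covering map, hence a local biholomorphism. Therefore, given any point $p\in\T_m$ and any $q\in i_m^{-1}(p)$, the differential factors locally as
\begin{equation*}
d\Phi_m^H|_p=d\Phi|_q\circ (di_m|_q)^{-1}.
\end{equation*}
Since the original lifted period map $\Phi:\T\to D$ is known to satisfy Griffiths transversality (property (3) of period maps recalled in Section \ref{Hodge}), so does $\Phi_m^H$ on the open dense subset $\T_m\subseteq\TT$.

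Second, I would upgrade this to all of $\TT$ by an analytic-continuation argument. Griffiths transversality for the holomorphic map $\Phi_m^H$ means that for every $k$ the component of $d\Phi_m^H$ lying in $\mathrm{Hom}(\mathscr{F}^k,\mathscr{F}^{j}/\mathscr{F}^{k-1})$ vanishes for all $j<k-1$. Equivalently, letting $\mathcal{W}$ denote the complement holomorphic subbundle of $\mathrm{End}(H)$ that violates transversality, the pullback $(\Phi_m^H)^{*}$ sends the relevant piece of the differential to zero; this is the vanishing of a global holomorphic section of the pulled-back bundle $(\Phi_m^H)^{*}\mathcal{W}\otimes(\mathrm{T}^{1,0}\TT)^{\vee}$ on $\TT$. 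Because this section vanishes identically on $\T_m$ and $\TT\setminus\T_m$ is a proper analytic subvariety of the connected complex manifold $\TT$, the identity theorem for holomorphic sections forces it to vanish on all of $\TT$. Hence $\Phi_m^H$ satisfies Griffiths transversality everywhere on $\TT$.

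The only delicate point, and hence the main thing to get right, is the openness and density of $\T_m$ together with the analyticity of its complement, both of which are already supplied by Lemma \ref{openness3} and the remarks immediately following it (derived from Proposition \ref{openness1} and Proposition \ref{openness2}). Given these, the argument is a standard density-plus-holomorphicity extension and involves no further computation.
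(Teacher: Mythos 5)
Your proposal is correct, and its first step (transversality on $\T_m$ via $\Phi=\Phi_m^H\circ i_m$ together with $i_m:\T\to\T_m$ being a local biholomorphism) is exactly what the paper uses, though the paper leaves it as the one-line remark that $\Phi_m^H|_{\T_m}$ satisfies Griffiths transversality. Where you diverge is the extension step: the paper's argument is purely topological and needs only that $d\Phi_m^H$ is continuous and that the horizontal subbundle $\mathrm{T}^{1,0}_hD$ is closed in $\mathrm{T}^{1,0}D$, so that $(d\Phi_m^H)^{-1}(\mathrm{T}^{1,0}_hD)$ is a closed subset of $\mathrm{T}^{1,0}\TT$ containing the open dense subset $\mathrm{T}^{1,0}\T_m$, hence is everything; your version instead packages the failure of transversality as a holomorphic section vanishing on $\T_m$ and invokes the identity theorem on the connected manifold $\TT$. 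Both work; yours is slightly heavier but in exchange needs only that $\T_m$ be a nonempty open subset of the connected $\TT$ (density is not even required), while the paper's needs only continuity of the differential rather than holomorphicity of the obstruction. One cosmetic correction to yours: there is no canonical ``complement holomorphic subbundle of $\End(H)$'' violating transversality; the clean formulation is that the derivative of the flag induces, for each $k$, a holomorphic bundle map $(\Phi_m^H)^*\mathscr{F}^k\to (\Phi_m^H)^*\!\left(H\otimes\mathcal{O}/\mathscr{F}^{k-1}\right)$ twisted by $(\mathrm{T}^{1,0}\TT)^{\vee}$, i.e.\ a section of a quotient (not complement) bundle, and Griffiths transversality is precisely the vanishing of these sections; with that rephrasing your identity-theorem argument goes through verbatim.
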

\begin{proof}
Let $\mathrm{T}^{1,0}_hD$ be the horizontal subbundle.
Since $\Phi_m^H : \TT \to D$ is a holomorphic map, the tangent map
$$d\Phi_m^H : \, \mathrm{T}^{1,0}\TT \to \mathrm{T}^{1,0}D$$
is at least continuous. We only need to show that the image of $d\Phi_m^H$ is contained in the horizontal tangent bundle $\mathrm{T}^{1,0}_hD$.

Since $\mathrm{T}^{1,0}_hD$ is closed in $\mathrm{T}^{1,0}D$, the inverse image $(d\Phi_m^H)^{-1}(\mathrm{T}^{1,0}_hD)$ is a closed subset in $\mathrm{T}^{1,0}\TT$. But
$\Phi_m^H|_{\T_m}$ satisfies the Griffiths transversality, i.e. $$(d\Phi_m^H)^{-1}(\mathrm{T}^{1,0}_hD)$$ contains $\mathrm{T}^{1,0}\T_m$, which is open in $\mathrm{T}^{1,0}\TT$.
Hence $(d\Phi_m^H)^{-1}(\mathrm{T}^{1,0}_hD)$ contains the closure of $\mathrm{T}^{1,0}\T_m$, which is $\mathrm{T}^{1,0}\TT$.
\end{proof}


\section{Boundedness of period maps and affine structures}\label{bounded periods}

In this section we review the definitions and basic properties of period domains from Lie theory point of views. We consider the nilpotent Lie subalgebra $\mathfrak{n}_+$ of $\mathfrak{g}$ and define the corresponding unipotent group to be $$N_+=\exp(\mathfrak{n}_+).$$
Then we recall some results from our previous paper \cite{LS}, including the boundedness of period maps which asserts, as given in Theorem \ref{boundedness}, that the image of the period map $\Phi :\, \T \to D$ lies in $N_{+}\cap D$ as a bounded subset, and the result that there exists a global affine structure on $\T$ which is defined by the holomorphic map in \eqref{maptoA}, see Theorem \ref{affine-T}.
\\

First we fix a point $p$ in $\T$ and its image $o=\Phi(p)$ as the reference points or base points. Let us introduce the notion of adapted basis for the given Hodge decomposition or Hodge filtration.
For the fixed point $p\in \T$ and $f^k=\dim F^k_p$ for any $0\leq k\leq n$, we call a basis
\begin{equation*}
\xi=\left\{ \xi_0, \cdots, \xi_{f^{n}-1},\xi_{f^{n}}, \cdots ,\xi_{f^{n-1}-1} \cdots, \xi_{f^{k+1}}, \cdots, \xi_{f^k-1}, \cdots, \xi_{f^{1}},\cdots , \xi_{f^{0}-1} \right\}
\end{equation*}
 of $H^n_{pr}(\Y _p, \mathbb{C})$ an {adapted basis for the given Hodge decomposition}
$$H^n_{pr}(\Y _p, {\mathbb{C}})=H^{n, 0}_p\oplus H^{n-1, 1}_p\oplus\cdots \oplus H^{1, n-1}_p\oplus H^{0, n}_p,$$
if it satisfies $
H^{k, n-k}_p=\text{Span}_{\mathbb{C}}\left\{\xi_{f^{k+1}}, \cdots, \xi_{f^k-1}\right\}$ with $h^{k,n-k}=f^k-f^{k+1}$.

We call a basis
\begin{align*}
\zeta=\left\{ \zeta_0, \cdots, \zeta_{f^{n}-1},\zeta_{f^{n}}, \cdots ,\zeta_{f^{n-1}-1} \cdots, \zeta_{f^{k+1}}, \cdots, \zeta_{f^k-1}, \cdots,\zeta_{f^{1}},\cdots , \zeta_{f^{0}-1} \right\}
\end{align*}
of $H^n_{pr}(\Y _p, {\mathbb{C}})$ an {adapted basis for the given filtration}
\begin{align*}
F^{n}_p\subseteq F^{n-1}_p\subseteq\cdots\subseteq F^0_p
\end{align*}
if it satisfies $F^{k}_p=\text{Span}_{\mathbb{C}}\{\zeta_0, \cdots, \zeta_{f^k-1}\}$ with $\text{dim}_{\mathbb{C}}F^{k}=f^k$.
For the convenience of notations, we set $f^{n+1}=0$ and $m=f^0$.

The blocks of an $m\times m$ matrix $T$ are set as follows:
for each $0\leq \alpha, \beta\leq n$, the $(\alpha, \beta)$-th block $T^{\alpha, \beta}$ is
\begin{align}\label{block}
T^{\alpha, \beta}=\left(T_{ij}\right)_{f^{-\alpha+n+1}\leq i \leq f^{-\alpha+n}-1, \ f^{-\beta+n+1}\leq j\leq f^{-\beta+n}-1},
\end{align} where $T_{ij}$ is the entries of
the matrix $T$. In particular, $T =[T^{\alpha,\beta}]_{0\le \alpha,\beta \le n}$ is called a {block lower triangular matrix} if
$T^{\alpha,\beta}=0$ whenever $\alpha<\beta$.

Let $H_{\mathbb{F}}=H^n_{pr}(X, \mathbb{F})$, where $\mathbb{F}$ can be chosen as $\mathbb{Z}$, $\mathbb{R}$, $\mathbb{C}$. We define the complex Lie group
\begin{align*}
G_{\mathbb{C}}=\{ g\in GL(H_{\mathbb{C}})|~ Q(gu, gv)=Q(u, v) \text{ for all } u, v\in H_{\mathbb{C}}\},
\end{align*}
and the real one
\begin{align*}
G_{\mathbb{R}}=\{ g\in GL(H_{\mathbb{R}})|~ Q(gu, gv)=Q(u, v) \text{ for all } u, v\in H_{\mathbb{R}}\}.
\end{align*}
Griffiths in \cite{Griffiths1} showed that $G_{\mathbb{C}}$ acts on $\check{D}$ transitively and so does $G_{\mathbb{R}}$ on $D$. The stabilizer of $G_{\mathbb{C}}$ on $\check{D}$ at the fixed point $o$ is
$$B=\{g\in G_\C| gF_p^k=F_p^k,\ 0\le k\le n\},$$
and the one of $G_{\mathbb{R}}$ on $D$ is $V=B\cap G_\mathbb{R}$.
Thus we can realize $\check{D}$ as
$$\check{D}=G_\C/B,\text{ and }D=G_\mathbb{R}/V$$
so that $\check{D}$ is an algebraic manifold and $D\subseteq
\check{D}$ is an open complex submanifold.

The Lie algebra $\mathfrak{g}$ of the complex Lie group $G_{\mathbb{C}}$ is
\begin{align*}
\mathfrak{g}&=\{X\in \text{End}(H_\mathbb{C})|~ Q(Xu, v)+Q(u, Xv)=0, \text{ for all } u, v\in H_\mathbb{C}\},
\end{align*}
and the real subalgebra
$$\mathfrak{g}_0=\{X\in \mathfrak{g}|~ XH_{\mathbb{R}}\subseteq H_\mathbb{R}\}$$
is the Lie algebra of $G_\mathbb{R}$.
Note that $\mathfrak{g}$ is a simple complex Lie algebra and contains $\mathfrak{g}_0$ as a real form, i.e. $\mathfrak{g}=\mathfrak{g}_0\oplus \i\mathfrak{g}_0$.
Let us denote the complex conjugation of $\mathfrak{g}$ with respect to  the real form $\mathfrak{g}_0$ by $\tau_0$.

On the Lie algebra $\mathfrak{g}$ we can give a Hodge structure of weight zero by
\begin{align*}
\mathfrak{g}=\bigoplus_{k\in \mathbb{Z}} \mathfrak{g}^{k, -k}\quad\text{with}\quad\mathfrak{g}^{k, -k}=\{X\in \mathfrak{g}|XH^{r, n-r}_p\subseteq H^{r+k, n-r-k}_p,\ \forall r \}.
\end{align*}

By definition of $B$ the Lie algebra $\mathfrak{b}$ of $B$ has the form $\mathfrak{b}=\bigoplus_{k\geq 0} \mathfrak{g}^{k, -k}$. Then the Lie algebra
$\mathfrak{v}_0$ of $V$ is
$$\mathfrak{v}_0=\mathfrak{g}_0\cap \mathfrak{b}=\mathfrak{g}_0\cap \mathfrak{b}\cap\bar{\mathfrak{b}}=\mathfrak{g}_0\cap \mathfrak{g}^{0, 0}.$$ With the above isomorphisms, the holomorphic tangent space of $\check{D}$ at the base point is naturally isomorphic to $\mathfrak{g}/\mathfrak{b}$.

Let us consider the nilpotent Lie subalgebra
$\mathfrak{n}_+:=\oplus_{k\geq 1}\mathfrak{g}^{-k,k}$.  Then one
gets the isomorphism $\mathfrak{g}/\mathfrak{b}\cong
\mathfrak{n}_+$. We denote the corresponding unipotent Lie group to be
$$N_+=\exp(\mathfrak{n}_+).$$

As $\text{Ad}(g)(\mathfrak{g}^{k, -k})$ is in  $\bigoplus_{i\geq
k}\mathfrak{g}^{i, -i} \text{ for each } g\in B,$ the subspace
$\mathfrak{b}\oplus \mathfrak{g}^{-1, 1}/\mathfrak{b}\subseteq
\mathfrak{g}/\mathfrak{b}$ defines an Ad$(B)$-invariant subspace. By
left translation via $G_{\mathbb{C}}$,
$\mathfrak{b}\oplus\mathfrak{g}^{-1,1}/\mathfrak{b}$ gives rise to a
$G_{\mathbb{C}}$-invariant holomorphic subbundle of the holomorphic
tangent bundle. It will be denoted by $\mathrm{T}^{1,0}_{h}\check{D}$,
and will be referred to as the horizontal tangent subbundle. One can
check that this construction does not depend on the choice of the
base point.

The horizontal tangent subbundle, restricted to $D$, determines a subbundle $\mathrm{T}_{h}^{1, 0}D$ of the holomorphic tangent bundle $\mathrm{T}^{1, 0}D$ of $D$.
The $G_{\mathbb{C}}$-invariance of $\mathrm{T}^{1, 0}_{h}\check{D}$ implies the $G_{\mathbb{R}}$-invariance of $\mathrm{T}^{1, 0}_{h}D$. Note that the horizontal
tangent subbundle $\mathrm{T}_{h}^{1, 0}D$ can also be constructed as the associated bundle of the principle bundle $V\to G_\mathbb{R} \to D$ with the adjoint
representation of $V$ on the space $\mathfrak{b}\oplus\mathfrak{g}^{-1,1}/\mathfrak{b}$.

Let $\mathscr{F}^{k}$, $0\le k \le n$ be the  Hodge bundles on $D$
with fibers $\mathscr{F}^{k}_{s}=F_{s}^{k}$ for any $s\in D$. As another
interpretation of the horizontal bundle in terms of the Hodge
bundles $\mathscr{F}^{k}\to \check{D}$, $0\le k \le n$, one has
\begin{align}\label{horizontal}
\mathrm{T}^{1, 0}_{h}\check{D}\simeq \mathrm{T}^{1, 0}\check{D}\cap \bigoplus_{k=1}^{n}\text{Hom}(\mathscr{F}^{k}/\mathscr{F}^{k+1}, \mathscr{F}^{k-1}/\mathscr{F}^{k}).
\end{align}

We remark that the elements in $N_+$ can be realized as nonsingular block lower triangular matrices with identity blocks in the diagonal; elements in $B$ can be realized as nonsingular block upper triangular matrices.
If $c, c'\in N_+$ such that $cB=c'B$ in $\check{D}$, then $$c'^{-1}c\in N_+\cap B=\{I \},$$ i.e. $c=c'$. This means that the matrix representation in $N_+$ of the unipotent orbit $N_+(o)$ is unique. Therefore with the fixed base point $o\in \check{D}$, we can identify $N_+$ with its unipotent orbit $N_+(o)$ in $\check{D}$ by identifying an element $c\in N_+$ with $[c]=cB$ in $\check{D}$. Then $$N_+\subseteq\check{D}$$ is meaningful. In particular, when the base point $o$ is in $D$, we have $N_+\cap D\subseteq D$.

We define a submanifold of the Teichm\"uller space
\begin{align*}
\check{\T}=\Phi^{-1}(N_+\cap D).
\end{align*}
In Proposition 1.3 of \cite{LS}, we showed that $\mathcal{T}\backslash\check{\mathcal{T}}$ is an analytic subvariety of $\mathcal{T}$ with $\text{codim}_{\mathbb{C}}(\mathcal{T}\backslash\check{\mathcal{T}})\geq 1$.

At the base point $$\Phi(p)=o\in N_+\cap D,$$ we have identifications of the tangent spaces
$$\mathrm{T}_{o}^{1,0}N_+=\mathrm{T}_o^{1,0}D\simeq \mathfrak{n}_+,$$ and the exponential
map $$\text{exp} :\, \mathfrak{n}_+ \to N_+$$ is an isomorphism.

The Hodge metric on $\mathrm{T}_o^{1,0}D$, which is the restriction of the natural homogeneous metric on $D$ at the base point, induces an Euclidean metric on $N_+$ so that $\text{exp} :\, \mathfrak{n}_+ \to N_+$ is an isometry.

In Theorem 4.1 of \cite{LS}, we prove the following theorem, which was raised by Griffiths as Conjecture 10.1 in his paper \cite{Griffiths4}.

\begin{theorem}\label{boundedness}
The image of $\Phi : \T\to D$ lies in $N_+\cap D$ and is bounded with respect to the Euclidean metric on $N_+$.
\end{theorem}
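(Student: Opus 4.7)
The plan is to prove the two assertions in sequence: containment of the image in $N_+\cap D$, and then boundedness in the Euclidean metric on $N_+$ inherited via $\exp:\mathfrak{n}_+\xrightarrow{\sim} N_+$.

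For the containment, I would start from the submanifold $\check{\mathcal{T}} = \Phi^{-1}(N_+\cap D)$, which by Proposition 1.3 of \cite{LS} recalled just above is open and dense with analytic complement of complex codimension at least one. It suffices to show $\check{\mathcal{T}} = \mathcal{T}$. At any hypothetical $q\in\mathcal{T}\setminus\check{\mathcal{T}}$, the image $\Phi(q)$ lies in some other Bruhat cell $g\cdot N_+$ of $\check{D}$ for a $g\in G_\mathbb{C}$, so in a neighborhood $U$ of $q$ the period map admits a holomorphic lift $\Phi_g:U\to g\cdot N_+$. Pulling back by $g^{-1}$ and comparing with the lift into $N_+$ on the open overlap with $\check{\mathcal{T}}$, I would use the integrability of the horizontal distribution together with the isomorphism $d\Phi\cong\mathcal{H}$ from strong local Torelli to show that the two lifts glue into a single holomorphic map into $\mathfrak{n}_+$. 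Equivalently, the analytic subvariety $\mathcal{T}\setminus\check{\mathcal{T}}$ becomes a removable singularity for the $N_+$-valued lift, forcing it to be empty.

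For the boundedness, once $\Phi(\mathcal{T})\subseteq N_+\cap D$ is established, write $\Phi = \exp\circ X$ for a holomorphic $X:\mathcal{T}\to\mathfrak{n}_+$. Strong local Torelli combined with horizontality restricts the image of $dX$ to the horizontal Hodge subbundle $\mathcal{H}\subseteq \mathfrak{g}^{-1,1}$, and the Maurer-Cartan type integrability condition $[dX,dX]=0$ together with \eqref{local assumption} forces $X$ to land in an abelian subspace $\mathfrak{a}\subseteq\mathfrak{n}_+$ corresponding to $\mathcal{H}$. The Hodge-Riemann relation \eqref{HR2'} applied to the filtration $\exp(X(t))\cdot F_p^{\bullet}\in D$ then translates, via the explicit block form \eqref{block} of elements of $N_+$ in an adapted basis, into quadratic inequalities $Q(C\cdot v,\overline{v})>0$ that bound each entry of $X$ in terms of $Q(Cv,\bar v)$ on a fixed reference basis. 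Summing gives a universal upper bound $\|X\|^2\le C_0$ depending only on the Hodge numbers and the polarization pairing, which is exactly boundedness in the Euclidean metric.

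The main obstacle is the containment. A priori, Griffiths transversality alone does not prevent a horizontal holomorphic map from crossing from $N_+\cdot o$ to a neighboring Bruhat cell, and without the containment one cannot even speak of $X = \log\Phi$. The decisive ingredient must be strong local Torelli: it upgrades the codimension estimate on $\mathcal{T}\setminus\check{\mathcal{T}}$ to an actual removability statement, because the identification $T^{1,0}\mathcal{T}\cong\mathcal{H}$ provides a coordinate system on $\mathcal{T}$ compatible with the affine coordinates on $\mathfrak{n}_+$ along $\check{\mathcal{T}}$. Extending $X$ across any codimension-one analytic subvariety of $\mathcal{T}$ then reduces to a Hartogs-type extension for a holomorphic function into affine space, and emptiness of $\mathcal{T}\setminus\check{\mathcal{T}}$ follows.
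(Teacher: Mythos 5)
There is a genuine gap, in fact two, and it is worth noting first that this theorem is not proved in the present paper at all: it is quoted from Theorem 4.1 of \cite{LS}, where it is established for arbitrary period maps of analytic families, with no strong local Torelli hypothesis; an argument that leans on \eqref{local assumption} is therefore already aiming at the wrong level of generality. Your containment step is where the first gap lies. The complement $\T\setminus\check{\T}$ is only known to be an analytic subvariety of codimension at least one, so it may be a divisor; a Hartogs-type extension of the $\mathfrak{n}_+$-valued map $X=\exp^{-1}\circ\Phi$ across it is unavailable (Hartogs needs codimension at least two), and the Riemann extension theorem requires boundedness of $X$ near the divisor --- precisely what you have not yet established at that stage, and precisely what fails if $\T\setminus\check{\T}\neq\emptyset$, since the affine coordinates of the cell $N_+\subset\check{D}$ blow up as one approaches its complement. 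This is why the argument in \cite{LS} runs in the opposite order from yours: one first proves that $\Phi|_{\check{\T}}$ is bounded in the Euclidean metric of $N_+$, and only then uses the Riemann extension theorem across $\T\setminus\check{\T}$ to conclude $\check{\T}=\T$, giving containment and boundedness together.

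The boundedness step is also flawed on two counts. First, $X=\exp^{-1}\circ\Phi$ does not take values in an abelian subspace of $\mathfrak{g}^{-1,1}$: Griffiths transversality constrains the pulled-back Maurer--Cartan form, not $dX$ itself, and for weight $n\geq 2$ the map $X$ has nonzero components in all $\mathfrak{g}^{-k,k}$, $k\geq 1$. Indeed this is exactly why the paper introduces the projection $P$ and works with $\Psi=P\circ\Phi$ rather than asserting $\Phi(\T)\subseteq A\cap D$. Second, the Hodge--Riemann relation \eqref{HR2'} does not bound the entries of a general element of $N_+\cap D$: in general $N_+\cap D$ is an unbounded subset of $N_+$ (it contains affine charts of the compact fibres of $D\to G_{\mathbb{R}}/K$), which is why the boundedness of the image of the period map is a nontrivial statement (Griffiths' Conjecture 10.1) rather than a pointwise consequence of positivity. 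The essential inputs in \cite{LS} are the horizontality of the period map together with the Harish--Chandra-type boundedness of $\exp(\mathfrak{p}_+)\cap D$, exploited block by block in the matrix representation \eqref{block}; neither ingredient is present in your argument, so both halves of the proposal need to be reworked along those lines.
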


We remark that Theorem \ref{boundedness} is proved in \cite{LS} for general period maps and analytic families. To be precise, let $f:\, \X\to S$ be any analytic family of polarized manifolds over a quasi-projective manifold $S$, and let $\tilde{S}$ be the universal covering space of $S$ with the universal covering map $\pi:\, \tilde{S}\to S$. We can define the period map $$\Phi:\, S\to D/\Gamma$$ with the monodromy representation $\rho:\, \pi_{1}(S)\to \Gamma$, and the lifted period map $$\tilde{\Phi}:\, \tilde{S}\to D$$ such that the following diagram commutes
$$\xymatrix{
\tilde{S} \ar[r]^-{\tilde{\Phi}} \ar[d]^-{\pi} & D\ar[d]^-{\pi_{D}}\\
S \ar[r]^-{\Phi} & D/\Gamma .}$$
Then the conclusion of Theorem \ref{boundedness} still holds for the lifted period map $\tilde{\Phi}:\, \tilde{S}\to D$.

Recall that we have defined the extended period map $$\Phi_m^H : \TT \rightarrow D$$ in diagram \eqref{main-diagram}.
Then the extended period map $\Phi_m^H$ is still bounded by Corollary 6.1 in \cite{LS}, which follows directly from  Theorem \ref{boundedness} and the Riemann extension theorem.

\begin{corollary}\label{extended-boundedness}
The image of the extended period map $\Phi_m^H : \TT \rightarrow D$ lies in $N_+\cap D$ and is bounded with respect to the Euclidean metric on $N_+$.
\end{corollary}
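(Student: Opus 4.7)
The plan is to reduce the statement to Theorem \ref{boundedness} by exploiting the density of $\T_m=i_m(\T)$ inside $\TT$, together with the fact that the complement $\TT\setminus\T_m$ is an analytic subvariety of codimension at least one (Lemma \ref{openness3}). This is a classical setup for applying the Riemann extension theorem.

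First I would record the identification $\Phi_m^H\circ i_m=\Phi$, which was built into the construction of diagram \eqref{main-diagram}. Since $i_m:\T\to\T_m$ is surjective (in fact a covering map by Lemma \ref{openness3}), this gives the set equality $\Phi_m^H(\T_m)=\Phi(\T)$. By Theorem \ref{boundedness}, the right-hand side lies inside $N_+\cap D$ and is bounded in the Euclidean metric on $N_+\cong\mathfrak{n}_+\cong\C^N$ transported by the exponential map.

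Next I would upgrade this bound from $\T_m$ to all of $\TT$. Fix once and for all linear coordinates $(w_1,\dots,w_N)$ on $N_+$ coming from a basis of $\mathfrak{n}_+$ via $\exp$. On the open dense submanifold $\T_m\subset\TT$, the composition $\Phi_m^H|_{\T_m}$ takes values in $N_+\cap D$, so each coordinate $w_j\circ\Phi_m^H$ is a bounded holomorphic function on $\T_m$. Since $\TT\setminus\T_m$ is an analytic subvariety of $\TT$ with $\mathrm{codim}_\C(\TT\setminus\T_m)\ge 1$, the Riemann extension theorem implies that each $w_j\circ\Phi_m^H|_{\T_m}$ extends to a holomorphic function on all of $\TT$, with the same bound. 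By the uniqueness of holomorphic extensions across an analytic subvariety, the tuple of these extensions must agree with $\Phi_m^H$ on $\TT$, so $\Phi_m^H(\TT)\subset N_+$ with the same Euclidean bound as $\Phi(\T)$. Finally, because $\Phi_m^H$ is by construction a lift of $\Phi_{\ZZ}:\ZZ\to D/\Gamma$ through $\pi_D:D\to D/\Gamma$, its image automatically lies in $D$, giving $\Phi_m^H(\TT)\subset N_+\cap D$.

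The only step that requires any care is verifying that the coordinate functions $w_j\circ\Phi_m^H|_{\T_m}$ really extend to $\TT$ as holomorphic (rather than merely meromorphic) functions; this is precisely where the boundedness from Theorem \ref{boundedness} is used, so that Riemann's extension theorem applies rather than just Levi or Hartogs. Given that input, the rest is formal: continuity propagates the image inclusion across the thin set $\TT\setminus\T_m$, and the bound is preserved verbatim. No new estimate on $\TT$ is needed beyond the one already proved in \cite{LS}.
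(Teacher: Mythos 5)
Your proposal is correct and follows essentially the same route as the paper: the paper proves this corollary by citing Corollary 6.1 of \cite{LS}, which is exactly the combination you spell out, namely Theorem \ref{boundedness} applied on the dense open piece $\T_m=i_m(\T)$ (where $\Phi_m^H\circ i_m=\Phi$) together with the Riemann extension theorem across the analytic subvariety $\TT\setminus\T_m$ of codimension at least one. Your additional care about identifying the coordinatewise bounded extensions with $\Phi_m^H$ via the identity theorem, and about the image lying in $D$ because $\Phi_m^H$ is a lift through $\pi_D$, is exactly the implicit content of that citation, so no discrepancy arises.
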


Now we review the definition of complex affine structure on a complex manifold.
\begin{definition}
Let $M$ be a complex manifold of complex dimension $n$. If there
is a coordinate cover $\{(U_i,\,\phi_i);\, i\in I\}$ of M such
that $\phi_{ik}=\phi_i\circ\phi_k^{-1}$ is a holomorphic affine
transformation on $\mathbb{C}^n$ whenever $U_i\cap U_k$ is not
empty, then $\{(U_i,\,\phi_i);\, i\in I\}$ is called a complex
affine coordinate cover on $M$ and it defines a holomorphic affine structure on $M$.
\end{definition}

Let us consider $$\a=d\P_{p}(\mathrm{T}_{p}^{1,0}\T)\subseteq \mathrm{T}_o^{1,0}D\simeq \mathfrak{n}_+$$ where $p$ is the base point in $\T$ with $\tilde{\Phi}(p)=o$.
By Griffiths transversality, $\a\subseteq \g^{-1,1}$ is an abelian subspace, therefore $\a \subseteq \mathfrak{n}_+$ is an abelian subalgebra of $\mathfrak{n}_+$
determined by the tangent map of the period map
$$d\P :\, \mathrm{T}^{1,0}{\T} \to \mathrm{T}^{1,0}D.$$
Consider the corresponding Lie group
$$A\triangleq \exp(\a)\subseteq N_+.$$
Then $A$ can be considered as a complex Euclidean subspace of $N_{+}$ with the induced Euclidean metric from $N_+$.

  Define the projection map $P:\, N_+\cap D\to A \cap D$ by
$$ P=\text{exp}\circ p\circ \text{exp}^{-1}$$
where $\text{exp}^{-1}:\, N_+ \to \mathfrak{n}_+$ is the inverse of the isometry $\text{exp}:\, \mathfrak{n}_+ \to N_+$, and $$p:\, \mathfrak{n}_+\to \a$$ is the projection map from the complex Euclidean space $\mathfrak{n}_+$  to its  Euclidean subspace $\mathfrak{a}$.

The period map $\Phi : \, \T\to N_+\cap D$ composed
with the projection map $P$ gives a holomorphic map
\begin{equation}\label{maptoA}
\Psi :\,  \T \to A\cap D,
\end{equation}
that is $\Psi=P \circ \Phi$.

Moreover in Theorem 6.4 of \cite{LS}, we proved under the condition of strong local Torelli that the holomorphic map $\Psi$ is bounded and defines a global affine structure on the Teichm\"uller space $\T$.
\begin{theorem}\label{affine-T}
The holomorphic map $\Psi$ defines a global affine structure on the Teichm\"uller space $\T$ of polarized manifolds containing $(X,L)$, provided that the strong local Torelli holds for the polarized manifold $(X,L)$ in the T-class.
\end{theorem}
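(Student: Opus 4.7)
\textbf{Proof proposal for Theorem \ref{affine-T}.}
The plan is to reduce the theorem to the statement that $\Psi:\T\to A\cap D$ is a local biholomorphism. Once this is in hand, for each $q\in\T$ one picks an open neighborhood $U_q$ on which $\Psi|_{U_q}$ is a biholomorphism onto an open subset of $A\cap D\subseteq \a\cong\C^{N}$; the family $\{(U_q,\Psi|_{U_q})\}_{q\in\T}$ is then a holomorphic coordinate cover of $\T$ whose transition maps $\Psi|_{U_{q'}}\circ(\Psi|_{U_q})^{-1}$ are identities on their natural domains, and hence are trivially affine. Thus the affine structure on $\T$ is furnished directly by $\Psi$ itself.

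The core task is therefore to show $d\Psi_q$ is an isomorphism at every $q\in\T$. Working in exponential coordinates on $N_{+}$, set $\tilde\Phi=\exp^{-1}\circ\Phi:\T\to\mathfrak{n}_{+}$ and $\tilde\Psi=p\circ\tilde\Phi:\T\to \a$, where $p:\mathfrak{n}_{+}\to \a$ is the linear projection used to define $P$. Injectivity of $d\Psi_q$ is then equivalent to the transversality
\[
d\tilde\Phi_{q}(\mathrm{T}_{q}^{1,0}\T)\cap\ker(p)=\{0\}.
\]
At the base point $p_{0}\in \T$ this is immediate: by the very definition of $\a$ we have $d\tilde\Phi_{p_{0}}(\mathrm{T}_{p_{0}}^{1,0}\T)= \a$, and $p$ restricts to the identity on $\a$; together with the equality $\dim_{\C}\mathrm{T}^{1,0}_{p_{0}}\T=\dim_{\C}\a$ (which is part of the local Torelli assertion in strong local Torelli), this yields an isomorphism at $p_{0}$.

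To extend injectivity to every $q\in\T$ I would invoke the strong local Torelli assumption, which identifies $d\Phi(\mathrm{T}^{1,0}\T)$ with the pulled-back Hodge subbundle $\mathcal{H}\subseteq\bigoplus_{k}\mathrm{Hom}(\mathscr{F}^k/\mathscr{F}^{k+1},\mathscr{F}^{k-1}/\mathscr{F}^k)$, together with Theorem \ref{boundedness}, which confines $\Phi(\T)$ to $N_{+}\cap D$ and so makes the exponential chart $\exp:\mathfrak{n}_{+}\to N_{+}$ available throughout $\Phi(\T)$. The transversality should then follow from a direct computation in this chart: the block-lower-triangular structure of elements of $N_{+}$ together with the horizontal character of $d\Phi$ provided by Griffiths transversality forces the Baker-Campbell-Hausdorff corrections relating $d\Phi_{q}$ to $d\tilde\Phi_{q}$ to lie in pieces of $\mathfrak{n}_{+}$ disjoint from $\a\subseteq\mathfrak{g}^{-1,1}$, so that $d\tilde\Phi_{q}(\mathrm{T}_q^{1,0}\T)$ continues to project isomorphically onto $\a$ under $p$.

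The hard part is precisely this last verification. While $\mathcal{H}|_{o}=\a$ at the base point, at a general $\Phi(q)\in D$ the fiber $\mathcal{H}|_{\Phi(q)}$ is a priori a different subspace of $\mathrm{T}^{1,0}_{\Phi(q)}D$, whereas $p$ is fixed once and for all by the choice of base point. The substance of the argument must be an explicit algebraic check, using the structure of $\mathfrak{n}_{+}$ and the Hodge-theoretic description of $\mathcal{H}$, that $p$ remains injective on each such fiber throughout $\Phi(\T)$. The boundedness afforded by Theorem \ref{boundedness} is crucial, since it guarantees that all these computations take place inside a single $N_{+}$-chart in which exponential coordinates are globally meaningful.
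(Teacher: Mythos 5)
Your overall strategy coincides with the paper's: the paper does not reprove this statement in detail but cites Theorem 6.4 of \cite{LS}, with the one-line explanation that Theorem \ref{boundedness} makes $\Psi$ well defined and bounded on all of $\T$ (not just on $\check{\T}=\Phi^{-1}(N_+\cap D)$), and that strong local Torelli gives the nondegeneracy of $d\Psi$, after which one pulls back the affine structure of $A\simeq\C^N$. Your reduction of the theorem to the everywhere-nondegeneracy of $d\Psi$, and your observation that this amounts to the transversality $d\tilde\Phi_q(\mathrm{T}^{1,0}_q\T)\cap\ker(p)=\{0\}$, are both correct, as is the computation at the base point.

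However, the proposal has a genuine gap, and you concede it yourself: the nondegeneracy of $d\Psi_q$ at an arbitrary $q\in\T$ is never actually established. The heuristic you offer does not close it. It is true that the discrepancy between exponential coordinates and matrix-entry coordinates on $N_+$ lives in $\bigoplus_{k\ge 2}\g^{-k,k}$ and is therefore killed by $p$, and it is true that Griffiths transversality, written in the $N_+$ coordinates, controls the $\g^{-1,1}$-part of $d\tilde\Phi_q(v)$: it equals the differential of $\Phi$ expressed in the frame obtained by translating the adapted basis at $o$ to $\Phi(q)$. But this only reduces the problem to the genuinely nontrivial point, namely that the resulting subspace of $\g^{-1,1}$ --- the fiber $\mathcal{H}|_{\Phi(q)}$ read in this moving frame --- remains transverse to $\ker(p)\cap\g^{-1,1}$ and projects isomorphically onto $\a$. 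This is not a consequence of block-triangularity plus horizontality alone; it is exactly where one must use the structure of the Hodge subbundle $\mathcal{H}$ from the strong local Torelli hypothesis (for instance the block form \eqref{remark local assumption}, for which the moving-frame fiber is literally the fixed set of matrix blocks, i.e. equals $\a$), and it is the content of the computation in \cite{LS} that the paper invokes. Since your text explicitly defers this verification (``the substance of the argument must be an explicit algebraic check\dots''), the proposal as written is a plan for a proof rather than a proof; to complete it you would need to carry out precisely that algebraic check, or else quote Theorem 6.4 of \cite{LS} as the paper does.
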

 This theorem follows from the boundedness of the period map $\Phi$, which gives the boundedness of $\Psi$, and strong local Torelli for $\Phi$ which gives the nondegeneracy of $\Psi$, so that we can pull back the affine structure of $A \simeq\mathbb{C}^N$.

\section{Strong local Torelli implies global Torelli}\label{global Torelli}
In this section, we prove our main theorem in this paper, which asserts that  strong local Torelli implies global Torelli on the Torelli space for polarized manifolds in the T-class.

First, we prove the existence of the affine structure on $\TT$ induced by the extended map $$\Psi_{m}^{H}:\, \TT\to A\cap D$$ which follows from the proof that $\Psi_m^H$ is nondegenerate. Then we apply a lemma of Griffiths and Wolf to conclude that the holomorphic map $\Psi_{m}^{H}$ is a covering map. Furthermore, by using the affine structures we show that $\Psi_{m}^{H}$ is in fact a biholomorphic map, which implies that $\TT$ does not depend on $m$.

With this we prove that the image $\T_{m}$ of $i_{m}:\, \T\to \TT$ is identical to the Torelli space $\T'$.  Here the level structures again play important roles.
Moreover, we prove that the extended period map $$\Phi_{m}^{H}:\, \TT\to D$$ is an embedding.
At last we arrive at our theorem asserting that the period map $$\Phi':\,\T'\to N_{+}\cap D$$ is injective.

Note that in this paper, unless otherwise specified, we only consider the polarized manifold $(X,L)$, which belongs to the T-class and satisfies strong local Torelli.

Let $\Z$ be defined as in Definition \ref{T-class} and $\ZZ$, $\T$, $\T'$ and $\TT$ be defined as before. First recall diagram \eqref{main-diagram},
\begin{equation}
\xymatrix{\T \ar[r]^{i_m}\ar[d]^{\pi_m}&\TT\ar[d]^{\pi_m^H}\ar[r]^{\Phi_m^H}&D\ar[d]^{\pi_D}\\
\Z\ar[r]^{i}&\ZZ\ar[r]^{\Phi_{\ZZ}}&D/\Gamma.
}
\end{equation}

From Corollary \ref{extended-boundedness},
we can define the holomorphic map $$\Psi_m^H:\, \TT\to A\cap D$$ by
the extended period map $\Phi_m^H:\,\TT\to N_+\cap D$ composed with the projection map $$P:\, N_+\cap D\to A\cap D.$$

Recall that we have defined $$\T_m=i_m(\T)\subset \T_m^H$$ in Section \ref{compactifications}. Lemma \ref{openness3} proves that $\T_{m}\subset \TT$ is a Zariski open
submanifold, and furthermore $i_m:\, \T \to \T_m$ is a covering map, and  $$\T_m=i_m(\T)=(\pi_m^H)^{-1}(\sZ).$$  Therefore we have the holomorphic map $$\Psi_m:\, \T_m\to A\cap
D$$ by restricting $\Psi^H_m$ to $\T_m$,  $$\Psi_m=\Psi_m^H|_{\T_m},$$ or equivalently $\Psi_m=P\circ \Phi_m$.


 We can choose a small neighborhood $U$ of any point in $\T_{m}$ such that
$$\pi_m^H:\, U \to V=\pi_m^H(U)\subset \ZZ,$$
is a biholomorphic map. We can shrink $U$ and $V$ simultaneously such that $\pi_{m}^{-1}(V)=\cup_{\alpha}W_{\alpha}$ and $\pi_{m} :\, W_{\alpha}\to V$ is also
biholomorphic. Choose any $W_{\alpha}$ and denote it by $W=W_{\alpha}$. Then $i_{m} :\, W \to U$ is a biholomorphic map. Since $$\Psi=\Psi_{m}^{H}\circ
i_{m}=\Psi_{m}\circ i_{m},$$ we have $\Psi|_{W}=\Psi_{m}|_{U}\circ i_{m}|_{W}$.

Theorem \ref{affine-T} implies that $\Psi|_{W}$ is biholomorphic onto its
image, if we shrink $W$, $V$ and $U$ again. Therefore $$\Psi_{m}|_{U} :\, U\to A\cap D$$ is biholomorphic onto its image. By pulling back the affine coordinate
chart in $A \simeq \C^N$, we get an induced affine structure on $\T_m$ such that $\Psi_m$ is an affine map.

In conclusion, we have proved the following lemma.

\begin{lemma}\label{affine-Tm}
The holomorphic map $\Psi_m:\, \T_m\to A\cap D$ is a local embedding. In particular, $\Psi_m$ defines a global holomorphic affine structure on $\T_m$.
\end{lemma}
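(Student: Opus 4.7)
The plan is to leverage the factorization $\Psi = \Psi_m \circ i_m$, combined with the fact that $i_m \colon \T \to \T_m$ is a covering map (Lemma~\ref{openness3}) and that $\Psi \colon \T \to A \cap D$ is locally biholomorphic onto its image (Theorem~\ref{affine-T}). Since local embeddings propagate through local biholomorphisms, and since covering maps are local biholomorphisms, this should force $\Psi_m$ to be a local embedding as well.

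More concretely, I would start by fixing an arbitrary point $q \in \T_m$ and constructing a compatible triple of neighborhoods. Using the identification $\T_m = (\pi_m^H)^{-1}(\Z)$ and the fact that $\pi_m^H \colon \TT \to \ZZ$ restricts to a local biholomorphism onto $\Z$ near $q$, I would choose an open neighborhood $U$ of $q$ in $\T_m$ such that $\pi_m^H|_U \colon U \to V := \pi_m^H(U) \subset \Z$ is biholomorphic. After shrinking $V$ (and $U$ with it) so that $V$ is evenly covered under $\pi_m \colon \T \to \Z$, I pick any sheet $W \subset \pi_m^{-1}(V)$; then $\pi_m|_W \colon W \to V$ is biholomorphic, and the commutativity $\pi_m^H \circ i_m = i \circ \pi_m$ combined with injectivity of $\pi_m^H|_U$ forces $i_m|_W \colon W \to U$ to be biholomorphic.

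Next I would exploit the identity $\Psi = \Psi_m^H \circ i_m$ restricted to $\T$. Restricting further to $W$ gives $\Psi|_W = \Psi_m|_U \circ i_m|_W$. By Theorem~\ref{affine-T}, $\Psi$ is locally biholomorphic onto its image; after shrinking $W$ (and correspondingly $U$ via the biholomorphism $i_m|_W$) we may assume $\Psi|_W$ is a biholomorphism onto $\Psi(W) \subset A \cap D$. Since $i_m|_W$ is biholomorphic, composing with its inverse yields $\Psi_m|_U = \Psi|_W \circ (i_m|_W)^{-1}$, which is a biholomorphism onto its image. As $q$ was arbitrary, $\Psi_m$ is a local embedding.

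For the second assertion, once $\Psi_m$ is known to be a local embedding into the affine space $A \simeq \C^N$, I would pull back the standard affine coordinates of $\C^N$ through the local biholomorphisms $\{\Psi_m|_U\}$ to produce a holomorphic atlas on $\T_m$. The transition maps between two such charts on overlapping opens are restrictions of the ambient affine transition maps of $\C^N$ (namely, the identity), so they are automatically holomorphic affine transformations, giving a global affine structure in which $\Psi_m$ itself is affine. I do not foresee a substantial obstacle here — the content is essentially a bookkeeping argument with covering maps — the only subtle point is ensuring that the shrinking of $W$ to make $\Psi|_W$ biholomorphic can be transported to a shrinking of $U$ while preserving the factorization, which is handled by the biholomorphism $i_m|_W \colon W \to U$.
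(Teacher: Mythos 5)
Your proposal is correct and follows essentially the same route as the paper's own argument: the same factorization $\Psi|_W=\Psi_m|_U\circ i_m|_W$ through a compatible triple of neighborhoods $W$, $V$, $U$, the same appeal to Theorem \ref{affine-T} for $\Psi|_W$ being biholomorphic onto its image, and the same pull-back of the affine coordinates of $A\simeq\C^N$ to obtain the global affine structure on $\T_m$ with $\Psi_m$ affine. No gaps to report.
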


Next we will prove that the affine structure $\Psi_m:\, \T_m\to A\cap D$ can be extended to a global affine structure on $\TT$,  which is equivalent to show that $$\Psi_m^H:\,
\TT\to A\cap D$$ is nondegenerate.

\begin{definition}
Let $M$ be a complex manifold and $N\subset M$ a cloesd subset. Let
$E_0\to M\setminus N$ be a holomorphic vector bundle. Then $E_0$ is
called holomorphically trivial along $N$, if for any point $x\in N$,
there exists an open neighborhood $U$ of  $x$ in $M$ such that
$E_0|_{U\setminus N}$ is holomorphically trivial.
\end{definition}

The following elementary lemma will be needed in the following argument.

\begin{lemma}\label{unique-extension}
The holomorphic vector bundle $E_0\to M\setminus N$ can be extended
to a unique holomorphic vector bundle $E\to M$ such  that
$E|_{M\setminus N}=E_0$, if and only if $E_0$ is holomorphically
trivial along $N$.
\end{lemma}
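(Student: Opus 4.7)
The plan is to prove the two directions of the biconditional separately. The forward implication is immediate from local triviality of holomorphic vector bundles; the substantive content lies in the backward direction, and uniqueness reduces to the same analytic principle used there.

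\emph{Forward direction.} Assume an extension $E\to M$ exists. Since any holomorphic vector bundle is locally trivial, each $x\in N$ has a neighborhood $U\subset M$ on which $E|_U\cong U\times\mathbb{C}^r$. Restricting to $U\setminus N$ yields $E_0|_{U\setminus N}\cong(U\setminus N)\times\mathbb{C}^r$, so $E_0$ is trivial along $N$.

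\emph{Backward direction.} Suppose $E_0$ is trivial along $N$. For each $x\in N$ pick an open neighborhood $U_x$ of $x$ in $M$ and a holomorphic trivialization $\phi_x\colon E_0|_{U_x\setminus N}\xrightarrow{\sim}(U_x\setminus N)\times\mathbb{C}^r$. Then $\{M\setminus N\}\cup\{U_x\}_{x\in N}$ is an open cover of $M$, and one builds $E$ by declaring it to equal $E_0$ on $M\setminus N$ and the trivial bundle $U_x\times\mathbb{C}^r$ on each $U_x$, with transition $\phi_x$ on $U_x\cap(M\setminus N)=U_x\setminus N$. The nontrivial step is the gluing on overlaps $U_x\cap U_y$ with $x,y\in N$: the transition $g_{xy}=\phi_x\circ\phi_y^{-1}$ is a holomorphic $GL_r(\mathbb{C})$-valued function on $(U_x\cap U_y)\setminus N$, and for the construction to yield a holomorphic vector bundle on all of $U_x\cap U_y$ one must extend $g_{xy}$ holomorphically across $N$. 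Invoking the appropriate Hartogs-type removable singularity theorem for the closed subset $N$ (in the paper's applications $N$ is an analytic subvariety of $M$ in a codimension regime in which such extension is available), each matrix entry extends to $U_x\cap U_y$, and the extended matrix remains invertible because its determinant is a holomorphic extension of a nowhere-vanishing function on the dense subset $(U_x\cap U_y)\setminus N$. The cocycle identity $g_{xz}=g_{xy}g_{yz}$ holds on $(U_x\cap U_y\cap U_z)\setminus N$ and therefore on the entire triple overlap by the identity theorem, so the local data glue to a holomorphic vector bundle $E\to M$ with $E|_{M\setminus N}=E_0$.

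\emph{Uniqueness.} Given two extensions $E,E'$, the tautological identification on $M\setminus N$ gives an isomorphism $\psi_0\colon E|_{M\setminus N}\to E'|_{M\setminus N}$. Expressed in local holomorphic frames for $E|_U$ and $E'|_U$ near any $x\in N$ (both available by the forward direction), $\psi_0$ becomes a holomorphic $GL_r(\mathbb{C})$-valued function on $U\setminus N$; the same removable singularity argument extends it across $N$ to a holomorphic map still taking values in $GL_r(\mathbb{C})$, so $\psi_0$ extends to a global isomorphism $E\cong E'$. The entire proof thus hinges on a single analytic fact—the extension of matrix-valued holomorphic functions across $N$—which is the main obstacle and is precisely what the hypothesis on $N$ in the lemma's intended applications is designed to guarantee.
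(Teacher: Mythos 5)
Your construction is the same gluing the paper uses (trivialize $E_0$ on $U_x\setminus N$ for a neighborhood $U_x$ of each $x\in N$, keep $E_0$ over $M\setminus N$, and patch), and your forward direction is exactly the paper's. The gap is in the step you yourself single out as the nontrivial one: on an overlap $U_x\cap U_y$ with $x,y\in N$ you need the transition matrix $g_{xy}=\phi_x\circ\phi_y^{-1}$, a priori holomorphic only on $(U_x\cap U_y)\setminus N$, to extend across $N$, and you justify this by invoking ``the appropriate Hartogs-type removable singularity theorem for the closed subset $N$''. No such theorem is available under the hypotheses of the lemma: in the definition preceding it $N$ is merely a closed subset of $M$, and even in the paper's application $N=\mathcal{T}_m^H\setminus\mathcal{T}_m$ is only known to be an analytic subvariety of codimension $\ge 1$, a regime in which holomorphic functions on the complement need not extend at all (e.g. $1/z$ across $\{z=0\}$), let alone extend with values in $\mathrm{GL}_r(\mathbb{C})$. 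So your parenthetical assumption about ``a codimension regime in which such extension is available'' is not supplied by the statement or by the application, and as written you have proved a variant of the lemma with an extra hypothesis on $N$, not the lemma. There is also a smaller but genuine error inside the same step: invertibility of the extended matrix does not follow from the fact that its determinant extends a nowhere-vanishing function, since a holomorphic extension of a nowhere-vanishing function can vanish on $N$ (e.g. $z$ on $\Delta\setminus\{0\}$); when an extension theorem is available one instead extends $g_{yx}=g_{xy}^{-1}$ as well and concludes $\tilde g_{xy}\tilde g_{yx}=I$ everywhere by the identity theorem. Your uniqueness argument rests on the same unjustified extension of a $\mathrm{GL}_r$-valued map across $N$, so it is affected in the same way.

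For comparison, the paper's proof (quoted from McKay) does not pass through any removable-singularity statement: it covers $M$ by the sets $U_x$, $x\in N$, together with small sets $V_y\subset M\setminus N$ on which $E_0$ is trivial, takes the transition functions of $E_0$ on all overlaps contained in $M\setminus N$ (this includes every $U_x\cap V_y$ and $V_y\cap V_{y'}$), and on $U_x\cap U_{x'}$ it simply uses the transition function of the two chosen trivializations on $(U_x\cap U_{x'})\setminus N$, asserting that this cocycle data determines the extension and its uniqueness. So you have correctly located the delicate point --- the paper treats it very briefly --- but the repair you propose depends on an analytic input (extension of matrix-valued holomorphic maps across an arbitrary closed set, or at least across a codimension-one analytic set) that is false in that generality, and hence the backward direction and the uniqueness part of your proposal do not close the argument for the statement as given.
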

\begin{proof}
The proof is taken from Proposition 4.4 of \cite{Mckay}. We include it here for the convenience of the readers.

If such an extension $E\to M$ exists uniquely, then we can take a
neighborhood $U$ of $x$ such that $E|_{U}$ is trivial,  and hence
$E_0|_{U\setminus N}$ is trivial.

Conversely, suppose that for any point $x\in N$, there exists an open neighborhood $U_x$ of $x$ in $M$ such  that $E_0|_{U_x\setminus N}$ is trivial. We can extend $E_0$
trivially to each $U_x$. Let us cover $N$ by the open sets $\{U_x:\, x\in N\}$. For any $y\in M\setminus N$, we can choose an open neighborhood $V_y$ of $y$ such that $V_y\cap
N=\emptyset$. Then we can cover $M$ by the open sets in
$$\mathcal{C}=\{U_x:\, x\in N\}\cup \{V_y:\, y\in M\setminus N\}.$$
We denote $\Psi_{V}^{U}$ be the transition function of $E_0$ on
$U\cap V$ if $U\cap V\neq \emptyset$,  where $U,V$ are open subsets
of $M\setminus N$ and $E_0$ is trivial on $U$ and $V$.

If $V_y\cap V_{y'}\neq \emptyset$, then $\Psi_{V_{y'}}^{V_y}$ is
well-defined,  since $V_y\cap V_{y'}\subset M\setminus N$. Then we
define the transition function
$\phi_{V_{y'}}^{V_y}=\Psi_{V_{y'}}^{V_y}$. For the case of $U_x$ and
$V_y$, we can use the same argument to get the transition functions
$\phi_{V_{y}}^{U_x}=\Psi_{V_{y}}^{U_x}$ and
$\phi^{V_{y}}_{U_x}=\Psi^{V_{y}}_{U_x}$.

The remaining case is that $U_x\cap U_{x'}\neq \emptyset$, where $x ,x' \in N$. Then the transition function $\Psi_{U_{x'}\setminus N}^{U_x\setminus N}$ is well-defined and we
define the transition function $\phi_{U_{x'}}^{U_x}=\Psi_{U_{x'}\setminus N}^{U_x\setminus N}$. Clearly the newly defined transition functions $\phi_{V}^U$, for any $U,\, V\in
\mathcal{C}$ satisfy that
$$\phi_{V}^U \circ \phi_{U}^V=\text{id},\text{ and }\phi_{V}^U \circ \phi_{W}^V \circ \phi_{U}^W=\text{id}.$$
Therefore these transition functions determine a unique holomorphic vector bundle $E\to M$ such that $E|_{M\setminus N}=E_0$.
\end{proof}

Let $\mathcal{H}$ be the holomorphic Hodge  subbundle in Definition \ref{local-Torelli},
$$\mathcal{H}\subset \bigoplus_{k=1}^n \text{Hom}(\mathscr{F}^k/\mathscr{F}^{k+1},\mathscr{F}^{k-1}/\mathscr{F}^k),$$
such that $$d\Phi:\, \mathrm{T}^{1,0}\T\stackrel{\sim}{\longrightarrow} \Phi^{*}\mathcal{H}$$ is an isomorphism of holomorphic vector bundles on $\T$.

Let $\mathcal{H}_A$ be the restriction of $\mathcal{H}$ on $A\cap D$. Then by the definition of $A$,  the holomorphic tangent bundle $\mathrm{T}^{1,0}(A\cap D)$ is isomorphic to $\mathcal{H}_A$.

Theorem \ref{affine-T} and Lemma \ref{affine-Tm} give the natural isomorphisms of the holomorphic vector bundles over $\T$ and $\T_m$ respectively
\begin{eqnarray*}
d\Psi:\,\mathrm{T}^{1,0}\T \simeq \Psi^*\mathcal{H}_A,\\
d\Psi_{m}:\,\mathrm{T}^{1,0}\T_m\simeq \Psi_m^*\mathcal{H}_A.
\end{eqnarray*}

We remark that the period map $\Phi_{\ZZ}:\, \ZZ\to D/\Gamma$ can be lifted to the universal cover to get $$\Phi_{m}^{H}: \, \TT\to D,$$ which is due to the fact that around
any point in $\ZZ\setminus \Z$, the monodromy group action is trivial as  proved in Proposition \ref{openness1} and Proposition \ref{openness2}.

In fact the period map $\Phi_{\Z}$ has extension $\Phi_{\ZZ}$ as proved by Griffiths,  in Theorem 9.6 of \cite{Griffiths3}, therefore $\Psi_m:\, \T_m \to A\cap D$ has extension
to
$$\Psi_m^H:\, \TT \to A\cap D$$ which implies that the Hodge bundle $\Psi_m^*\mathcal{H}_A$ has the natural extension $(\Psi_m^H)^*\mathcal{H}_A$
 over $\TT$.

The following lemma is proved by using extension of the period map.

\begin{lemma}\label{Hodge extension}
The isomorphism $\mathrm{T}^{1,0}\T_m\simeq \Psi_m^*\mathcal{H}_A$ of holomorphic vector bundles over $\T_m$ has a unique extension to an isomorphism of holomorphic vector bundles over $\T^H_m$ as
$$\mathrm{T}^{1,0}\TT\simeq (\Psi_m^H)^*\mathcal{H}_A.$$
\end{lemma}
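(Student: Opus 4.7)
The plan is to derive this lemma from the uniqueness of holomorphic extensions of vector bundles across codimension $\geq 1$ analytic subvarieties, as formalized in Lemma \ref{unique-extension}.

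First, I observe that two natural holomorphic vector bundles on $\TT$ restrict on $\T_m$ to the bundles appearing in the desired isomorphism. The tangent bundle $\mathrm{T}^{1,0}\TT$ of the smooth complex manifold $\TT$ restricts to $\mathrm{T}^{1,0}\T_m$ on the open submanifold $\T_m$. Since the extended map $\Psi_m^H:\TT\to A\cap D$ is holomorphic and agrees with $\Psi_m$ on $\T_m$, the pullback bundle $(\Psi_m^H)^*\mathcal{H}_A$ is a holomorphic vector bundle on $\TT$ restricting to $\Psi_m^*\mathcal{H}_A$ on $\T_m$.

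Next, using the isomorphism $d\Psi_m:\mathrm{T}^{1,0}\T_m\simeq \Psi_m^*\mathcal{H}_A$ from Lemma \ref{affine-Tm}, I identify these two restrictions as a single holomorphic vector bundle $E_0$ on $\T_m$. The existence of two a priori different extensions $\mathrm{T}^{1,0}\TT$ and $(\Psi_m^H)^*\mathcal{H}_A$ of $E_0$ to $\TT$, coupled with the fact that $\TT\setminus\T_m$ has codimension at least one, allows me to invoke Lemma \ref{unique-extension}: the existence of an extension forces $E_0$ to be holomorphically trivial along $\TT\setminus\T_m$, and by the uniqueness assertion the two extensions must be canonically isomorphic via an isomorphism restricting to the identity on $E_0|_{\T_m}$. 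Unwinding the identification yields the desired isomorphism $\mathrm{T}^{1,0}\TT\simeq (\Psi_m^H)^*\mathcal{H}_A$ extending $d\Psi_m$.

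Uniqueness of the extended isomorphism follows since any two holomorphic bundle isomorphisms that agree on the dense open subset $\T_m$ must coincide on all of $\TT$ by the identity principle. The main subtlety I anticipate is confirming that the abstract isomorphism provided by Lemma \ref{unique-extension} coincides with the natural candidate $d\Psi_m^H:\mathrm{T}^{1,0}\TT\to (\Psi_m^H)^*\mathcal{H}_A$, the differential of the extended map. However, both candidates restrict to $d\Psi_m$ on $\T_m$, so by the uniqueness just noted they must agree. This identification has the important consequence that $d\Psi_m^H$ is everywhere an isomorphism on $\TT$, which is essential for propagating the affine structure from $\T_m$ to all of $\TT$ in the subsequent arguments.
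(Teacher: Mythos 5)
Your proposal is correct and takes essentially the same approach as the paper: identify $\mathrm{T}^{1,0}\T_m$ with $\Psi_m^*\mathcal{H}_A$ via $d\Psi_m$, note that both $\mathrm{T}^{1,0}\TT$ and $(\Psi_m^H)^*\mathcal{H}_A$ extend this bundle across the analytic set $\TT\setminus\T_m$, obtain holomorphic triviality along that set (the paper checks this explicitly by pulling back local trivializations of $\mathcal{H}_A$ through $\Psi_m^H$, which is exactly the ``only if'' direction of Lemma \ref{unique-extension} you invoke), and conclude by uniqueness of the extension. Your closing identification of the extended isomorphism with $d\Psi_m^H$ is not needed for the lemma itself and is essentially the content of the subsequent Theorem \ref{affine-TmH}, which the paper establishes by a separate limiting argument.
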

\begin{proof}
For any $p\in \TT\setminus \T_{m}$, we can choose an open neighborhood $U_{p}\subset \TT$ of $p$ such that $\Psi_{m}^{H}(U_{p})$ is contained in an open neighborhood $W\subset A\cap D$ on which $\mathcal{H}_A$ is trivial.

Let $$\bar{o}=\Psi_{m}^{H}(p)\in W \text{ and } H_{A}=\mathcal{H}_A|_{\bar{o}}.$$
Then we have the following trivializations of the Hodge subbundles
$$\mathcal{H}_A|_{W}\simeq W\times H_{A}$$ and
$$(\Psi_m^H)^*\mathcal{H}_A|_{U_{p}}\simeq(\Psi_m^H)^*(\mathcal{H}_A|_{W})|_{U_{p}}\simeq(\Psi_m^H)^*(W\times H_{A})|_{U_{p}}\simeq U_{p}\times H_{A}.$$
Let $U=U_{p}\cap \T_{m}$.
Then the trivialization of the Hodge subbundle on $U$ is
$$\Psi_m^*\mathcal{H}_A|_{U}=((\Psi_m^H)^*\mathcal{H}_A|_{U_{p}})|_{U}\simeq U\times H_{A},$$ which  implies that $\Psi_m^*\mathcal{H}_A$ is holomorphically trivial along $\TT \setminus \T_{m}$.
Therefore by Lemma \ref{unique-extension}, the Hodge subbundle $\Psi_m^*\mathcal{H}_A$ over $\T_m$ has a unique extension over $\TT$, which is $(\Psi_m^H)^*\mathcal{H}_A$ by
continuity.

Note that Lemma \ref{unique-extension} also implies that the holomorphic tangent bundle $\mathrm{T}^{1,0}\T_m$, which is isomorphic to $\Psi_m^*\mathcal{H}_A$, is holomorphically trivial along $\TT\setminus \T_m$.
Hence $\mathrm{T}^{1,0}\T_m$ has a unique extension, which is obviously $\mathrm{T}^{1,0}\TT$. By the uniqueness of such extension we can conclude that
$$\mathrm{T}^{1,0}\TT\simeq (\Psi_m^H)^*\mathcal{H}_A.$$
\end{proof}

With the above preparations, we can prove the following theorem, which is key to our main theorem.

\begin{thm}\label{affine-TmH}
The holomorphic map $\Psi_m^H:\, \TT\to A\cap D$ is nondegenerate. Hence $\Psi_m^H$ defines a global affine structure on $\TT$.
\end{thm}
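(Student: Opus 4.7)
The plan is to show that the holomorphic tangent map $d\Psi_m^H$ is an isomorphism of holomorphic vector bundles over $\TT$. Once this nondegeneracy is established, pulling back the standard affine coordinates on $A\simeq\C^N$ via $\Psi_m^H$ will yield the desired global holomorphic affine structure on $\TT$, exactly as in the proof of Theorem \ref{affine-T}.

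First I would view $d\Psi_m^H$ as a holomorphic bundle morphism
$$d\Psi_m^H:\,\mathrm{T}^{1,0}\TT \longrightarrow (\Psi_m^H)^*\mathcal{H}_A$$
by means of the natural identification $\mathrm{T}^{1,0}(A\cap D)\simeq \mathcal{H}_A$ coming from the definition $A=\exp(\mathfrak{a})$ with $\mathfrak{a}=d\Phi_p(\mathrm{T}^{1,0}_p\T)$ identified with the fiber $\mathcal{H}_o$ via strong local Torelli. On the Zariski open dense submanifold $\T_m\subset \TT$, Lemma \ref{affine-Tm} asserts that $d\Psi_m^H|_{\T_m}=d\Psi_m$ is already a fiberwise isomorphism of holomorphic vector bundles.

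Next, Lemma \ref{Hodge extension} produces a unique holomorphic bundle isomorphism $\sigma:\,\mathrm{T}^{1,0}\TT\stackrel{\sim}{\longrightarrow}(\Psi_m^H)^*\mathcal{H}_A$ on all of $\TT$ extending $d\Psi_m$ from $\T_m$. Both $\sigma$ and $d\Psi_m^H$ are then holomorphic sections of the vector bundle $\text{Hom}(\mathrm{T}^{1,0}\TT,\,(\Psi_m^H)^*\mathcal{H}_A)$ which agree on the dense open subset $\T_m$; since $\TT$ is connected and $\TT\setminus \T_m$ is an analytic subvariety of positive codimension by the remarks after Lemma \ref{openness3}, the identity theorem for holomorphic sections of a holomorphic vector bundle forces $\sigma=d\Psi_m^H$ on all of $\TT$. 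In particular $d\Psi_m^H$ is a fiberwise isomorphism at every point of $\TT$, so $\Psi_m^H$ is a local biholomorphism everywhere.

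Finally, since $A\simeq \C^N$ carries a canonical global affine structure, the local biholomorphisms provided by $\Psi_m^H$ can be used to pull this structure back to $\TT$: the transition functions among local inverses of $\Psi_m^H$ coincide with restrictions of affine transformations of $A$, yielding a genuine holomorphic affine atlas on $\TT$ with respect to which $\Psi_m^H$ is affine. The main obstacle is the extension step from $\T_m$ to all of $\TT$, which is not purely formal because $d\Psi_m^H$ could a priori degenerate along $\TT\setminus \T_m$; the strong local Torelli hypothesis, through the existence of the Hodge subbundle $\mathcal{H}$ and the subsequent Lemma \ref{Hodge extension}, is precisely what rules this out via the identity theorem.
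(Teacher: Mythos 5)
Your reduction to the identity theorem hides the actual content of the theorem inside your reading of Lemma \ref{Hodge extension}, and that reading is stronger than what that lemma's proof delivers. What is actually established there (via Lemma \ref{unique-extension}) is that the bundles $\mathrm{T}^{1,0}\T_m$ and $\Psi_m^*\mathcal{H}_A$ each extend uniquely across $\TT\setminus\T_m$, namely to $\mathrm{T}^{1,0}\TT$ and $(\Psi_m^H)^*\mathcal{H}_A$, and that these extended bundles are isomorphic; it does not produce an isomorphism $\sigma$ over $\TT$ whose restriction to $\T_m$ is the specific map $d\Psi_m$. Without that, your identity-theorem step has nothing to bite on: applied to $d\Psi_m^H$ itself it only says that $d\Psi_m^H$ is the unique holomorphic extension of $d\Psi_m$, which is automatic and does not exclude degeneration along $\TT\setminus\T_m$ --- a holomorphic bundle morphism can be an isomorphism on a dense Zariski-open subset and still drop rank on the complement (multiplication by a coordinate vanishing on the boundary divisor already does this on the trivial bundle). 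If instead you insist on the strong reading of Lemma \ref{Hodge extension}, namely that $d\Psi_m$ itself extends to a bundle isomorphism over $\TT$, then the theorem becomes a one-line corollary and the reading simply begs the question; this is exactly why the paper gives a separate, substantive proof of nondegeneracy.

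The missing idea is the use of the two affine structures. The paper chooses, near $\Psi_m^H(q)$ for $q\in\TT\setminus\T_m$, a frame $\{\Lambda_1,\dots,\Lambda_N\}$ of $\mathcal{H}_A$ that is parallel for the natural affine structure of $A\cap D$, pulls it back to a frame $\mu_i=(\Psi_m^H)^*\Lambda_i$ of $\mathrm{T}^{1,0}\TT$ near $q$ (this is the only place Lemma \ref{Hodge extension} is used), and writes $d\Psi_m(\mu_i|_{q_k})=\sum_j A_{ij}(q_k)\,\Lambda_j$ along a sequence $q_k\in\T_m$ converging to $q$. Because $\Psi_m$ is an affine map for the induced affine structure on $\T_m$ (Lemma \ref{affine-Tm}, Theorem \ref{affine-T}) and both frames are parallel, the matrix $(A_{ij}(q_k))$ is a constant nonsingular matrix; passing to the limit shows $d\Psi_m^H$ is represented at $q$ by the same nonsingular matrix, hence is nondegenerate there. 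It is precisely this constancy, i.e.\ the compatibility of $d\Psi_m$ with the parallel transports of the affine structures on $\T_m$ and on $A\cap D$, that rules out degeneration at the boundary, and no step of your argument supplies a substitute for it.
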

\begin{proof}
For any point $q\in \TT\setminus \T_m$, we can choose a neighborhood $U_q$ of $q$ in $\TT$ such that $$\mathrm{T}^{1,0}\TT\simeq (\Psi_m^H)^*\mathcal{H}_A$$ is trivial on $U_q$.
Moreover, we can shrink $U_q$ so that $\Psi_m^H(U_q)\subseteq V$, where $V\subset A\cap D$ on which $\mathcal{H}_A$ is trivial. We choose a basis $$\{\Lambda_1, \cdots
,\Lambda_N \}$$ of $\mathcal{H}_A|_{V},$ which is parallel with respect to the natural affine structure on $A\cap D$. Let $$\mu_i=((\Psi_m^H)^*\Lambda_i)|_{U_{q}},\ 1\le i\le
N.$$ Then $\{\mu_1,\cdots,\mu_n\}$ is a basis of $$\mathrm{T}^{1,0}\TT|_{U_{q}}\simeq (\Psi_m^H)^*\mathcal{H}_A|_{U_{q}}.$$

Let $U=U_q\cap \T_m$ and consider a sequence of points  $q_k\in U$ such that $q_k\longrightarrow q$ as $k\longrightarrow \infty$.
Since $\Psi_m:\, \T_m\to A\cap D$ defines a global affine structure on $\T_m$, for any $k\ge1$ we have
$$d\Psi_m(\mu_i|_{q_k})=\sum_{j} A_{ij}(q_k)\Lambda_j|_{o_k},$$
for some non-singular matrix $(A_{ij}(q_k))_{1\le i,j\le N}$, where $o_k=\Psi_m(q_k)$.

Since the affine structure on $\T_m$ is induced from that of $A\cap D$, both the bases
$$\{\mu_1|_{U},\cdots,\mu_N|_{U} \}\text{ and }\{\Lambda_1, \cdots ,\Lambda_N \}$$
are parallel with respect to the affine structures on $\T_m$ and $A\cap D$ respectively.
Hence we have that the matrix  $(A_{ij}(q_k))=(A_{ij})$ is constant matrix.
Therefore
\begin{eqnarray*}
d\Psi^H_m(\mu_i|_{q})&=&\lim_{k\to \infty}d\Psi^H_m(\mu_i|_{q_k})\\
&=&\lim_{k\to \infty}d\Psi_m(\mu_i|_{q_k})\\
&=&\lim_{k\to \infty}\sum_{j} A_{ij}\Lambda_j|_{o_k}\\
&=&\sum_{j} A_{ij}\Lambda_j|_{o_{_\infty}},
\end{eqnarray*}
where $o_\infty =\Psi^H_m(q)$. Since the matrix $(A_{ij})_{1\le i,j\le N}$ is nonsingular, the tangent map $d\Psi^H_m$ is nondegenerate.
\end{proof}

Before moving on, we would like to point out the geometric intuition behind our definition of strong local Torelli. First note that in the proof of Theorem \ref{affine-TmH}, we used substantially the identification $$d\Psi_{m}:\, \mathrm{T}^{1,0}\T_m\simeq \Psi_m^*\mathcal{H}_A$$ on $\T_m$,
which is explicitly given by the contraction map $KS(v)\lrcorner$ at a point $(q,v) \in \mathrm{T}^{1,0}_q \T_m$ given by \eqref{contraction}. Here recall that  $$KS: \, \mathrm{T}^{1,0}_q \T_m \to H^1(X_q, \Theta_{X_q})$$ is the Kodaira-Spencer map for any $q\in \T_m$. From this explicit expression, one can see that this identification of bundles $$\mathrm{T}^{1,0}\T_m\simeq \Psi_m^*\mathcal{H}_A$$ on $\T_m$ only depends on the tangent vector $v$.
Moreover, since $\Psi_m^*\mathcal{H}_A$ extends trivially to $(\Psi_m^H)^*\mathcal{H}_A$ on $\TT$ due to the trivial monodromy along $\ZZ\setminus \Z$,
it gives the identification of the tangent bundle $\mathrm{T}^{1,0}\TT$ with the Hodge subbundle $(\Psi_m^H)^*\mathcal{H}_A$ on $\TT$ by Lemma \ref{Hodge extension}.

From this special feature of the period map, one can see that the identification of the tangent bundle of the $\T_{m}$ with the Hodge subbundle in the definition of strong local Torelli is compatible with both the affine structures on $\T_m$ and $A$, as well as the affine map $\Psi_m$.

Now we recall a lemma due to Griffiths and Wolf, which is proved as Corollary 2 in \cite{GW}.
\begin{lemma}\label{covering-lemma}
Let $f:\, X\to Y$ be a local diffeomorphism of connected Riemannian manifolds. Assume that $X$ is complete for the induced metric. Then $f(X)=Y$, $f$ is a covering map and $Y$ is complete.
\end{lemma}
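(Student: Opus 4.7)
The plan is to endow $X$ with the pullback metric $f^{*}g_{Y}$, with respect to which $f$ becomes a local isometry, and then combine Hopf--Rinow with the fact that local isometries send geodesics to geodesics. This reduces the statement to the classical fact that a local isometry from a complete Riemannian manifold to a connected Riemannian manifold is automatically a Riemannian covering onto the target.

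First I would handle completeness of $Y$ and surjectivity of $f$ simultaneously. Pick any $x_{0}\in X$ and set $y_{0}=f(x_{0})$. Since $X$ is complete, Hopf--Rinow gives that $\exp_{x_{0}}$ is defined on all of $T_{x_{0}}X$. Because $f$ is a local isometry, it carries the geodesic $t\mapsto \exp_{x_{0}}(tv)$ to the geodesic $t\mapsto \exp_{y_{0}}(t\,df_{x_{0}}(v))$ in $Y$, so $\exp_{y_{0}}$ is defined on all of $T_{y_{0}}Y$. Hopf--Rinow applied on $Y$ now yields completeness of $Y$. Moreover, any $y\in Y$ can be joined to $y_{0}$ by a minimizing geodesic, whose initial velocity pulls back via $df_{x_{0}}^{-1}$ to a tangent vector at $x_{0}$; the corresponding geodesic in $X$ is defined throughout by completeness, and $f$ sends it to the given geodesic in $Y$, whence $f$ hits $y$. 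So $f(X)=Y$ and $Y$ is complete.

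The main work is to show $f$ is a covering map. Given $y\in Y$, choose $r>0$ smaller than the injectivity radius at $y$, so that $\exp_{y}$ restricts to a diffeomorphism from the open $r$-ball in $T_{y}Y$ onto $B_{r}(y)$. For each $x\in f^{-1}(y)$, the local isometry $f$ carries $B_{r}(x)\subset X$ diffeomorphically onto $B_{r}(y)$, since locally $f=\exp_{y}\circ df_{x}\circ \exp_{x}^{-1}$ and each factor is a diffeomorphism. Two verifications remain. Disjointness: if $z\in B_{r}(x_{1})\cap B_{r}(x_{2})$, then the two geodesics of length less than $r$ from $z$ to $x_{1},x_{2}$ in $X$ both project under $f$ to the unique minimizing geodesic from $f(z)$ to $y$ inside $B_{r}(y)$, so by local injectivity of $f$ the two lifts coincide and $x_{1}=x_{2}$. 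Exhaustion: for $z\in f^{-1}(B_{r}(y))$, lift via $df_{z}^{-1}$ the reverse minimizing geodesic from $f(z)$ to $y$; completeness of $X$ makes the lift defined on the full parameter interval, its endpoint $x$ lies in $f^{-1}(y)$, and the lift has length equal to $d(f(z),y)<r$, so $z\in B_{r}(x)$.

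The main obstacle is the last disjointness-and-exhaustion step, which constitutes the actual covering property; the surjectivity and completeness pieces are essentially one-line consequences of Hopf--Rinow. The unifying principle is that for a local isometry, path-lifting reduces to lifting geodesics, and completeness of the source prevents such lifts from breaking down in finite time.
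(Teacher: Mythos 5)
Your proof is correct, but note that the paper itself offers no proof of this lemma: it is quoted as Corollary 2 of Griffiths--Wolf \cite{GW}, so the comparison is with that source rather than with an in-paper argument. Griffiths and Wolf proceed through a general lifting criterion: a local diffeomorphism that is ``complete'' (the source is complete in the pulled-back metric) lifts every curve of finite length, since a partial lift has the same length as its image and completeness lets one pass to the limit parameter, and a local homeomorphism with this curve-lifting property over a connected base is a covering; surjectivity and completeness of $Y$ come out of the same lifting argument. You instead run the standard geodesic/exponential-map proof: Hopf--Rinow together with the identity $f\circ\exp_x=\exp_y\circ df_x$ (valid on all of $T_xX$ by completeness) gives surjectivity and completeness of $Y$, and normal balls about $y$ are shown to be evenly covered. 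Both routes are sound; theirs is somewhat more general and avoids injectivity-radius considerations, while yours is elementary and self-contained granted Hopf--Rinow. One step to tighten: writing $f=\exp_y\circ df_x\circ\exp_x^{-1}$ on $B_r(x)$ presupposes that $\exp_x$ is a diffeomorphism of the $r$-ball in $T_xX$, which is not known a priori since the injectivity radius at $x$ may be smaller than $r$; the correct order is to use $f\circ\exp_x=\exp_y\circ df_x$ to conclude that $\exp_x$ is injective and nonsingular on that ball and maps it onto the metric ball $B_r(x)$ (the latter using completeness), after which your disjointness and exhaustion arguments go through exactly as written.
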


Since the Hodge metric on $\T^H_m$ is induced by $\Psi_m^H$ and is complete, we have the following corollary from Lemma \ref{covering-lemma}.
\begin{corollary}\label{cover-TmH}
The holomorphic map $\Psi_m^H:\, \TT\to A\cap D$ is a universal covering map, and the image $\Psi_{m}^{H}(\TT)=A\cap D$ is complete with respect to the Hodge metric.
\end{corollary}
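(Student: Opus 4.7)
The plan is to apply the Griffiths–Wolf covering lemma (Lemma \ref{covering-lemma}) to the holomorphic map $\Psi_m^H:\TT\to A\cap D$, and then use the simple-connectedness of $\TT$ to upgrade ``covering map'' to ``universal covering map.'' The three hypotheses of Griffiths–Wolf need to be checked: connectedness of both spaces, that $\Psi_m^H$ is a local diffeomorphism, and that $\TT$ is complete in the Riemannian metric induced by $\Psi_m^H$.

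First, $\TT$ is connected (as the universal cover of the connected space $\sZ_m^H$), and $A\cap D$ is connected as the intersection of a connected complex Euclidean Lie group with the connected period domain. Next, Theorem \ref{affine-TmH} gives exactly that the tangent map $d\Psi_m^H$ is everywhere an isomorphism; since $\dim_{\mathbb C}\TT=\dim_{\mathbb C}(A\cap D)=N$, this makes $\Psi_m^H$ a local biholomorphism, hence a local diffeomorphism.

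The main technical point is the completeness of $\TT$ in the metric induced by $\Psi_m^H$. Here I would argue as follows. The Hodge metric on $\TT$ is by definition the pull-back via $\Phi_m^H$ of the Hodge metric on $D$, and it is complete because $\TT$ is the universal cover of $\sZ_m^H$, which is by construction the Hodge metric completion of $\Z$. On the other hand, by strong local Torelli, the image $d\Phi_m^H(\mathrm{T}^{1,0}\TT)$ lies in the Hodge subbundle $\mathcal H_A$ along $A\cap D$, and the projection $P:N_+\cap D\to A\cap D$ restricts to the identity on $\mathcal H_A$ at every point of $A$. Therefore on tangent spaces one has $d\Psi_m^H=dP\circ d\Phi_m^H=d\Phi_m^H$ under the identification with $\mathcal H_A$, so the metric induced on $\TT$ by $\Psi_m^H$ (pulled back from the Hodge metric restricted to $A\cap D$) coincides with the Hodge metric already known to be complete on $\TT$.

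With all three hypotheses verified, Lemma \ref{covering-lemma} yields that $\Psi_m^H(\TT)=A\cap D$, that $\Psi_m^H$ is a covering map, and that $A\cap D$ is complete in the Hodge metric. Finally, since $\TT$ was defined as the universal cover of $\sZ_m^H$ it is simply connected, and a covering by a simply connected space is a universal covering, which gives the final statement. I expect the main potentially subtle point to be the identification of the two metrics in the previous paragraph; one must be careful that the projection $P$ does not shrink lengths in horizontal directions, which is precisely ensured by the strong local Torelli hypothesis placing the image of $d\Phi$ inside $\mathcal H_A$.
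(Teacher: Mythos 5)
Your proposal follows essentially the same route as the paper: the paper also deduces this corollary by applying the Griffiths--Wolf lemma (Lemma \ref{covering-lemma}) to $\Psi_m^H$, using the nondegeneracy supplied by Theorem \ref{affine-TmH} together with the assertion that the complete Hodge metric on $\TT$ is exactly the metric induced by $\Psi_m^H$, and the universal-covering statement then comes from the simple connectedness of $\TT$. The one caveat is that your justification of the metric identification (that $dP$ acts as the identity on $\mathcal{H}_A$ ``at every point of $A$'') is stated loosely, since $\Phi_m^H(q)$ in general lies in $N_+\cap D$ rather than in $A\cap D$ and the Hodge metric is compared at two different points; the paper, however, asserts this identification without further detail, so your argument does not omit any step that the paper itself provides.
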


Then by the universal properties of the universal covering
$$\Psi_m^H:\, \TT\to A\cap D,$$ or the uniqueness of the universal
cover, we have the following corollary.

\begin{corollary}\label{TmH}
For any $m_1, m_2\ge 3$, the extended Teichm\"uller spaces $\T_{m_1}^H$ and $\T_{m_2}^H$ are  biholomorphic to each other.
\end{corollary}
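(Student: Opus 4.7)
The plan is to apply Corollary \ref{cover-TmH} separately to $m_1$ and $m_2$, which identifies each of $\T_{m_1}^H$ and $\T_{m_2}^H$ with a universal cover of the same target space $A\cap D$, and then to invoke the uniqueness of universal covers in the holomorphic category.

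First I would verify that the target $A\cap D$ is intrinsic to the polarized manifold $(X,L)$ and independent of the level $m$. By Lemma \ref{independent of m}, the Teichm\"uller space $\T$ itself does not depend on $m$, so the abelian subalgebra $\mathfrak{a}=d\Phi_p(\mathrm{T}_p^{1,0}\T)\subseteq\mathfrak{n}_+$ defined at a fixed base point $p\in\T$ is the same for any $m\ge 3$. Consequently the abelian subgroup $A=\exp(\mathfrak{a})$ and its intersection $A\cap D\subseteq N_+\cap D$ are independent of $m$.

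Next, by Corollary \ref{cover-TmH} the holomorphic maps $\Psi_{m_1}^H:\,\T_{m_1}^H\to A\cap D$ and $\Psi_{m_2}^H:\,\T_{m_2}^H\to A\cap D$ are both universal covering maps onto this common space. In particular $\T_{m_1}^H$ and $\T_{m_2}^H$ are simply connected, which is also clear from their construction in diagram \eqref{main-diagram} as the universal covers of $\sZ_{m_1}^H$ and $\sZ_{m_2}^H$ respectively. Choosing compatible base points---namely $i_{m_1}(p)\in\T_{m_1}^H$ and $i_{m_2}(p)\in\T_{m_2}^H$, both of which map under the corresponding extended period map to the common point $o=\Phi(p)\in A\cap D$---the uniqueness of the universal cover of a connected complex manifold yields a biholomorphism
$$\T_{m_1}^H\stackrel{\sim}{\longrightarrow}\T_{m_2}^H$$
intertwining $\Psi_{m_1}^H$ and $\Psi_{m_2}^H$. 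Here the biholomorphic nature of the lift is automatic because $\Psi_{m_i}^H$ are holomorphic local diffeomorphisms (by Theorem \ref{affine-TmH}), so any continuous lift of the identity on $A\cap D$ is automatically a biholomorphism.

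I do not anticipate any genuine obstacle in this argument: once $A\cap D$ is recognized as independent of $m$, the statement is an immediate consequence of Corollary \ref{cover-TmH} together with the uniqueness of universal covers. The only point demanding minor care is the compatibility of the base-point choices, which is handled by tracing a common base point $p\in\T$ through the maps $i_{m_1}$ and $i_{m_2}$.
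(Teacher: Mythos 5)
Your proposal is correct and is essentially the paper's own argument: the paper deduces Corollary \ref{TmH} in one line from Corollary \ref{cover-TmH} via the uniqueness of the universal cover of $A\cap D$, exactly as you do. Your additional remarks—that $A\cap D$ is independent of $m$ because $\T$ is (Lemma \ref{independent of m}), and that the lift is automatically biholomorphic since the covering maps are local biholomorphisms—are correct elaborations of points the paper leaves implicit.
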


Since Corollary \ref{TmH} implies that $\TT$ does not depend on $m$, we can simply denote each $\TT$ by $\T^{H}$, rewrite $i_m:\, \T\to \T^H_m$ as $i_\T:\, \T \to \T^H$, and denote the corresponding period maps and affine maps by $$\Phi^{H} :\, \T^{H}\to D\text{ and }\Psi^{H} :\, \T^{H}\to A\cap D$$ respectively. Moreover for each $\Z$, one has the following commutative diagram.
\begin{equation}\label{main digram}
\xymatrix{\T \ar[r]^{i_{\T}}\ar[d]^{\pi_m}&\T^{H}\ar[d]^{\pi_m^H}\ar[r]^{\Phi^H}&D\ar[d]^{\pi_D}\\
\Z\ar[r]^{i}&\ZZ\ar[r]^{\Phi_{\ZZ}}&D/\Gamma.
}
\end{equation}

Now we prove that the universal covering map $\Psi^H:\, \T^{H}\to A\cap D$ is in fact injective,  hence we have $$\T^{H}\simeq A\cap D,$$ and $A\cap D$ is simply connected.

We will give two proofs of this result.
The first proof is to show directly that $A\cap D$ is simply connected. The second proof uses the affine structures on $\T^H$ and $A\cap D$ in a more substantial way. Both the proofs reflect different geometric structures of the period domain and period map which will be useful for further study, so we include both the proofs in this section.

For the first proof, we introduce some notations following \cite{LS}.
Let $K\subset G_{\mathbb{R}}$ be the maximal compact subgroup, whose Lie algebra $\mathfrak{k}_{0}$ is
$$\mathfrak{k}_{0}=(\oplus_{k \text{ even }}\mathfrak{g}^{k,-k})\cap \mathfrak{g}_{0}.$$
In fact, we define $\mathfrak{k}=\oplus_{k \text{ even }}\mathfrak{g}^{k,-k}$, then $\mathfrak{k}_{0}=\mathfrak{k}\cap \mathfrak{g}_0$. Similarly, we define
$$\mathfrak{p}=\oplus_{k \text{ odd }}\mathfrak{g}^{k,-k}\text{ and }\mathfrak{p}_0=\mathfrak{p}\cap \mathfrak{g}_0.$$ Then we have the decompositions
\begin{align*}
\mathfrak{g}=\mathfrak{k}\oplus\mathfrak{p}, \quad \mathfrak{g}_0=\mathfrak{k}_0\oplus \mathfrak{p}_0.
\end{align*}
Let $$\mathfrak{p}_{+}=\mathfrak{p}\cap \mathfrak{n}_{+}=\oplus_{k\ge 1,k \text{ odd }}\mathfrak{g}^{-k,k}.$$ Then  $\mathfrak{p}_+$ can be viewed as an Euclidean subspace of
$\mathfrak{n}_+$ with the induced metric from $\mathfrak{n}_{+}$. Similarly $\mathrm{exp}(\mathfrak{p}_+)$ can be viewed as an Euclidean subspace of $N_{+}$ with the induced
metric from $N_{+}$.

From Corollary 3.2 in \cite{LS}, one gets that the natural projection $$\pi:\,D\to G_\mathbb{R}/K,$$ when restricted to the underlying real manifold of
$\mathrm{exp}(\mathfrak{p}_+) \cap D$, is given by the diffeomorphism
\begin{equation}\label{pi+}
\pi_{+}:\, \mathrm{exp}(\mathfrak{p}_+) \cap D\longrightarrow \mathrm{exp}(\mathfrak{p}_0) \stackrel{\simeq}{\longrightarrow} G_\mathbb{R}/K.
\end{equation}

Indeed, let $\bar{o}$ be the base point in $\mathrm{exp}(\mathfrak{p}_+) \cap D$ as in Lemma 3.1 of \cite{LS}. Given  $Y\in \mathfrak{p}_+$, let  $$\exp (Y) \bar{o}\in
\mathrm{exp}(\mathfrak{p}_+) \cap D$$ denote the left translation of the base point $\bar{o}$ by $\text{exp}(Y)$ in $\mathrm{exp}(\mathfrak{p}_+) \cap D$. Similarly, let $X\in
\mathfrak{p}_0$ with the relation that
$$X =T_0( Y +\tau_0(Y))$$ where $\tau_{0}$ is the complex conjugate of $\mathfrak{g}$ with respect to the real form $\mathfrak{g}_{0}$, for some real number $T_{0}$
determined uniquely in Lemma 3.1 of \cite{LS} from the Harish-Chandra argument, and $$\exp (X) \bar{o}\in \mathrm{exp}(\mathfrak{p}_0) \simeq G_\mathbb{R}/K$$ denote the left
translation by $\exp(X)$ of the base point $\bar{o}$
 in $G_\mathbb{R}/K$. Then we have
$$\pi_+(  \exp (Y) \bar{o}) = \exp (X) \bar{o}.$$

\begin{thm}\label{iso-TmH}
Suppose that the polarized manifold $(X,L)$ belongs to T-class and the strong local Torelli holds for $(X,L)$.
Then the holomorphic covering map $\Psi^H:\, \mathcal{T}^H\to A\cap D$ is an injection and hence a biholomorphic map.
\end{thm}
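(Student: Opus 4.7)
The aim is to upgrade the universal covering $\Psi^H:\,\T^H\to A\cap D$ of Corollary \ref{cover-TmH} to a biholomorphism by proving injectivity. Since $\T^H$ is simply connected, $\Psi^H$ is injective precisely when $A\cap D$ is simply connected; given injectivity, Theorem \ref{affine-TmH} ensures that $\Psi^H$ is a bijective local biholomorphism and hence a biholomorphism. Two arguments for the simple connectivity of $A\cap D$ are natural, and either closes out the theorem.

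The first, direct, approach shows that $A\cap D$ is star-shaped with respect to the base point $\bar o$ in $A\simeq\C^N$. Identifying $A$ with $\C^N$ via $\exp:\,\a\to A$, the straight-line segment from $\bar o$ to $\exp(Y)\bar o$ is precisely $\{\exp(tY)\bar o:t\in[0,1]\}$, so one needs to prove that $\exp(Y)\bar o\in D$ implies $\exp(tY)\bar o\in D$ for all $t\in[0,1]$. Since $\exp(tY)\in G_\C$ preserves $Q$, the first Hodge--Riemann relation \eqref{HR1'} is automatic along the segment, and only the positivity \eqref{HR2'} of the filtration $\exp(tY)(F^\bullet_{\bar o})$ needs to be propagated. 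Exploiting the horizontality $\a\subseteq\g^{-1,1}$ together with the explicit action of the nilpotent operator $Y$ on the Hodge filtration, this positivity can be verified by a direct computation with $Q$. Star-shaped open subsets of $\C^N$ are contractible, so $A\cap D$ is simply connected. A variant of this argument uses the diffeomorphism $\pi_+$ of \eqref{pi+}: since $\a\subseteq\g^{-1,1}\subseteq\mkp$ gives $\a\subseteq\mkp_+$ and hence $A\cap D\subseteq\exp(\mkp_+)\cap D$, the segment argument transfers via $\pi_+$ to an open subset of the Euclidean space $\exp(\mkp_0)\simeq\mkp_0$.

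The second approach uses the affine structures on $\T^H$ and $A\cap D\subseteq A\simeq\C^N$ more substantially. By Theorem \ref{affine-TmH}, $\Psi^H$ is an affine covering, so $\pi_1(A\cap D)$ acts freely on $\T^H$ by affine automorphisms preserving the pulled-back affine structure. Combined with the boundedness $\Psi^H(\T^H)=A\cap D\subseteq A\simeq\C^N$ from Corollary \ref{extended-boundedness}, the interaction of freeness of the deck action with the rigidity of affine automorphisms preserving a bounded set provides enough constraint to force the deck group to be trivial, yielding injectivity of $\Psi^H$ directly.

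The principal obstacle, shared by both approaches, is the convexity-type assertion that $\exp(tY)\bar o$ remains in $D$ for $t\in[0,1]$ whenever its endpoint does, i.e.\ the star-shapedness of $A\cap D$ along the one-parameter subgroups of $A$. Its proof requires propagating the positivity \eqref{HR2'} of the Hodge--Riemann relations along the one-parameter family of filtrations generated by $Y\in\a\subseteq\g^{-1,1}$, and is where the full horizontality content of strong local Torelli is deployed. Once this convexity statement is in hand, $A\cap D$ is simply connected and Theorem \ref{iso-TmH} follows.
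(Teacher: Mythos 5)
Your overall skeleton coincides with the paper's first proof: reduce injectivity of the covering $\Psi^H$ (Corollary \ref{cover-TmH}) to the simple connectivity of $A\cap D$, and observe that star-shapedness of $A\cap D$ with respect to the base point would suffice. However, the proposal never proves the one statement on which everything hinges, namely that $\exp(tY)\bar o\in D$ for all $t\in[0,1]$ whenever $\exp(Y)\bar o\in D$, i.e.\ the propagation of the positivity \eqref{HR2'} along the segment. You assert that this ``can be verified by a direct computation with $Q$'' and then, at the end, concede it is the ``principal obstacle'' shared by both of your approaches; as written this is a placeholder, not an argument, and it is not a routine verification. It is exactly the Harish-Chandra--type statement that the paper does \emph{not} reprove but imports from Lemma 3.1 and Corollary 3.2 of \cite{LS}: the projection $\pi_+$ of \eqref{pi+} restricts to a diffeomorphism from $A\cap D$ onto $\exp(\a_0)$, $\a_0=\a+\tau_0(\a)\subset\mkp_0$, whose source of nontriviality is the relation $\exp(tX)\bar o=\exp(T(t)Y)\bar o$ with $X=T_0(Y+\tau_0(Y))$; simple connectivity of $A\cap D$ then follows because $\exp(\a_0)$ is a Euclidean cell. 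Your ``variant using $\pi_+$'' still routes the argument through the unproved segment claim, whereas the paper's point is that the restricted diffeomorphism onto $\exp(\a_0)$ already gives simple connectivity with no convexity computation left to do. So the gap is genuine: the heart of the theorem is missing.

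The second approach is also not viable as stated. ``Freeness of the deck action by affine automorphisms preserving a bounded set forces the deck group to be trivial'' is not a known rigidity principle and is at least unclear: an affine automorphism whose fixed point lies outside the domain can preserve a bounded domain and act without fixed points, so boundedness plus affineness of the deck action does not by itself yield triviality. The paper's actual second proof uses more: completeness of the Hodge metric on $\T^H$ and Hopf--Rinow to produce geodesics to the base point, the fact that $\Psi^H$ is simultaneously a local isometry and an affine map, and again the Harish-Chandra relation to identify geodesics through the base point of $A\cap D$ with affine straight lines (Lemma \ref{straightline}); injectivity then follows because two distinct affine segments in $A\simeq\C^N$ cannot share two endpoints, and a nonconstant segment cannot close up. If you want to complete your proposal, you must either carry out the positivity propagation honestly (in effect redoing the Harish-Chandra argument of \cite{LS} for $\a\subset\g^{-1,1}$) or replace it by the cited diffeomorphism $\pi_+$ restricted to $A\cap D$.
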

\begin{proof}
Consider the diffeomorphism $\pi_+:\, \text{exp}(\mathfrak{p}_+) \cap D\to G_{\mathbb R}/K$ discussed above.
By Griffiths transversality, one has
$$\mathfrak{a}\subset\mathfrak{g}^{-1,1}\subset \mathfrak{p}_{+}\text{ and }\mathfrak{a}_0=\mathfrak{a} +\tau_0(\mathfrak{a})\subset \mathfrak{p}_0.$$
Then $A\cap D$ is a submanifold of $\text{exp}(\mathfrak{p}_+) \cap D$, and the diffeomorphism $\pi_{+}$ maps $A\cap D\subseteq \text{exp}(\mathfrak{p}_{+}) \cap D$ diffeomorphically to its image $\exp(\mathfrak{a}_0)$ inside $G_{\mathbb R}/K$, from which one has the diffeomorphism $$A\cap D\simeq \text{exp} (\mathfrak{a}_0)$$ induced by $\pi_+$. Since $\text{exp} (\mathfrak{a}_0)$ is simply connected, one concludes that $A \cap D$ is also simply connected.

Since $\T^H$ is simply connected and $\Psi^H:\, \T^H\to A\cap D$ is a covering map, we conclude that $\Psi^H$ must be a biholomorphic map.
\end{proof}

For the second proof, we will first prove the following elementary lemma, in which we mainly use the completeness of the Hodge metric on $\T^H$, the holomorphic affine structure on $\mathcal{T}^H$, the affineness of $\Psi^H$, and the properties of Hodge metric. We remark that as $\mathcal{T}^H$ is a complex affine manifold, we have the notion of straight lines in it with respect to the affine structure.

\begin{lemma}\label{straightline} For any point in $\mathcal{T}^H,$ there is a straight line segment in $\mathcal{T}^H$ connecting it to the base point $p$.
\end{lemma}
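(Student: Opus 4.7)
My plan is to show that the set $S \subseteq \mathcal{T}^H$ of points connectable to $p$ by a straight line segment (in the affine structure provided by Theorem \ref{affine-TmH}) is nonempty, open, and closed; since $\mathcal{T}^H$ is connected, this will force $S = \mathcal{T}^H$. The base point $p$ lies in $S$ trivially. For openness, if $q = \gamma_v(1) \in S$ with $\gamma_v$ the straight line from $p$ having initial tangent vector $v$, then because $\Psi^H$ is a local affine isomorphism near $p$ and near $q$, small perturbations of $v$ yield straight lines whose endpoints sweep out an open neighborhood of $q$.

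The main step is closedness. Suppose $q_n \in S$ with $q_n \to q$ in the Hodge metric, and let $\gamma_n$ be the straight line from $p$ to $q_n$. In affine coordinates, $\Psi^H(\gamma_n(t)) = o + t w_n$ with $w_n := \Psi^H(q_n) - o \in \mathfrak{a}$ converging to $w := \Psi^H(q) - o$, so the candidate straight line from $p$ to $q$ is the $\Psi^H$-lift, starting at $p$, of the straight segment $\ell(t) := o + tw$. I would first establish that $\ell([0,1]) \subseteq A \cap D$, so that this lift exists. The key ingredient is that, by Lemma \ref{Hodge extension} and strong local Torelli, $\Psi^H$ is a local isometry of Hodge metrics on $\mathcal{T}^H$ and on $A \cap D$; combined with Hodge completeness of $\mathcal{T}^H$, this forces $A \cap D$ to be Hodge complete as well, so any sequence in $A \cap D$ approaching $\partial(A \cap D) \subset A$ must have Hodge distance from $o$ tending to infinity. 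If $\ell$ first exited $A \cap D$ at some $t_0 \le 1$, the sequence $\ell_n(t_0) = o + t_0 w_n \in A \cap D$ would satisfy $\ell_n(t_0) \to \ell(t_0) \in \partial(A \cap D)$ in the Euclidean topology on $A$, forcing $d_h(o, \ell_n(t_0)) \to \infty$, and hence $d_h(p, \gamma_n(t_0)) \to \infty$ since the covering map $\Psi^H$ does not decrease Hodge distance between corresponding lifts.

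Once $\ell \subseteq A \cap D$ is established, the lift $\tilde{\ell}: [0,1] \to \mathcal{T}^H$ of $\ell$ starting at $p$ exists as a straight line in $\mathcal{T}^H$; the uniform convergence $\ell_n \to \ell$ in $A \cap D$ yields $\gamma_n \to \tilde{\ell}$ uniformly by continuous dependence of covering lifts, so $\tilde{\ell}(1) = \lim_n q_n = q$ and thus $q \in S$, completing the closedness step.

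The principal obstacle is making rigorous the contradiction when $\ell$ exits $A \cap D$: one must rule out the scenario in which intermediate points $\gamma_n(t_0)$ escape to Hodge-infinity while the endpoints $q_n$ remain bounded. Since straight lines in an affine manifold are generally not Hodge geodesics, the Hodge length of $\gamma_n$ is not a priori controlled by $d_h(p, q_n)$, and a straight line may in principle develop very large Hodge length between nearby endpoints. The resolution must carefully combine Hodge completeness of both $\mathcal{T}^H$ and $A \cap D$, the boundedness of $\Phi^H(\mathcal{T}^H)$ in $N_+$ from Theorem \ref{boundedness} and Corollary \ref{extended-boundedness}, and the global affineness of $\Psi^H$, in order to force the $\gamma_n$-trajectories uniformly away from $\partial(A \cap D)$.
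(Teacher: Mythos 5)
Your proposal has a genuine gap, and it sits exactly where you place it yourself: the closedness step of the open--closed argument is never carried out. To close $S$ you must show that the Euclidean segment $\ell(t)=o+tw$ from $o$ to $\Psi^H(q)$ stays inside $A\cap D$, i.e.\ a star-shapedness statement for $A\cap D$ with respect to the base point. This is essentially the whole content of the lemma: a bounded pseudoconvex domain need not be star-shaped, so no general argument will give it. The ingredients you propose do not force it. Completeness of the Hodge metric on $\mathcal{T}^H$ and on $A\cap D$ does show that points converging (Euclideanly) to $\partial(A\cap D)$ have Hodge distance to $o$ tending to infinity, and the covering map is distance non-increasing; but the contradiction you want requires the intermediate points $\gamma_n(t_0)$ of the straight segments to stay at bounded Hodge distance from $p$, and, as you note, straight lines of the affine structure are not Hodge geodesics, so $d_h(p,q_n)$ bounded gives no control on $d_h(p,\gamma_n(t_0))$. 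Euclidean boundedness of the image in $N_+$ (Theorem \ref{boundedness}, Corollary \ref{extended-boundedness}) is also of no help, since the danger is precisely approach to the boundary of $A\cap D$ inside a bounded region. So the argument stops at the decisive point.

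The paper's proof supplies exactly the missing structural input and avoids the open--closed scheme altogether. Since $\mathcal{T}^H$ is complete for the Hodge metric, Hopf--Rinow gives a Hodge geodesic $\gamma$ from $p$ to $q$; the local isometry $\Psi^H$ sends it to a Hodge geodesic of $A\cap D$ starting at $\tilde p=\Psi^H(p)$. The key fact, coming from the diffeomorphism $\pi_+:\exp(\mathfrak{p}_+)\cap D\to G_{\mathbb R}/K$ of \eqref{pi+} restricted to $A\cap D\simeq\exp(\mathfrak{a}_0)$ and the Harish-Chandra computation in Lemma 3.1 of \cite{LS}, is that every Hodge geodesic from $\tilde p$ in $A\cap D$ has the form $\exp(tX)\tilde p=\exp(T(t)Y)\tilde p$ with $X\in\mathfrak{a}_0$, $Y\in\mathfrak{a}$ and $T(t)$ monotone; since the affine structure on $A\cap D$ is induced from $\mathfrak{a}$ via $\exp$, such a geodesic is a reparametrized straight line, and because $\Psi^H$ is affine, $\gamma$ itself is a straight line segment in $\mathcal{T}^H$. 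In other words, the symmetric-space (Lie-theoretic) description of geodesics through the base point is what makes ``geodesic $=$ straight line from $p$'' true, and it is the ingredient your proposal would have to import to make the closedness step work; without it, the argument is not a proof.
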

\begin{proof} The proof uses crucially the facts that the map $\Psi^H:\, \T^H\to A\cap D$ is an local isometry with the Hodge metrics on $\T^H$ and $A\cap D$, and that it is also
an affine map with the induced affine structure on $\T^H$.

Let $q$ be any point in $\T^H$. As $\T^H$ is connected and complete with the Hodge metric, by Hopf-Rinow theorem, there exists a geodesic $\gamma$ in $\T^H$ connecting the base point $p$ to $q$. Since
$\Psi^{H}$ is a local isometry with the Hodge metric, $\tilde{\gamma}=\Psi^{H}(\gamma)$ is also a geodesic in $A\cap D$.

By Griffiths transversality, we have that $\mathfrak{a}\subset\mathfrak{g}^{-1,1}\subset \mathfrak{p}_{+}$ and $\mathfrak{a}_0=\mathfrak{a} +\tau_0(\mathfrak{a})\subset \mathfrak{p}_0$.
Restricting the diffeomorphism \eqref{pi+} to $A\cap D$, we know that any geodesic starting from the base point $\tilde{p}=\Psi^{H}(p)$
in $A\cap D$ is of the form $\exp(tX)\tilde{p}$ with $X\in \mathfrak{a}_{0}$ and $t\in \mathbb{R}$.

Recall that, from the computation in the proof of Lemma 3.1 in \cite{LS} which is due to Harish-Chandra in proving  his famous embedding theorem of Hermitian symmetric spaces
as bounded domains in complex Euclidean spaces, we have the relation
$$\exp(tX)\tilde{p}=\exp(T(t)Y)\tilde{p}$$ with $Y\in \mathfrak{a}$ satisfying $X=T_0(Y+\tau_{0}(Y))$ for some  $T_0\in \mathbb{R}$, and $T(t)$ a smooth real valued monotone function of $t$.

 Note that the affine structure on $A\cap D$ is induced from the affine structure on $\mathfrak{a}$ by the exponential map $$\text{exp}:\, \mathfrak{a}\to A,$$
therefore the geodesic $$\tilde{\gamma}=\exp(T(t)Y)\tilde{p}$$ corresponds to a straight line with respect to the affine structure on $A\cap D$. Hence $\gamma$ is also a
straight line in $\T^H$ with respect to the induced affine structure, since $\Psi^H$ is an affine map.
\end{proof}


\begin{proof}[Second Proof of Theorem \ref{iso-TmH}]
Let $q_{1}, q_{2}\in \mathcal{T}^H$ be any two different points. Suppose that to the contrary, one has $\Psi^{H}(q_{1})=\Psi^{H}(q_{2})$, we will
derive a contradiction.

First, suppose that one of $q_{1}, q_{2}$, say $q_{1}$, is the base point $p$, then Lemma \ref{straightline} implies that there is a straight line segment
$l\subseteq \mathcal{T}^H$ connecting $p$ and $q_{2}$. Since $\Psi^{H}$ is an affine map and is locally biholomorphic by local Torelli theorem, the straight line segment $l$ is mapped to a straight line segment $\tilde{l}=\Psi^{H}(l)$ in $$A\cap D \subset A\simeq \C^{N}.$$ But
$\Psi^{H}(p)=\Psi^{H}(q_{2})$, which implies that the straight line segment $\tilde{l}$ is also a cycle in $A\simeq \C^{N}$, which is a contradiction.

Now suppose that both $q_{1}$ and $q_{2}$ are different from the base point $p$, then by Lemma \ref{straightline}, there exist two different straight line segments
$l_{1},l_{2}\subseteq \mathcal{T}^H$ connecting $p$ to $q_{1}$ and $q_{2}$ respectively. Since $$\Psi^{H}(q_{1})=\Psi^{H}(q_{2}),$$ the two straight
line segments $\tilde{l_{i}}=\Psi^{H}(l_{i})$, $i=1,2$ in $A\cap D$ intersect at two different points, $\Psi^H(p)$ and $\Psi^H(q_1)=\Psi^H(q_2)$,
therefore must coincide. This contradicts to the fact that $\Psi^{H}$ is locally biholomorphic at the base point $p$, and the assumption that the two straight
line segments $l_1$ and $l_2$ are different.
\end{proof}

Since $\Psi^H=P\circ \Phi^H$ and the corresponding tangent map
$\text{d}\Psi^H=\text{d}P\circ \text{d}\Phi^H$, we have the
following corollary from Theorem \ref{iso-TmH}.
\begin{corollary}\label{injectivity of P^H}
Let the conditions be as Theorem \ref{iso-TmH}.
Then the extended period map $\Phi^H:\,\T^{H}\to N_+\cap D$ is an embedding.
\end{corollary}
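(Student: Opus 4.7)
The plan is to deduce the corollary directly from Theorem \ref{iso-TmH}, which asserts that $\Psi^{H}:\,\T^{H}\to A\cap D$ is a biholomorphism. Since $\Psi^{H}=P\circ\Phi^{H}$, both the injectivity of $\Phi^{H}$ and the injectivity of its differential will follow as formal consequences of the factorization, and the argument for being an embedding amounts to constructing a continuous inverse on the image.

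First I would prove injectivity. Take $q_{1},q_{2}\in\T^{H}$ with $\Phi^{H}(q_{1})=\Phi^{H}(q_{2})$. Applying $P$ yields
\[
\Psi^{H}(q_{1})=P(\Phi^{H}(q_{1}))=P(\Phi^{H}(q_{2}))=\Psi^{H}(q_{2}),
\]
so by Theorem \ref{iso-TmH} we have $q_{1}=q_{2}$. Next I would verify the immersion property. From the chain rule,
\[
d\Psi^{H}_{q}=dP_{\Phi^{H}(q)}\circ d\Phi^{H}_{q}
\]
for every $q\in\T^{H}$. Since the left-hand side is an isomorphism by Theorem \ref{iso-TmH}, the factor $d\Phi^{H}_{q}$ must be injective. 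Hence $\Phi^{H}$ is an injective holomorphic immersion.

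To upgrade to an embedding I would show that $(\Phi^{H})^{-1}:\,\Phi^{H}(\T^{H})\to\T^{H}$ is continuous. Because $\Phi^{H}$ is injective, the restriction $P|_{\Phi^{H}(\T^{H})}$ is a bijection onto $A\cap D$, and the identity $\Psi^{H}=P\circ\Phi^{H}$ can be inverted to give
\[
(\Phi^{H})^{-1}=(\Psi^{H})^{-1}\circ P\big|_{\Phi^{H}(\T^{H})}.
\]
The right-hand side is the composition of the continuous (in fact holomorphic) projection $P$ with the biholomorphism $(\Psi^{H})^{-1}$, so $(\Phi^{H})^{-1}$ is continuous on $\Phi^{H}(\T^{H})$ equipped with the subspace topology. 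Combined with the injective immersion property, this gives that $\Phi^{H}$ is a holomorphic embedding of $\T^{H}$ into $N_{+}\cap D$.

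No substantive obstacle is expected: the statement is purely a formal consequence of the factorization $\Psi^{H}=P\circ\Phi^{H}$ together with the biholomorphism established in Theorem \ref{iso-TmH}. The only minor subtlety is recording that the projection $P$ is defined and continuous on all of $N_{+}\cap D$ (not merely on $\Phi^{H}(\T^{H})$), which is built into its very definition $P=\exp\circ\, p\circ\exp^{-1}$ from Section \ref{bounded periods}.
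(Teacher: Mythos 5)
Your proposal is correct and follows essentially the same route as the paper, which deduces the corollary in one line from the factorization $\Psi^H=P\circ\Phi^H$ and $d\Psi^H=dP\circ d\Phi^H$ together with Theorem \ref{iso-TmH}. Your extra step verifying that $(\Phi^{H})^{-1}=(\Psi^{H})^{-1}\circ P|_{\Phi^{H}(\T^{H})}$ is continuous simply makes explicit the topological part of "embedding" that the paper leaves implicit.
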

Now we go on to the Torelli space $\T'$. Let $\pi'_{m}:\, \T'\to \Z$ be the covering map for $m\ge 3$. Then there exists the natural covering map $\pi:\, \T \to \T'$ such that the following diagram commutes
$$\xymatrix{
\T \ar[dr]^-{\pi} \ar[dd]^-{\pi_m} &\\
&\T'\ar[dl]^-{\pi_m'},\\
\Z &}$$
which together with diagram \eqref{main digram} gives the following commutative diagram:
\begin{equation}\label{main digram2}
\xymatrix{
\T \ar[dr]^-{\pi}\ar[rr]^-{i_{\T}} \ar[dd]^-{\pi_m} &&\T^{H} \ar[dd]^-{\pi_{m}^{H}}\ar[rr]^-{\Phi^{H}}  &&D\ar[dd]^-{\pi_{D}}\\
&\T'\ar[dl]^-{\pi'_{m}}\ar[urrr]^-{\Phi'}&&&\\
\Z \ar[rr]^-{i} &&\ZZ \ar[rr]^-{\Phi_{\ZZ}} &&D/\Gamma .}
\end{equation}

Let $\T_0= i_\T(\T)$ denote the image of $i_\T$. Next we will define a map $$\pi_{0}:\, \T_{0}\to \T'$$ such that the following commutative diagram holds,
\begin{equation}\label{main diagram2}
\xymatrix{
\T \ar[dr]^-{\pi}\ar[rr]^-{i_{\T}} \ar[dd]^-{\pi_m} &&\T_{0} \ar[dl]_-{\pi_{0}} \ar[dd]^-{\pi_{m}^{H}|_{\T_{0}}}\ar[rr]^-{\Phi^{H}|_{\T_{0}}}  &&D\ar[dd]^-{\pi_{D}}\\
&\T'\ar[dl]^-{\pi'_{m}}\ar[urrr]^-{\Phi'}&&&\\
\Z \ar[rr]^-{i} &&\ZZ \ar[rr]^-{\Phi_{\ZZ}} &&D/\Gamma .}
\end{equation}

The proof of the following proposition is given by explicitly constructing $\pi_0$. Again the level structures plays a crucial role in the argument.

\begin{proposition}\label{im-injective}
Let $\T_0= i_\T(\T)$ be defined by the image of $i_\T$. Then
there is a biholomorphic  map $\pi_0: \, \T_0\to \T'$ such that diagram (\ref{main diagram2}) is commutative.
\end{proposition}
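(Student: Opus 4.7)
The plan is to construct $\pi_0:\, \T_0\to \T'$ by transporting markings along paths in $\T_0$, and then to verify well-definedness and bijectivity by showing that $\T_0$ and $\T'$ correspond to the same subgroup of $\pi_1(\Z)$ as coverings of $\Z$. The crucial ingredient will be the injectivity of the extended period map $\Phi^H$ established in Corollary~\ref{injectivity of P^H}.

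First, I fix base points $p_0\in \T$, $q_0=i_\T(p_0)\in \T_0$ and $t_0=\pi(p_0)\in \T'$ all lying over a common $s_0\in \Z$, and let $\gamma_0$ be the marking of $X_{s_0}$ representing $t_0$. For any $q\in \T_0$, I choose a path in $\T_0$ from $q_0$ to $q$; projecting to $\Z$ and parallel-transporting the marking along this path produces a marked polarized manifold $(X_s, L_s, \gamma_q)$ with $s=\pi_m^H(q)$, and I set $\pi_0(q)=[X_s, L_s, \gamma_q]\in\T'$. By construction $\pi_m'\circ \pi_0=\pi_m^H|_{\T_0}$ and $\pi_0\circ i_\T=\pi$; the last commutativity $\Phi'\circ \pi_0=\Phi^H|_{\T_0}$ of diagram~\eqref{main diagram2} then follows by precomposing both sides with the surjection $i_\T$ and using $\Phi^H\circ i_\T=\Phi=\Phi'\circ \pi$.

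The central step is to check that $\pi_0$ is well-defined and bijective, which reduces to a comparison of subgroups of $\pi_1(\Z)$. Since $\T^H$ is simply connected and $\T_0=(\pi_m^H)^{-1}(\Z)$, covering space theory identifies $\pi_1(\T_0)$ with $\ker(i_*\!:\pi_1(\Z)\to \pi_1(\ZZ))$, where $i_*$ is surjective because $\ZZ\setminus \Z$ is an analytic subvariety of $\ZZ$. By the very definition of $\T'$ via exact (not just level $m$) markings, $\pi_1(\T')$ is the subgroup $\ker\rho$ of $\pi_1(\Z)$. Because the Picard-Lefschetz transformations around $\ZZ\setminus \Z$ are trivial (Proposition~\ref{openness1}), $\rho$ factors as $\pi_1(\Z)\twoheadrightarrow \pi_1(\ZZ)\xrightarrow{\bar\rho}\Gamma$, and the inclusion $\ker i_*\subseteq \ker\rho$ is automatic. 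For the reverse inclusion I will argue that $\bar\rho$ is injective: if $g\in \pi_1(\ZZ)$ satisfies $\rho(g)=I$, then the corresponding deck transformation of $\T^H\to \ZZ$ satisfies
\[
\Phi^H(g\cdot q)=\rho(g)\cdot \Phi^H(q)=\Phi^H(q)\quad\text{for every } q\in \T^H,
\]
so the injectivity of $\Phi^H$ from Corollary~\ref{injectivity of P^H} forces $g=1$. Hence $\pi_1(\T_0)=\pi_1(\T')$ as subgroups of $\pi_1(\Z)$, so $\pi_0$ is well-defined, injective, and surjective.

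Once this subgroup equality is in place, $\pi_0$ is a bijective isomorphism of coverings of $\Z$, and is therefore automatically biholomorphic because both $\pi_m^H|_{\T_0}$ and $\pi_m'$ are local biholomorphisms of complex manifolds, so locally $\pi_0=(\pi_m')^{-1}\circ \pi_m^H|_{\T_0}$. The main obstacle is precisely the injectivity of $\bar\rho$ on $\pi_1(\ZZ)$: without it, $\T_0$ would only be a nontrivial covering of $\T'$, which would prevent the biholomorphism and block the global Torelli conclusion. This injectivity in turn rests on Corollary~\ref{injectivity of P^H}, itself a consequence of the affine structure on $\T^H$ produced from strong local Torelli in Theorem~\ref{affine-TmH} and Theorem~\ref{iso-TmH}, which is the decisive point at which the strong local Torelli hypothesis enters the argument.
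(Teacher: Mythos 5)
Your covering-space reformulation is partly sound: the identification $\pi_1(\T_0)=\ker\bigl(i_*:\pi_1(\Z)\to\pi_1(\ZZ)\bigr)$, and the injectivity of the induced map $\bar\rho:\pi_1(\ZZ)\to\Gamma$ via the equivariance of $\Phi^H$ and Corollary \ref{injectivity of P^H}, are both legitimate (the latter is essentially the argument the paper itself uses later in the proof of Theorem \ref{generic}). The genuine gap is the assertion that $\pi_1(\T')$ equals $\ker\rho$ ``by the very definition of $\T'$ via exact markings.'' Only the inclusion $\ker\rho\subseteq\pi_1(\T')$ is immediate. The reverse inclusion is not definitional: a point of $\T'$ is an isomorphism class $[X,L,\gamma]$, and by \eqref{marking identify} two markings that differ by $f^*$ for an automorphism $f$ of the polarized fiber give the \emph{same} point of $\T'$. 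Consequently a loop in $\Z$ lifts to a loop in $\T'$ as soon as its monodromy is induced by such an automorphism, not only when its monodromy is trivial; so $\pi_1(\T')\subseteq\ker\rho$ amounts to excluding nontrivial cohomology actions of fiber automorphisms realized by monodromy, which is essentially the content of the proposition rather than a consequence of the definition. This is precisely where the paper does real work: it proves the corresponding statement (well-definedness of $\pi_0$) by playing the level $m$ structures for arbitrarily large $m$ against Corollary \ref{TmH} to force the comparison matrix $A$ to be the identity, and it gets injectivity of $\pi_0$ not from a subgroup identity but from the relation $\Phi'\circ\pi_0=\Phi^H|_{\T_0}$ together with the injectivity of $\Phi^H$.

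Note that you already have the ingredients to close the gap the way the paper does: you establish the commutativity $\Phi'\circ\pi_0=\Phi^H|_{\T_0}$ and you invoke Corollary \ref{injectivity of P^H}, so injectivity of $\pi_0$ follows directly from these two facts without ever claiming $\pi_1(\T')\subseteq\ker\rho$; well-definedness needs only $\pi_1(\T_0)\subseteq\pi_1(\T')$, which your chain $\ker i_*\subseteq\ker\rho\subseteq\pi_1(\T')$ does give (and which is an attractive alternative to the paper's level-structure computation for that direction). As written, however, routing both well-definedness and injectivity through the unproved identity $\pi_1(\T')=\ker\rho$ leaves the essential difficulty unaddressed. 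A smaller gloss: the factorization of $\rho$ through $\pi_1(\ZZ)$ should be justified either by the standard fact that $\ker i_*$ is normally generated by meridians of the codimension-one part of $\ZZ\setminus\Z$ (so that triviality of the Picard--Lefschetz transformations applies), or more directly by the compatibility $\rho=(\Phi_{\ZZ})_*\circ i_*$ coming from the continuous extension $\Phi_{\ZZ}$ and the covering $\pi_D:D\to D/\Gamma$ with $\Gamma$ torsion-free.
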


\begin{proof}  We start from the definition of $\pi_0$. For any point $o$ in $\T_{0}$, we can choose $p\in \T$ such that $i_{\T}(p)=o$. We define $$\pi_0:\, \T_{0}\to \T'$$ such that $\pi_0(o) = \pi (p)$. Then clearly $\pi_0$ fits in the above commutative diagram.

We first show that the map $\pi_0:\, \T_0\to \T'$ defined this way is well-defined and satisfies $$\Phi^{H}|_{\T_{0}}=\Phi'\circ \pi_{0}.$$

Let $p\neq q\in \T$ be two points such that
$i_{\T}(p)=i_{\T}(q)\in \T_{0}$.
We choose some $m\ge 3$ such that $\T^{H}\simeq \TT$. Then $$i_m(p)=i_m(q)\in \T_{m} \subset \TT.$$

Let $[X_p, L_p, \gamma_p]$ and $[X_q, L_q, \gamma_q]$ denote the fibers over the points $p$ and $q$ of the analytic family $\U'\to \T'$ respectively, where $\gamma_p$ and $\gamma_q$ are two markings identifying the fixed lattice $\Lambda$ isometrically with $H^{n}(X_{p},\mathbb{Z})/\text{Tor}$ and $H^{n}(X_{q},\mathbb{Z})/\text{Tor}$ respectively.

Since $i_m(p)=i_m(q)$ and $i\circ \pi_m=\pi^H_{m}\circ i_m$, we have $$i\circ\pi_m(p)=i\circ\pi_m(q)\in \Z,$$ i.e. $$\pi_m(p)=\pi_m(q)\in \Z.$$ By the definition of $\Z$, there
exists a biholomorphic map $f:\, X_p\to X_q$ such that $f^*L_q=L_p$ and
$$f^*\gamma_q= \gamma_p\cdot A,$$
where $A\in \text{Aut}(H^{n}(X_{p},\mathbb{Z})/\text{Tor},Q))$ satisfies
$$A=(A_{ij})\equiv\text{Id}\quad(\text{mod } m), \text{ for }m\ge 3.$$

Let $m_0$ be an integer such that $m_0> |A_{ij}|$ for any $i,j$. Corollary \ref{TmH} implies that $$\T^{H}\simeq \TT\simeq \T_{m_0}^{H}.$$  The same argument as above implies
that $$\pi_{m_0}(p)=\pi_{m_0}(q)\in \mathcal{Z}_{m_{0}}$$ and hence
$$A=(A_{ij})\equiv\text{Id}\quad(\text{mod } m_0).$$
Since each $m_0> |A_{ij}|$, we have $A=\text{Id}$.

Therefore, we have found a biholomorphic map $$f:\, X_p\to X_q$$ such that $f^*L_q=L_p$ and $f^*\gamma_q= \gamma_p$. This implies that $p$ and $q$ in $\T$ actually correspond to the same point in the Torelli space $\T'$, i.e.
$$\pi(p) =\pi(q) \text{ in } \T'.$$
So we have proved that the map $\pi_0:\, \T_0\to \T'$ is well-defined.

From the definitions of the period map and that of the Torelli space, we deduce that the map $\pi_0$ satisfies the identity $$\Phi^{H}|_{\T_{0}}=\Phi'\circ \pi_{0}.$$ Clearly
$\pi_0$ is surjective, because $\pi:\, \T \to \T'$ is surjective. Since $\Phi^{H}:\, \T^{H}\to D$ is injective, so is the restriction map $$\Phi^{H}|_{\T_{0}}:\, \T_{0}\to D,$$
which implies the injectivity of the map $\pi_{0}:\, \T_{0}\to \T'$. Hence $\pi_{0}:\, \T_{0}\to \T'$ is in fact a biholomorphic map. This completes the proof of the
proposition.
\end{proof}

Define the injective map $$\pi^{0}:\, \T' \to \T^{H}, \  \pi^{0}=(\pi_{0})^{-1},$$ we then have the relation that $$\Phi'=\Phi^{H}\circ \pi^{0}:\, \T' \to D.$$ From the
injectivity of $\Phi^{H}$ and $\pi^{0}$, we deduce the global Torelli theorem on the Torelli space as follows.
\begin{thm}[Global Torelli theorem]\label{Global Torelli theorem}
Suppose that the polarized manifold $(X,L)$ belongs to the T-class, and the strong local Torelli holds for $(X,L)$, then the global Torelli theorem holds on the Torelli space $\T'$, i.e., the period map $\Phi':\, \T'\to D$ is injective.
\end{thm}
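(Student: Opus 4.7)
The plan is to assemble the theorem as a one-line composition of two injective maps that have already been established in the preceding subsection. Concretely, I would invoke Proposition~\ref{im-injective} to produce the biholomorphism $\pi_0:\T_0\to\T'$ and then define $\pi^0:\T'\to\T^H$ as the set-theoretic inverse $\pi^0=\pi_0^{-1}$, so that $\pi^0$ is an injective holomorphic map onto the open subset $\T_0\subset\T^H$. Next I would read off the commutativity identity $\Phi'=\Phi^H|_{\T_0}\circ\pi_0^{-1}=\Phi^H\circ\pi^0$ from the large commutative diagram~\eqref{main diagram2} of Proposition~\ref{im-injective}, which was precisely the purpose of constructing the map $\pi_0$ in that proposition.

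Having done this setup, the proof reduces to observing that both factors in this composition are injective. First, $\pi^0$ is injective because it is a bijection onto $\T_0$ by construction. Second, by Corollary~\ref{injectivity of P^H}, the extended period map $\Phi^H:\T^H\to N_+\cap D\subset D$ is an embedding, and so in particular is injective on all of $\T^H$. The composition of two injections being an injection, it follows immediately that $\Phi':\T'\to D$ is injective, which is exactly the conclusion of the Global Torelli theorem.

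In this sense, the proof of the final statement is essentially formal at this stage. All the real work has been done earlier: the existence of the affine map $\Psi^H:\T^H\to A\cap D$ and the identification $\mathrm{T}^{1,0}\T^H\simeq(\Psi^H)^\ast \mathcal H_A$ (Theorem~\ref{affine-TmH} and Lemma~\ref{Hodge extension}, which depend crucially on the strong local Torelli hypothesis extending across $\T^H\setminus\T_m$); the covering-map conclusion via Griffiths--Wolf (Corollary~\ref{cover-TmH}); the upgrade from covering to biholomorphism using either the diffeomorphism $\pi_+:\exp(\mathfrak p_+)\cap D\to G_{\mathbb R}/K$ or the straight-line argument of Lemma~\ref{straightline} (Theorem~\ref{iso-TmH}); and finally the delicate argument in Proposition~\ref{im-injective} that uses the freedom to choose arbitrarily large level structure $m_0>|A_{ij}|$ to force the intertwining automorphism $A$ of $H^n(X_p,\mathbb Z)/\text{Tor}$ to equal the identity.

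Thus the only thing I would actually write in the proof of Theorem~\ref{Global Torelli theorem} is a short paragraph stringing together Proposition~\ref{im-injective} and Corollary~\ref{injectivity of P^H}, noting that the main obstacle—controlling the monodromy along $\T^H\setminus\T_m$ and propagating the local Torelli isomorphism of tangent bundle with Hodge subbundle across the Hodge metric completion—has already been dispatched by the preceding results, and that the level-$m$ structure was the essential technical device that killed off the residual automorphism ambiguity between points of $\T$ lying over the same point of $\T'$.
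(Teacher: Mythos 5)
Your proposal is correct and coincides with the paper's own argument: the paper likewise defines $\pi^{0}=(\pi_{0})^{-1}$ via Proposition~\ref{im-injective}, records the identity $\Phi'=\Phi^{H}\circ\pi^{0}$, and concludes injectivity of $\Phi'$ from the injectivity of $\Phi^{H}$ (Corollary~\ref{injectivity of P^H}) together with that of $\pi^{0}$. You have also correctly located where the real work lies (the affine structure, the Griffiths--Wolf covering argument, Theorem~\ref{iso-TmH}, and the level-structure argument in Proposition~\ref{im-injective}), so nothing further is needed.
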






Let $$\Psi' = P\circ \Phi':\, \T'\to A\cap D$$ be the composite of $\Phi'$ with the projection map $P:\, N_+\cap D \longrightarrow A\cap D$, we then have,
\begin{corollary}
Under the same conditions in Theorem \ref{Global Torelli theorem}, the holomorphic map $\Psi':\, \T'\to A\cap D$ is injective.
\end{corollary}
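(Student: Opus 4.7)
The plan is to exploit the factorization $\Psi' = \Psi^H \circ \pi^0$ that is already implicit in the diagrams preceding the statement. Concretely, the injective map $\pi^0 = (\pi_0)^{-1}:\, \T' \to \T^H$ constructed just before Theorem \ref{Global Torelli theorem} satisfies $\Phi' = \Phi^H \circ \pi^0$. Composing with the projection $P:\, N_+\cap D \to A\cap D$ on both sides gives
\begin{equation*}
\Psi' = P \circ \Phi' = P \circ \Phi^H \circ \pi^0 = \Psi^H \circ \pi^0,
\end{equation*}
so $\Psi'$ is presented as a composition of two maps whose injectivity has already been established.

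First I would verify the factorization by unwinding the definitions: $\Psi^H = P \circ \Phi^H$ by the discussion just before Corollary \ref{injectivity of P^H}, and $\Phi^H \circ \pi^0 = \Phi'$ by the paragraph immediately preceding Theorem \ref{Global Torelli theorem}. Next I would invoke Theorem \ref{iso-TmH}, which tells us that $\Psi^H:\, \T^H \to A\cap D$ is in fact a biholomorphism, so in particular injective. Finally, $\pi^0$ is injective by construction (as the inverse of the biholomorphism $\pi_0:\, \T_0 \to \T'$ from Proposition \ref{im-injective}). The composition of two injective maps is injective, and we conclude that $\Psi':\, \T' \to A\cap D$ is injective.

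There is essentially no obstacle: all the substantial work has already been done in establishing the injectivity of $\Phi'$ (Theorem \ref{Global Torelli theorem}) together with the global affine structure result $\Psi^H \simeq A\cap D$ (Theorem \ref{iso-TmH}). The content of the corollary is simply to observe that the projection $P$ is compatible with the factorization through $\T^H$, so the affine version $\Psi'$ of the period map inherits injectivity from $\Psi^H$. As an alternative one could argue directly: if $\Psi'(p) = \Psi'(q)$ for $p, q \in \T'$, lift to $\pi^0(p), \pi^0(q) \in \T^H$, apply $\Psi^H = \Psi' \circ (\pi^0)^{-1}$ on the image to get $\Psi^H(\pi^0(p)) = \Psi^H(\pi^0(q))$, and then use injectivity of $\Psi^H$ and of $\pi^0$ to conclude $p = q$.
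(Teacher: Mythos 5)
Your proposal is correct and follows exactly the route the paper intends: the corollary is stated without separate proof precisely because $\Psi' = P\circ\Phi' = \Psi^H\circ\pi^0$, with $\Psi^H$ biholomorphic by Theorem \ref{iso-TmH} and $\pi^0=(\pi_0)^{-1}$ injective by Proposition \ref{im-injective}. You also rightly avoid the trap of deducing injectivity of $\Psi'$ from injectivity of $\Phi'$ alone (the projection $P$ is not injective), which is the only subtlety here.
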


Since the moduli spaces with level $m$ structure of polarized K3 surfaces, Calabi-Yau manifolds and hyperk\"ahler manifolds are smooth for $m$ large, and they have the strong local Torelli property as described in Section \ref{local Torelli},  we have the following corollary.

\begin{corollary}\label{case 1}
Let the polarized manifold $(X,L)$ be one of the following cases,
\begin{enumerate}
\item[(i)] K3 surface;
\item[(ii)] Calabi-Yau manifold;
\item[(iii)] hyperk\"ahler manifold.
\end{enumerate}
Then the global Torelli theorem holds on the Torelli space $\T'$ for $(X,L)$.

\end{corollary}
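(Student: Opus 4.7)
The plan is to apply Theorem \ref{Global Torelli theorem} directly. For each of the three classes of manifolds, I need to verify two hypotheses: (a) that $(X,L)$ belongs to the T-class in the sense of Definition \ref{T-class}, and (b) that strong local Torelli holds in the sense of Definition \ref{local-Torelli}. Once both are verified, the conclusion is immediate from Theorem \ref{Global Torelli theorem}.

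The verification of (b) was essentially carried out in Section \ref{local Torelli}. For Calabi--Yau manifolds, the contraction with a nowhere-vanishing holomorphic $n$-form gives an isomorphism $\Theta_M\simeq \Omega_M^{n-1}$ and hence an isomorphism $H^1(M,\Theta_M)\simeq H^{n-1,1}(M)$; combined with the Bogomolov--Tian--Todorov unobstructedness, this yields the strong local Torelli isomorphism with Hodge subbundle $\mathcal{H}=\mathrm{Hom}(\mathscr{F}^n,\mathscr{F}^{n-1}/\mathscr{F}^n)$. The K3 case is the two-dimensional analogue and is handled identically. For hyperk\"ahler manifolds, the nondegenerate holomorphic two-form $\sigma$ provides an isomorphism $\Theta_X\simeq \Omega_X$, and unobstructedness again gives $H^1(X,\Theta_X)\simeq \mathrm{Hom}(F^2, F^1/F^2)$, producing the Hodge subbundle $\mathcal{H}=\mathrm{Hom}(\mathscr{F}^2,\mathscr{F}^1/\mathscr{F}^2)$. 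In each case the Hodge subbundle $\mathcal{H}$ is globally defined on the period domain $D$ as required.

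For (a), the point is that in each of these cases there exists a smooth quasi-projective moduli space of polarized manifolds with level $m$ structure for $m\geq 3$ large enough, over which a universal analytic family exists. For polarized K3 surfaces this is classical, and for polarized hyperk\"ahler manifolds it follows from the construction of moduli of polarized hyperk\"ahler manifolds with level structure (where the level structure kills the finite automorphisms of $(X,L)$). For polarized Calabi--Yau manifolds, the existence of the smooth moduli space with level structure and a universal family was established by Viehweg via GIT, as used in \cite{CLS13}. In every case the irreducible component $\mathcal{Z}_m$ of $\mathscr{L}_m$ containing $(X,L)$ is a complex manifold supporting a universal family, so $(X,L)$ belongs to the T-class in the sense of Definition \ref{T-class}.

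With (a) and (b) verified in all three cases, Theorem \ref{Global Torelli theorem} applies and we conclude that the period map $\Phi':\mathcal{T}'\to D$ is injective, which is exactly the global Torelli theorem on the Torelli space $\mathcal{T}'$. The only real content beyond citing the main theorem is ensuring that the T-class hypothesis is satisfied, so the main obstacle would be locating or constructing the smooth moduli spaces with level structure and universal families in the three cases; once these are in hand, the corollary is a direct specialization of Theorem \ref{Global Torelli theorem}.
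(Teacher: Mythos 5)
Your proposal is correct and follows exactly the paper's route: the paper dispatches this corollary by noting that the moduli spaces with level $m$ structure of polarized K3 surfaces, Calabi--Yau manifolds and hyperk\"ahler manifolds are smooth for $m$ large (so $(X,L)$ is in the T-class) and that strong local Torelli holds as verified in the examples of Section \ref{local Torelli}, then invoking Theorem \ref{Global Torelli theorem}. Your write-up just makes these two verifications slightly more explicit, so it is essentially the same argument.
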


See introduction and references there for discussions about previous results related to Corollary \ref{case 1}.

By Theorem 1.2 in \cite{JL1} and the result of Benoist as stated in Proposition 2.2 of \cite{JL}, we know that the moduli spaces of smooth hypersurfaces have smooth covers given by the moduli spaces with level structures, and for the complete intersections used in the hyperplane arrangements, the corresponding moduli spaces also have smooth covers.
So when $m$ is large, the moduli space  $\Z$ containing $(X,L)$ is  smooth as required in the definition of T-class. As discussed in Section \ref{local Torelli}, some of these examples have strong local Torelli property, therefore we have the following corollary.

\begin{corollary}\label{case 3}

Let the polarized manifold $(X,L)$ be one of the following cases,
\begin{enumerate}
\item[(i)] smooth hypersurface of degree $d$ in $\mathbb{P}^{n+1}$ satisfying $d|(n+2)$ and $d\ge 3$;
\item[(ii)] the polarized manifold of $\mathbb{P}^{m-1}$ associated to an arrangement of $m$ hyperplanes in $\mathbb{P}^{n}$ with $m\ge n$, defined as in \eqref{variety of hyperplane} of Section \ref{eigenperiods};
\item[(iii)] smooth cubic surface or cubic threefold.
\end{enumerate}
Then global Torelli theorem holds on the corresponding Torelli spaces $\T'$.

\end{corollary}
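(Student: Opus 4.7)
The plan is to apply the main Global Torelli Theorem (Theorem \ref{Global Torelli theorem}) to each of the three cases. Since Theorem \ref{Global Torelli theorem} gives injectivity of $\Phi':\, \T'\to D$ as soon as we know that $(X,L)$ belongs to the T-class and satisfies strong local Torelli, the task reduces entirely to verifying those two hypotheses case by case. Neither verification requires new analysis: both have been set up in Section \ref{local Torelli} and in the paragraph immediately preceding the corollary.

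First I would check T-class membership. For cases (i) and (iii), both being smooth hypersurfaces (cubic surfaces and cubic threefolds are instances of (i) with $d=3$), I would invoke Theorem 1.2 of \cite{JL1} together with Benoist's result quoted in Proposition 2.2 of \cite{JL} to conclude that for $m$ large enough, the irreducible component $\Z$ of the moduli space with level $m$ structure containing $(X,L)$ is a complex manifold carrying an analytic family $f_m:\, \U_m\to \Z$, which is exactly Definition \ref{T-class}. For case (ii), the polarized manifold is the complete intersection in $\mathbb{P}^{m-1}$ defined by \eqref{variety of hyperplane}, which again falls under the smooth-cover existence statement cited in the paragraph preceding the corollary, so T-class again holds.

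Second I would invoke the explicit strong local Torelli verifications already done in Section \ref{local Torelli}. In case (i) with $d\mid (n+2)$ and $d\ge 3$, Example \ref{Hypersurfaces} exhibits the Hodge subbundle $\mathcal{H}=\mathrm{Hom}(\mathscr{F}^{k},\mathscr{F}^{k-1}/\mathscr{F}^{k})$ with $k$ determined by $d(n+1-k)=n+2$ and verifies the isomorphism \eqref{local assumption} via Macaulay's theorem and the Jacobian-ring computation $R(F)^{d}\simeq R(F)^{d(n+2-k)-n-2}$. In case (iii), Example \ref{cubic} uses the cyclic triple cover of $\mathbb{P}^{3}$ or $\mathbb{P}^{4}$ branched along the cubic to produce an eigenperiod map whose Hodge subbundle $\mathcal{H}=\mathrm{Hom}(\mathscr{F}^{k}_{\chi},\mathscr{F}^{k-1}_{\chi}/\mathscr{F}^{k}_{\chi})$ realizes \eqref{eigen local assumption}. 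In case (ii), Example \ref{arrangements} shows that for $|\mu|=n+1$, the eigenperiod map $\Phi_{\chi}:\, \tilde{\mathcal{P}}_{m,n}\to D_{\chi}$ is a local isomorphism by Theorem 8.3 of \cite{DK} together with \cite{Varchenko} and \cite{JL}, which yields the required Hodge subbundle.

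With both hypotheses satisfied in each case, Theorem \ref{Global Torelli theorem} immediately yields the injectivity of $\Phi':\, \T'\to D$ and completes the proof. There is no genuine obstacle here beyond bookkeeping: all the mathematical content is concentrated in the main theorem and in the explicit constructions of Section \ref{local Torelli}. The only point that deserves a brief remark is that in cases (ii) and (iii) the Hodge subbundle $\mathcal{H}$ arises from an eigenperiod map rather than from the full Hodge bundle, and one should emphasize, as noted in Section \ref{eigenperiods}, that the global Torelli conclusion we draw is still for the ordinary period map $\Phi'$ on the Torelli space $\T'$ associated to $(X,L)$, since the isomorphism \eqref{local assumption} is exactly what Theorem \ref{Global Torelli theorem} requires.
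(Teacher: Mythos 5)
Your proposal is correct and takes essentially the same route as the paper: the paper likewise verifies T-class membership via Theorem 1.2 of \cite{JL1} and Benoist's result quoted in Proposition 2.2 of \cite{JL}, takes strong local Torelli from Examples \ref{Hypersurfaces}, \ref{cubic} and \ref{arrangements} of Section \ref{local Torelli}, and then applies Theorem \ref{Global Torelli theorem}. One small caution: cubic surfaces and cubic threefolds do not satisfy $d\mid(n+2)$, so they are not literally instances of case (i); what matters for your T-class verification is only that they are smooth hypersurfaces, which is how you in fact use it.
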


We remark that case (i) of Corollary \ref{case 3} is new, except the work of Voisin in \cite{Voisin99} which proves the generic Torelli theorem for the moduli space of quintic threefolds.

Case (ii) of Corollary \ref{case 3} with $n=1$ can be considered as a version on the Torelli space of the main result of \cite{DM}, which is the famous Deligne-Mostow theory. Corollary \ref{case 3} in case (ii) with $n>1$ is new.

As mentioned in the introduction, in \cite{ACT02} and \cite{ACT11}, Allcock, Carlson and Toledo proved global Torelli theorems on the moduli space and Torelli space of smooth cubic surfaces or cubic threefolds. Case (iii) of Corollary \ref{case 3} can be considered as a version of their results in \cite{ACT02} and \cite{ACT11} on the Torelli spaces.

Applying Proposition \ref{im-injective}, we have that $\T^H$, which is biholomorphic to $A\cap D$, is actually the Hodge metric completion of $\T'$ with respect to the induced Hodge metric. Moreover we have the following theorem.

\begin{thm}\label{pseudoconvex}
Suppose that the polarized manifold $(X,L)$ belongs to T-class and strong local Torelli holds for $(X,L)$.
Then the Hodge metric completion $\T^{H}\simeq A\cap D$ of $\T'$ is a bounded pseudoconvex domain in $A\simeq \C^{N}$. In particular, there exists a unique complete K\"ahler-Einstein metric on $\T^{H}\simeq A\cap D$ with Ricci curvature $-1$.
\end{thm}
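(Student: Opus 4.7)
The identification $\T^H \simeq A\cap D$ is already furnished by Theorem \ref{iso-TmH}, so the statement reduces to three parts: boundedness of $A\cap D$ in $A\simeq \C^N$, pseudoconvexity, and existence of the K\"ahler--Einstein metric. Boundedness is immediate: by Corollary \ref{extended-boundedness}, $\Phi^H(\T^H)\subset N_+\cap D$ is bounded in the Euclidean norm pulled back from $\mathfrak{n}_+$ via $\exp$; the projection $P=\exp\circ p\circ\exp^{-1}$ is induced by the orthogonal complex-linear projection $p:\mathfrak{n}_+\to \mathfrak{a}$, which is norm non-increasing, so $A\cap D=\Psi^H(\T^H)=P(\Phi^H(\T^H))$ is bounded in $A$.

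Pseudoconvexity is the main content. Via the biholomorphism $\Psi^H$, the complete Hodge metric on $\T^H$ transfers to a complete K\"ahler metric $\omega_H$ on $A\cap D$. I plan to produce a smooth strictly plurisubharmonic exhaustion function on $A\cap D$, which by Oka's criterion forces pseudoconvexity. The key structural input is that the holomorphic tangent space of $A\cap D$ at every point is $\mathfrak{a}\subset \mathfrak{g}^{-1,1}$ and therefore lies entirely in the horizontal subbundle $\mathrm{T}^{1,0}_hD$. By Griffiths--Schmid \cite{GS}, the Hodge metric has holomorphic sectional curvature bounded above by a strictly negative constant along horizontal directions. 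Combined with simple-connectedness of $A\cap D$ (biholomorphic to the universal cover $\T^H$ of $\ZZ$), this makes $(A\cap D,\omega_H)$ a simply connected complete K\"ahler manifold of negative holomorphic sectional curvature, i.e., a Cartan--Hadamard-type space. In this setting the squared Hodge distance $\rho(z)=d_H(\Psi^H(p),z)^2$ from a fixed base point is smooth (no cut locus), strictly plurisubharmonic by a complex Hessian comparison using the curvature pinching, and proper (since completeness of $\omega_H$ on the bounded set $A\cap D$ forces Hodge distance to blow up at $\partial(A\cap D)$). This $\rho$ is the desired plurisubharmonic exhaustion.

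With $A\cap D$ identified as a bounded pseudoconvex domain in $\C^N$, the final statement about the K\"ahler--Einstein metric follows directly from the theorem of Mok--Yau, which constructs a unique complete K\"ahler--Einstein metric of Ricci curvature $-1$ on every bounded pseudoconvex domain in $\C^N$.

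The main obstacle is the plurisubharmonicity of $\rho$: while horizontal holomorphic sectional curvature negativity is classical from \cite{GS}, upgrading it to a global plurisubharmonicity statement for $d_H^2$ requires a careful complex-Hessian estimate rather than the merely convex real-Hessian bound standard on Cartan--Hadamard manifolds. If this route encounters difficulties, a fallback is to use that negativity of the Ricci form of $\omega_H$ (again from \cite{GS}) makes $\log\det(g_H)$ plurisubharmonic, and that completeness of $\omega_H$ on the bounded domain $A\cap D$ forces $\det(g_H)\to\infty$ at the boundary, so $\log\det(g_H)$ itself serves as the PSH exhaustion.
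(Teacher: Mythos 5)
Your reduction into three parts and the final Mok--Yau step are fine, and your boundedness argument (boundedness of $\Phi^{H}(\T^{H})$ in $N_+$ from Corollary \ref{extended-boundedness}, followed by the norm-nonincreasing projection $p:\mathfrak{n}_+\to\mathfrak{a}$, using $A\cap D=\Psi^{H}(\T^{H})$ from Corollary \ref{cover-TmH}) is an acceptable variant of the paper's citation of Lemma 3.1 of \cite{LS}. The genuine gap is in the pseudoconvexity step, which is the heart of the theorem. Negativity of the holomorphic sectional curvature of the Hodge metric along horizontal directions does not make $(A\cap D,\omega_H)$ a Cartan--Hadamard space: that would require nonpositive \emph{Riemannian} sectional curvature, which neither Griffiths--Schmid nor the Gauss equation for the submanifold $A\cap D\subset D$ provides (the ambient Hodge metric on $D$ has positively curved directions, and the second fundamental form terms have no sign for general real $2$-planes). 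Consequently you have neither smoothness of the distance-squared function $\rho$ (absence of cut locus) nor the complex Hessian comparison needed for its strict plurisubharmonicity; as you yourself note, this is precisely the unproved step, so the main route does not close. The fallback is also not sound as stated: negativity of the Ricci curvature of the \emph{induced} metric on $A\cap D$ is not in \cite{GS} (they control horizontal holomorphic sectional curvatures, while Ricci involves sums of bisectional curvatures of the submanifold), and the implication that completeness of $\omega_H$ on a bounded domain forces $\det(g_H)\to\infty$ at the boundary is false in general --- completeness only forces blow-up of the metric in directions transverse to the boundary, not of its determinant.

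The missing idea is much simpler and is what the paper uses: by Griffiths--Schmid (Proposition \ref{pc of D}, i.e. Theorem 8.1 of \cite{GS}) there is a $C^{\infty}$ exhaustion function $f$ on the whole period domain $D$ whose Levi form is positive definite on the horizontal subbundle $\mathrm{T}^{1,0}_hD$. Since by Griffiths transversality every tangent space of $A\cap D$ lies in $\mathrm{T}^{1,0}_hD$ (as $\mathfrak{a}\subset\mathfrak{g}^{-1,1}$), the restriction $g=f|_{A\cap D}$ is strictly plurisubharmonic on $A\cap D$; and because the sublevel sets $\{f\le c\}$ are compact in $D$ while $A\cap D$ is complete in the Hodge metric, hence closed in $D$, the sets $\{g\le c\}$ are compact in $A\cap D$, so $g$ is a plurisubharmonic exhaustion. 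This yields pseudoconvexity directly, after which Mok--Yau applies exactly as you say.
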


First we recall some notions from complex analysis in several variables. One can refer to \cite{Hom} for details.

An open and connected subset $\Omega \subseteq \C^{N}$ is called a domain in $\C^{N}$.
A function $u$ defined on a domain $\Omega \subseteq \C^{N}$ with values in $[-\infty,+\infty]$ is called plurisubharmonic if
\begin{itemize}
\item[(1)] $u$ is semicontinous from above;
\item[(2)] For any $z,w \in \C^{N}$, the function $t \mapsto u(z+tw)$ is subharmonic at the points of $\C$ where it is defined.
\end{itemize}

A domain $\Omega \subseteq \C^{N}$ is called pseudoconvex if there exists a continous plurisubharmonic function $u$ on $\Omega$ such that for any $c\in \mathbb{R}$, the set
$$\Omega_{c}=\{z:\,z\in \Omega, u(z)<c \}$$
is a relatively compact subset of $\Omega$. Such function $u$ is called an exhaustion function on $\Omega$.

From Theorem 2.6.2 in \cite{Hom}, we know that a $C^2$ function $u$ on a domain $\Omega \subseteq\C^N$ is plurisubharmornic if and only if its Levi form is positive definite at any point in $\Omega$.

In their paper \cite{GS}, Griffiths and Schmid proved the following proposition.

\begin{proposition}\label{pc of D}
On every manifold $D$, which is dual to a K\"ahler C-space, there exists a $C^{\infty}$ exhaustion function $f$, whose Levi form, restricted to $T^{1, 0}_h(D)$, is positive definite at every point of $D$.
\end{proposition}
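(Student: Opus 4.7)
The plan is to construct an explicit $G_{\mathbb{R}}$-equivariant exhaustion function on $D$ using the polarization-induced Hodge form and to verify positivity of its Levi form on the horizontal distribution by reducing via homogeneity to a single base-point computation where the Hodge--Riemann relations kick in.

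First I would construct the function. Fix a base point $o \in D$ with Hodge decomposition $H_{\mathbb{C}} = \bigoplus H^{p,q}_{o}$, Weil operator $C_{o}$, and associated positive definite Hermitian form $h_{o}(u,v) = Q(C_{o}u, \bar v)$. For any $F \in D$, let $h_{F}$ be the analogous positive Hermitian form. Since $h_{o}$ and $h_{F}$ are both positive definite, define the positive self-adjoint operator $A_{F}$ on $(H_{\mathbb{C}}, h_{o})$ by $h_{F}(u,v) = h_{o}(A_{F}u, v)$, and set
$$f(F) \;=\; \operatorname{tr}_{h_{o}}(A_{F}) + \operatorname{tr}_{h_{o}}(A_{F}^{-1}) - 2\dim_{\mathbb{C}} H_{\mathbb{C}}.$$
Smoothness of $f$ is clear since $h_{F}$ depends smoothly on $F \in \check D$, and $f \geq 0$ with equality exactly at $F = o$.

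Next I would verify the exhaustion property. The compact dual $\check D$ is a projective manifold containing $D$ as the open locus where $h_{F}$ is positive definite, and the boundary $\partial D = \overline{D} \setminus D \subset \check D$ is precisely the locus where this positivity degenerates. As $F \in D$ approaches $\partial D$, at least one eigenvalue $\lambda_{i}(F)$ of $A_{F}$ must tend to $0$ or to $+\infty$. Since $\lambda + \lambda^{-1} \to +\infty$ at both ends, $f(F)\to +\infty$, so the sublevel sets $\{f \le c\}$ are relatively compact in $D$.

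Then I would compute the Levi form and prove positivity on horizontal directions. By $G_{\mathbb{R}}$-homogeneity of both $D$ and the horizontal distribution $T^{1,0}_{h}D$, it suffices to verify positivity at the chosen $o$. Use the $N_{+}$-chart $\exp\colon\mathfrak n_{+} \hookrightarrow \check D$ around $o$ and write $F = \exp(Z)\cdot o$ with $Z = \sum_{k\ge 1} Z_{-k,k}$, $Z_{-k,k}\in\mathfrak g^{-k,k}$. A Taylor expansion of $A_{F}=I+(Z+Z^{\ast})+\tfrac12(Z+Z^{\ast})^{2}+\cdots$, where the adjoint $Z^{\ast}$ is taken with respect to $h_{o}$, gives
$$f(\exp(Z)\cdot o) \;=\; \sum_{k\ge 1} c_{k}\,\|Z_{-k,k}\|^{2}_{h_{o}} \;+\; O(|Z|^{3}),$$
where the constants $c_{k}$ are determined by the action of the Weil operator $C_{o}$ on $\mathfrak g^{-k,k}$. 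The Hodge--Riemann relation \eqref{HR2} combined with the fact that horizontal tangent vectors lie in $\mathfrak g^{-1,1}$ gives $c_{1} > 0$, so the Levi form of $f$ at $o$ is positive definite on $\mathfrak g^{-1,1} \cong T^{1,0}_{h,o}D$. Translating by $G_{\mathbb{R}}$ propagates positivity to every point of $D$.

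The main obstacle is Step 3. The Levi form of $f$ on the full holomorphic tangent bundle $T^{1,0}D$ cannot be positive (otherwise $D$ would be Stein, which is false in general), so one must track the sign of each $c_{k}$ individually and exploit the specific parity $k=1$ where the Hodge--Riemann signature aligns favorably. The most delicate part is verifying that cross-terms between different $\mathfrak g^{-k,k}$-components of $Z$ do not contribute negatively when $Z$ is restricted to $\mathfrak g^{-1,1}$, which requires carefully expanding $\operatorname{tr}(A_{F})+\operatorname{tr}(A_{F}^{-1})$ to second order and using that $A_{F}^{-1}$ contributes a cancelling odd-order term in $Z+Z^{\ast}$ while doubling the second-order part in the horizontal component.
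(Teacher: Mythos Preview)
The paper does not prove this proposition; it simply quotes it from Griffiths--Schmid \cite{GS}. So there is no ``paper's own proof'' to compare against, and you are attempting an independent argument.

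Your Step~3 contains a genuine gap. You claim that ``by $G_{\mathbb{R}}$-homogeneity \dots\ it suffices to verify positivity at the chosen $o$'' and then ``translating by $G_{\mathbb{R}}$ propagates positivity to every point of $D$.'' But this reasoning requires $f$ to be $G_{\mathbb{R}}$-invariant, and it is not: your $f$ depends on the fixed reference point $o$, and a $G_{\mathbb{R}}$-invariant function on the homogeneous space $D$ would be constant, hence could never be an exhaustion. Concretely, if $g\in G_{\mathbb{R}}$ carries $o$ to $F$, then the Levi form of $f$ at $F$ equals the Levi form of $f\circ g$ at $o$, but $f\circ g$ is the analogous function built from the \emph{different} base point $g^{-1}\cdot o$, not $f$ itself. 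So your second-order Taylor computation at $o$ tells you nothing about the Levi form of $f$ at other points. To salvage the argument you would have to compute the Levi form of $f$ directly at an arbitrary $F\in D$, which is exactly the non-trivial work that Griffiths--Schmid carry out (via curvature computations for the Hodge bundles); the homogeneity shortcut is illusory here.

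A secondary issue: in your opening sentence you call the function ``$G_{\mathbb{R}}$-equivariant,'' which for a scalar function means invariant, contradicting the exhaustion property you establish in Step~2. This is the same inconsistency manifesting earlier.
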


\begin{proof}[Proof of Theorem \ref{pseudoconvex}]
Boundedness of $A\cap D$ comes from the proof of Lemma 3.1 in \cite{LS}.

Let $f$ be the $C^{\infty}$ function on the period domain $D$ as in Proposition \ref{pc of D}.
Then the restricted function $g=f|_{A\cap D}$ has positive definite Levi form at any point of $A\cap D$, since the tangent space at any point of $A\cap D$ lies in the fiber of $T^{1, 0}_h(D)$.

To complete the proof, we only need to show that $g$ is an exhaustion function on $A\cap D$.
In fact, for any $c\in \mathbb{R}$, the set
$$\Omega_{c}=\{z\in A\cap D:\, g(z)\le c\}=\{z\in D:\, f(z)\le c\}\cap (A\cap D)$$
is compact in $A\cap D$, since $$\{z\in D:\, f(z)\le c\}$$ is compact in $D$, see the proof of Theorem 8.1 in \cite{GS}, and  $A\cap D$ with the induced Hodge metric is
complete, so is a complete subset of $D$, and hence closed in $D$.

The existence and uniqueness of the complete K\"ahler-Einstein metric on $\T^{H}\simeq A\cap D$ with Ricci curvature $-1$ follows directly from the results of \cite{MokYau}.
\end{proof}



\section{Applications}\label{general}
In this section, we still consider polarized manifolds in the T-class for which the strong local Torelli holds.
We will use the results in Section \ref{global Torelli} to prove that, in this case, the global Torelli theorem holds on the moduli space with level $m$ structure.

Suppose that the polarized manifold $(X,L)$ belongs to T-class and strong local Torelli holds for $(X,L)$. Let $\Z$ containing $(X,L)$
be defined in Definition \ref{T-class} and $\ZZ$, $\T$, $\T'$  and $\TT$ be defined as before.

Let $\Gamma=\rho(\pi_1(\ZZ))$ denote the global monodromy group,  where $$\rho:\, \pi_1(\ZZ) \to \text{Aut}(H_{\mathbb Z}, Q)$$ denotes the monodromy representation. Then the
corresponding period maps can be written in the following commutative diagram,
\[\xymatrix{\mathcal{T}^{H}\ar[rd]^{\tilde{\Phi}^{H}}\ar[d]^{\pi_{m}^{H}}\ar[r]^-{\Phi^{H}}&D\ar[d]^{\pi_D}\\ \ZZ\ar[r]^{\Phi_{\ZZ}}&D/\Gamma,}\]
where $$\tilde{\Phi}^{H}=\pi_D\circ\Phi^{H}.$$

The image of the extended period map $\Phi_{\ZZ}$ in general is an analytic subvariety of $D/\Gamma$.  We refer the reader to page 156 of \cite{Griffiths3} for details of the
analyticity of the image of the period map.

\begin{theorem}\label{generic}
Suppose that the polarized manifold $(X,L)$ belongs to T-class and strong local Torelli holds for $(X,L)$.
Then the extended period map $\Phi_{\ZZ}: \,\ZZ\rightarrow D/\Gamma$ is injective. As a consequence the global Torelli theorem holds on the moduli space $\Z$ with level $m$ structure.
\end{theorem}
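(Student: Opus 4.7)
The plan is to deduce injectivity of $\Phi_{\ZZ}$ from the already-established injectivity of the lifted extended period map $\Phi^{H}: \T^{H}\to D$ (Corollary \ref{injectivity of P^H}), using the equivariance of $\Phi^{H}$ under the deck transformations of the universal covering $\pi_{m}^{H}: \T^{H}\to \ZZ$ on one side and the monodromy action of $\Gamma$ on $D$ on the other. So the whole theorem, including the statement about $\Z$, will reduce to a standard equivariant descent argument once the main work of the paper (the bounded affine realization of $\T^{H}$ and hence the embedding $\Phi^{H}$) is in place.

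First I would record the equivariance. Since $\T^{H}$ is the universal cover of $\ZZ$, its deck transformation group is $\pi_{1}(\ZZ)$, and the commutative square $\pi_{D}\circ \Phi^{H}=\Phi_{\ZZ}\circ \pi_{m}^{H}$ from the diagram preceding the theorem forces
$$\Phi^{H}(\sigma\cdot q)=\rho(\sigma)\cdot \Phi^{H}(q)\quad \text{for all } \sigma\in \pi_{1}(\ZZ),\, q\in \T^{H},$$
with $\rho(\pi_{1}(\ZZ))=\Gamma$ by the very definition of $\Gamma$ in this section. The proof of this equivariance is the routine lift-uniqueness argument for maps to covering spaces.

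Second I would run the diagram chase. Given $p_{1},p_{2}\in \ZZ$ with $\Phi_{\ZZ}(p_{1})=\Phi_{\ZZ}(p_{2})$, pick lifts $\tilde p_{i}\in (\pi_{m}^{H})^{-1}(p_{i})$. Commutativity gives $\pi_{D}(\Phi^{H}(\tilde p_{1}))=\pi_{D}(\Phi^{H}(\tilde p_{2}))$, so there exists $\gamma\in \Gamma$ with $\gamma\cdot \Phi^{H}(\tilde p_{1})=\Phi^{H}(\tilde p_{2})$. Writing $\gamma=\rho(\sigma)$ and inserting the equivariance above yields $\Phi^{H}(\sigma\cdot \tilde p_{1})=\Phi^{H}(\tilde p_{2})$. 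Corollary \ref{injectivity of P^H} then forces $\sigma\cdot \tilde p_{1}=\tilde p_{2}$, and projecting via $\pi_{m}^{H}$ gives $p_{1}=p_{2}$. The consequence for $\Z$ is then immediate: by Propositions \ref{openness1}--\ref{openness2}, $\Z$ is a Zariski open subset of $\ZZ$ and $\Phi_{\Z}=\Phi_{\ZZ}|_{\Z}$, so the injectivity just proved restricts to injectivity of $\Phi_{\Z}$, which is exactly the global Torelli theorem on $\Z$.

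I do not expect any serious obstacle here, because all the hard work is absorbed into Corollary \ref{injectivity of P^H}; the only point I would want to verify carefully is that $\Gamma$ is indeed the image of $\pi_{1}(\ZZ)$ (rather than the a priori larger $\pi_{1}(\Z)$) under $\rho$. This is built into the hypothesis $\Gamma=\rho(\pi_{1}(\ZZ))$ stated at the beginning of the section, and it is compatible with the standard fact, together with the codimension $\ge 1$ claim of Proposition \ref{openness1}, that the inclusion $\Z\hookrightarrow \ZZ$ induces a surjection on fundamental groups through which the monodromy representation descends. Once that compatibility is in hand, the equivariant descent is purely formal and completes the proof.
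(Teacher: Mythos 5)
Your argument is correct, and it is not the route the paper takes for its primary proof. The paper's first proof establishes more: it shows (Lemmas \ref{lemma smoothness} and \ref{P^H covering}) that $\Phi_{\ZZ}$ is a covering map onto its smooth image, using the Griffiths--Wolf Lemma \ref{covering-lemma} and completeness of the Hodge metric on $\ZZ$, and then kills the covering by a fundamental group comparison: the image is $\Phi^{H}(\T^{H})/\Gamma$ with $\pi_{1}\simeq\Gamma\simeq\pi_{1}(\ZZ)/\mathrm{Ker}(\rho)$, which forces $\mathrm{Ker}(\rho)=0$ and $\pi_{1}(\ZZ)\simeq\Gamma$; that buys extra information (the extended period map is a biholomorphism onto a smooth image, and the monodromy representation is faithful on $\pi_{1}(\ZZ)$) which your set-theoretic descent does not produce. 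The paper's second proof is the one closest to yours: it descends injectivity from the Torelli space (Theorem \ref{Global Torelli theorem}, via Proposition \ref{im-injective}) by explicitly replacing the marking $\gamma_{1}$ by $\gamma_{1}\gamma$ and using the action of $\pi_{1}(\ZZ)$ on the universal cover; your abstract statement $\Phi^{H}(\sigma\cdot q)=\rho(\sigma)\cdot\Phi^{H}(q)$ is exactly what that marking twist realizes geometrically, and your version has the small advantage of working directly with all points of $\ZZ$ rather than only points of $\Z$.

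Two points you should make explicit rather than call routine. First, the equivariance of the extended map: it holds because $\Phi$ is $\rho$-equivariant on $\T$, the deck action of $\pi_{1}(\ZZ)$ preserves the dense open set $\T_{m}=(\pi_{m}^{H})^{-1}(\Z)$ on which $\Phi^{H}$ restricts to $\Phi_{m}$, and two lifts of $\Phi_{\ZZ}\circ\pi_{m}^{H}$ through the covering $\pi_{D}$ (a genuine covering since $\Gamma$ is torsion-free by Lemma \ref{trivial monodromy}) agreeing on $\T_{m}$ agree everywhere; the basepoint ambiguity only conjugates the resulting homomorphism inside $\Gamma$, which is harmless since you only need its image to be all of $\Gamma$. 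Second, the descent of $\rho$ to $\pi_{1}(\ZZ)$ is not a consequence of the codimension statement and the surjection $\pi_{1}(\Z)\twoheadrightarrow\pi_{1}(\ZZ)$ alone: you need that the kernel of this surjection is normally generated by loops whose Picard--Lefschetz transformations are trivial, which is precisely the content of Proposition \ref{openness1}. With those two points spelled out, your equivariant descent is a complete and clean proof of the theorem.
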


\begin{proof}[Proof of Theorem \ref{generic}]
We will give two proofs. For the first proof, we first show that $\Phi_{\ZZ}$ is a covering map from $\ZZ$ to its image in $D/\Gamma$. This follows from the following lemma.

\begin{lemma}\label{lemma smoothness}
Let $\tilde{\Phi}^{H}:\,{\mathcal{T}^{H}}\to D/\Gamma$ be the composition of
$\Phi^{H}$ and the covering map $\pi_D:\,D\to D/\Gamma$. Then
$\tilde{\Phi}^H$ is a covering map onto its image  which is $\Phi_{\ZZ}(\ZZ)$.
\end{lemma}

\begin{proof}[Proof of Lemma \ref{lemma smoothness}]
First from Theorem (D.2) in page 179 of \cite{Griffiths3}, $\Phi^H(\T^H)$ is invariant under the action of $\Gamma$. In our case, $\Phi^H$ is a global embedding which implies that $\Phi^H(\T^H)$ is smooth, and $\pi_D$ is a covering map.

On the other hand $D/\Gamma$ is smooth, because $\Gamma$ is torsion-free from Lemma \ref{trivial monodromy}.  As discussed in page 156 of \cite{Griffiths3}, the isotropy group corresponding to the points in the smooth manifold $\Phi^H(\T^H)$ that are mapped to a singular point in  $\Phi_{\ZZ}(\ZZ)$ by the quotient of $\Gamma$ must be a finite torsion subgroup of $\Gamma$. Therefore $$\Phi_{\ZZ}(\ZZ)=\pi_D\circ \Phi^H(\T^H)$$ is actually a smooth variety, since $\Gamma$ is torsion-free. This gives that $\T^H$, which is biholomorphic to $\Phi^H(\T^H)$, is also a covering space of $\Phi_{\ZZ}(\ZZ)$, and $\tilde{\Phi}^H$ is a covering map.
\end{proof}

\begin{lemma}\label{P^H covering}
The extended period map $\Phi_{\ZZ}: \,\ZZ\rightarrow D/\Gamma$ is a covering map from $\ZZ$ onto its image in $D/\Gamma$.
\end{lemma}
\begin{proof}[Proof of Lemma \ref{P^H covering}]
Note that in the following commutative diagram
\[\xymatrix{\mathcal{T}^{H}\ar[rd]^{\tilde{\Phi}^{H}}\ar[d]^{\pi_{m}^{H}}\ar[r]^-{\Phi^{H}}&D\ar[d]^{\pi_D}\\ \ZZ\ar[r]^{\Phi_{\ZZ}}&D/\Gamma,}\]
all the varieties involved are smooth. Since the map $\pi_{m}^{H}$ is a covering map, $\pi_{m}^{H}$ is locally biholomorphic.
Similarly, $\Phi^H$ is an embedding and $\pi_D$ is a covering map.

From Lemma \ref{lemma smoothness}, we know that the map
$$\tilde{\Phi}^H = \pi_D\circ \Phi^H= \Phi_{\ZZ}\circ \pi_{m}^H:\, \T^H \to  \Phi_{\ZZ}(\ZZ)$$ is a covering map, therefore a locally biholomorphic map, and hence the Jacobians of these maps are all nondegenerate.  In particular this implies that the Jacobian of $\Phi_{\ZZ}$ is nondegenerate, which implies that $$\Phi_{\ZZ}:\, \ZZ \to \Phi_{\ZZ}(\ZZ)$$ is a locally biholomorphic map.

On  the smooth complex submanifold $$\Phi_{\ZZ}(\ZZ)=\pi_D\circ \Phi^H(\T^H)$$ of $D/\Gamma$, we have the induced Hodge metric from $D/\Gamma$. Since the image of $\ZZ$ under $\Phi_{\ZZ}$ is precisely
$\Phi_{\ZZ}(\ZZ)$ and the map $\Phi_{\ZZ}$ is nondegenerate, the pull-back Hodge metric on $\ZZ$ by $\Phi_{\ZZ}$ from the submanifold $\Phi_{\ZZ}(\ZZ)$ in $D/\Gamma$ is the same as the original Hodge metric induced from $D/\Gamma$ through the pull-back by $\Phi_{\ZZ}:\, \ZZ \to D/\Gamma$.

Since the induced Hodge metric on $\ZZ$ is complete as the Hodge metric completion of $\Z$, we can directly apply Lemma \ref{covering-lemma} of Griffiths--Wolf to deduce that  $$\Phi_{\ZZ}:\,\ZZ\to \Phi_{\ZZ}(\ZZ)$$ is a covering map.
\end{proof}


\noindent\textit{Proof of Theorem \ref{generic}(continued).}
In Lemma \ref{P^H covering} we have already proved that $\Phi_{\ZZ}(\ZZ)$ is smooth and $$\Phi_{\ZZ}:\,\ZZ\to \Phi_{\ZZ}(\ZZ)$$ is a covering map.
This implies that $\pi_1(\ZZ)$ can be identified as a subgroup of the fundamental group of $\Phi_{\ZZ}(\ZZ)$.

On the other hand,  since $\T^H$ is simply connected and $\Phi^H:\, \T^H \to D$ is an embedding, Lemma \ref{lemma smoothness} gives that
$$ \Phi_{\ZZ}(\ZZ)= \Phi^H(\T^H)/\Gamma$$ which implies that the fundamental group of  $\Phi_{\ZZ}(\ZZ)$ is $\Gamma$, where
$$\Gamma= \rho(\pi_1(\ZZ))\simeq \pi_1(\ZZ)/\text{Ker}(\rho)$$ with $\rho:\, \pi_1(\ZZ)\to \text{Aut}(H_{\mathbb Z}, Q)$  the monodromy representation. From this we deduce that $$\pi_1(\ZZ)\subset \Gamma\simeq \pi_1(\ZZ)/\text{Ker}(\rho),$$
which implies that $\text{Ker}(\rho)=0$ and $\pi_1(\ZZ)\simeq \Gamma$.

Therefore we have proved that $\pi_1(\ZZ)$ is isomorphic to the fundamental group of $\Phi_{\ZZ}(\ZZ)$.
From this we deduce that the covering map
$$\Phi_{\ZZ}: \ZZ \to D/\Gamma$$
is a biholomorphic map onto its image.
\end{proof}

The proof of Theorem \ref{generic} uses Lemma \ref{lemma smoothness} and Lemma \ref{P^H covering}, which is proved under the condition of strong local Torelli.  In fact, we can
also prove Theorem \ref{generic}, once the conclusion of Theorem \ref{Global Torelli theorem} holds.

\begin{proof}[Second proof of Theorem \ref{generic}]
Let $p_{1}$ and $p_{2}$ be two points in $\Z$ such that $\Phi_{\Z}(p_{1})=\Phi_{\Z}(p_{2})$ in $D/\Gamma$. Let $$[X_{i},L_{i},[\gamma_{i}]_{m}], \, i=1,2$$ be the fibers over $p_{1}$ and $p_{2}$ of the analytic family $f_{m} :\, \U_{m}\to \Z$ respectively.  From Proposition \ref{im-injective} we know that $$\pi^0:\, \T'\to \T^{H}$$ identifies $\T'$ to the Zariski open submanifold $$\T_0=i_\T(\T)\simeq i_m(\T)\subseteq \TT$$ of $\TT$, and that $\T_0$ is a cover of $\Z$ by Lemma \ref{openness3}. In the following discussion we will use freely the identification $$\T_0\simeq \T'.$$

There exist two points ${q}_{1}$ and ${q}_{2}$ in $\T'$ over which are the fibers $$[X_{i},L_{i},\gamma_{i}], \, i=1,2$$ of the analytic family $g':\, \U'\to \T'$ respectively,
such that
$$\pi_{m}'({q}_{i})=p_{i}, \, i=1,2$$
under the covering map $\pi_{m}':\, \T' \to \Z$.

The condition $\Phi_{\Z}(p_{1})=\Phi_{\Z}(p_{2})$ implies that there exists $\gamma\in \Gamma$ such that
$$\Phi'({q}_{1})=\gamma\Phi'({q}_{2}).$$
Let ${q}_{1}'\in \T'$ correspond to the triple $[X_{1},L_{1},\gamma_{1}\gamma]$. Then by the definition of the period map $\Phi'$ in \eqref{defn of P'}, we have
\begin{eqnarray*}
\Phi'(q_{1}')&=&(\gamma_{1}\gamma)^{-1}(F^n(X_{1})\subseteq\cdots\subseteq F^0(X_{1}))\\
             &=&\gamma^{-1}\gamma_{1}^{-1}(F^n(X_{1})\subseteq\cdots\subseteq F^0(X_{1}))\\
             &=&\gamma^{-1}\Phi'(q_{1})\\
             &=&\Phi'(q_{2}).
\end{eqnarray*}
By Theorem \ref{Global Torelli theorem}, we see that $q_{1}'=q_{2}$ in $\T'$.

Now we have the inclusion $\pi^{0}:\, \T' \to \T^{H}$ and the inclusion $i:\, \Z \to \ZZ$, therefore we can view the points $p_{1}$ and $p_{2}$ as points in $\ZZ$ and the points $q_{1}$, $q'_{1}$ and $q_{2}$ as points in $\T^{H}$.

Since $\gamma$ lies in the image of the monodromy representation $\rho:\, \pi_{1}(\ZZ)\to \Gamma$, there exists a $\tilde{\gamma}\in \pi_{1}(\ZZ)$ such that
$$\rho(\tilde{\gamma})=\gamma \text{ and } q_{1}'=q_{1}\tilde{\gamma},$$ where $q_{1}\tilde{\gamma}$ is defined by the action of $\pi_{1}(\ZZ)$ on the universal cover $\T^{H}$, and the action of $\tilde{\gamma}$ on the fiber $[X_{1},L_{1},\gamma_{1}]$ over $q_{1}$ is defined by
$$[X_{1},L_{1},\gamma_{1}]\tilde{\gamma}= [X_{1},L_{1},\gamma_{1}\gamma].$$
So we have  $$p_{1}=\pi_{m}^{H}(q_{1})=\pi_{m}^{H}(q_{1}')=\pi_{m}^{H}(q_{2})=p_{2},$$
which proves the injectivity of $\Phi_{\Z}$.
\end{proof}

\vspace{+12 pt}

\noindent Center of Mathematical Sciences, Zhejiang University, Hangzhou, Zhejiang 310027, China;\\
Department of Mathematics, University of California at Los Angeles, Los Angeles, CA 90095-1555, USA\\
\noindent e-mail: liu@math.ucla.edu, kefeng@cms.zju.edu.cn

\vspace{+6pt}
\noindent Center of Mathematical Sciences, Zhejiang University, Hangzhou, Zhejiang 310027, China \\
\noindent e-mail: syliuguang2007@163.com

\end{document}